\documentclass[english,reqno,11pt]{amsart}
\usepackage{amsmath, amsthm, amsfonts}
\setcounter{tocdepth}{3}
\usepackage{hyperref} 
\usepackage{hyperref}
\hypersetup{
	colorlinks=true,
		citecolor=blue!60!black,
		linkcolor=red!60!black,
		urlcolor=green!40!black,
		filecolor=yellow!50!black,
	breaklinks=true,
	pdfpagemode=UseNone,
	bookmarksopen=false,
}

\usepackage[many]{tcolorbox}

\usepackage{amsmath,amsthm,amssymb}
\usepackage{amscd,indentfirst,epsfig}
\usepackage{latexsym}
\usepackage{times}
\usepackage{enumerate}
\usepackage{mathrsfs}
\usepackage{stmaryrd}
\usepackage{amsopn}
\usepackage{amsmath}
\usepackage{amssymb,dsfont,mathtools}
\usepackage{amsfonts,bm}
\usepackage{amsbsy,amsmath}
\usepackage{amscd}
\usepackage{xcolor}
\linespread{1.1}

\usepackage[mono=false]{libertine}
\usepackage[T1]{fontenc}
\usepackage{amsthm}
\usepackage[normalem]{ulem} 
\usepackage[cal=euler, scr=boondoxo]{}
\usepackage{microtype}

\usepackage{numprint}

\hsize=126mm \vsize=180mm
\parindent=5mm
\setlength{\oddsidemargin}{.5cm} \setlength{\evensidemargin}{.5cm}
\setlength{\textwidth}{15.0cm} \setlength{\textheight}{21.2cm}

\makeatletter
\def \leq {\leqslant}
\def \le {\leq}
\def \geq {\geqslant}
\def\e{\varepsilon}
\def \ge {\geq}

\def\R{\mathbb R}
\def\S{\mathbb S}

\def\N{\mathbb N}

\def \H {\mathbb{H}}

\def \M {\mathcal{M}}

\def\g{\gamma}
\def \ds {\displaystyle}

\def \d {\mathrm{d}}
\def \lm {\bm{m}}
\def \lM {\mathds{M}}
\def \lD {\mathds{D}}

\def \Q {\mathcal{Q}}

\def\fet{f_{\ast}}

\def\vet{v_{\ast}}

\def \var {\dd}

\def\grad{\nabla}

\newtheorem{theo}{Theorem}[section]

\newtheorem{prop}[theo]{Proposition}
\newtheorem{cor}[theo]{Corollary}
\newtheorem{lem}[theo]{Lemma}

\newtheorem{defi}[theo]{Definition}
\newtheorem{rmq}[theo]{Remark}

\def \dd {\bm{\varepsilon}}

\newcommand{\vertiii}[1]{{\left\vert\kern-0.25ex\left\vert\kern-0.25ex\left\vert #1  
    \right\vert\kern-0.25ex\right\vert\kern-0.25ex\right\vert}}                      
\newcommand{\verti}[1]{{\left\vert\kern-0.25ex\left\vert\kern-0.25ex\left\vert #1    
    \right\vert\kern-0.25ex\right\vert\kern-0.25ex\right\vert}}	

\def \ind {\mathbf{1}}
\numberwithin{equation}{section}

\usepackage{natbib}
\bibliographystyle{unsrtnat}
\usepackage[many]{tcolorbox}

\title[Landau equation under Prodi-Serrin like criteria]{A priori estimates for solutions to Landau equation under Prodi-Serrin like criteria}
\author{R. Alonso}
\address{$^1$Texas A\&M University at Qatar, Division of Arts and Sciences, Education City, Doha, Qatar.}
\email{ricardo.alonso@qatar.tamu.edu}

\author{V. Bagland}
\address{Universit\'{e} Clermont Auvergne, LMBP, UMR 6620 - CNRS,  Campus des C\'ezeaux, 3, place Vasarely, TSA 60026, CS 60026, F-63178 Aubi\`ere Cedex,
France.}\email{Veronique.Bagland@uca.fr}

\author{L. Desvillettes}
\address{Universit\'e de Paris and Sorbonne Universit\'e, CNRS, IUF, IMJ-PRG, F-75006 Paris, France} \email{desvillettes@imj-prg.fr}

\author{B.  Lods}
\address{Universit\`{a} degli
Studi di Torino \& Collegio Carlo Alberto, Department of Economics, Social Sciences, Applied Mathematics and Statistics ``ESOMAS'', Corso Unione Sovietica, 218/bis, 10134 Torino, Italy.}\email{bertrand.lods@unito.it}

\begin{document}

 \maketitle

\begin{abstract}

In this paper, we introduce Prodi-Serrin like criteria which enable to provide a priori estimates for the solutions to the  spatially homogeneous  Landau equation for all classical soft potentials and dimensions $d \ge 3$. The physical case of Coulomb interaction in dimension $d=3$ is included in our analysis, which generalizes the work of \cite{silvestre}. Our approach is quantitative and does not require a preliminary  knowledge of elaborate tools for nonlinear parabolic equations.\end{abstract}

\tableofcontents

\section{Introduction}

\subsection{The spatially homogeneous Landau equation}

In this work, we are interested in the regularity properties of the solutions to the (spatially homogeneous) Landau equation:
\begin{subequations}\label{eq:Landau}
\begin{equation}\label{eq:Landau1}
\partial_{t} f(t,v) = \Q(f(t,\cdot))(v),  
\qquad t\geq0, \quad v \in \R^{d},
\end{equation}
supplemented with the initial condition
\begin{equation}\label{eq:Init}
f(t=0,v)=f_{\mathrm{\rm in}}(v), \qquad v \in \R^{d},\end{equation}
\end{subequations}
where $\Q$ here above denotes the (quadratic) Landau collision operator
\begin{equation}\label{op:Landau}
\Q(f)(v) := {\grad}_v \cdot \int_{\R^{d}} |v-\vet|^{\g+2} \, \Pi(v-\vet) 
\Big\{ \fet \grad f - f {\grad f}_\ast \Big\}
\, \d\vet\, ,
\end{equation}
with the usual shorthands $f:= f(v)$, $f_* := f(v_*)$,
and where $\Pi(z)$ is the orthogonal projector onto $z^{\bot}$: 
$$\Pi(z) :=\mathrm{Id}-\frac{z \otimes z}{|z|^{2}}.$$ 

In the  present contribution we obtain \emph{new results} for the Landau equation in the case of so-called very  soft potentials
$$-d \leq \gamma < -2, $$
including the physically relevant Coulomb case $d=3$, $\g = -3$, in which the Landau operator models collisions between charged particles, see  \cite{Lif}. We nevertheless point out that the results presented here still hold when $-2 \leq \g <0$ (so-called moderately soft potentials), and enable to get \emph{new proofs of already known results} 
(see for example 
\cite{Wu,DesvJFA} and \cite{ABL}). The case of Maxwell molecules ($\g=0$) and hard potentials ($\g \in (0,1]$) is quite different (see e.g. \cite{DeVi1}), and is not considered in this work. 
\par

 \subsection{Notations} For $k \in \R $ and $ p\geq 1$, we define the Lebesgue space {$L^{p}_{k}=L^{p}_{k}(\R^{d})$} through the norm
$$\displaystyle \|f\|_{L^p_{k}} := \left(\int_{\R^{d}} \big|f(v)\big|^p \, 
\langle v\rangle^{k} \, \d v\right)^{\frac{1}{p}}, \qquad L^{p}_{k}(\R^{d}) :=\Big\{f\::\R^{d} \to \R\;;\,\|f\|_{L^{p}_{k}} < \infty\Big\}\, ,$$
where $\langle v\rangle :=\sqrt{1+|v|^{2}}$, $v\in \R^{d}.$ For $k=0$, we simply denote $\|\cdot\|_{p}$ the $L^{p}$-norm. We also denote the homogeneous Sobolev space $\dot{\H}^{m}$ through the norm
$$\|f\|_{\dot{\H}^{m}}^{2}=\int_{\R^{d}}\left|\xi\right|^{2m}\left|\mathcal{F}[f](\xi)\right|^{2}\d \xi, \qquad m \in \N$$
and the weighted homogenous Sobolev space $\dot{\H}^{m}_{k}$ through the norm
$$\|f\|_{\dot{\H}^{m}_{k}}^{2}=\int_{\R^{d}}\left|\xi\right|^{2m}\left|\mathcal{F}[\langle \cdot \rangle^{\frac{k}{2}}f](\xi)\right|^{2}\d\xi, \qquad m \in \N, \quad k \in \R$$
where $\mathcal{F}[f]$ denotes the Fourier transform of $f$.  
Given $k \in \R$ and $f \in L^{1}_{k}(\R^{d})$, we also define the statistical moments as
$$
\lm_{k}(f)=\int_{\R^{d}}f(v)\langle v\rangle^{k}\d v\,.
$$
In relation to the coefficients of the Landau equation, we introduce, for $\gamma \in [-d,0)$,
\begin{equation*}
\left\{
\begin{array}{rcl}
a(z) & := & \left(a_{i,j}(z)\right)_{i,j} \quad \mbox{ with }
\quad a_{i,j}(z) 
= |z|^{\g+2} \,\left( \delta_{i,j} -\frac{z_i  z_j}{|z|^2} \right),\medskip\\
 b_i(z) & := & \sum_k \partial_k a_{i,k}(z) = -(d-1) \,z_i \, |z|^\g,  \medskip\\
 c(z) & := & -\sum_{k,l} \partial^2_{kl} a_{k,l}(z).
\end{array}\right.
\end{equation*}
Therefore,
$$c(z)=\begin{cases}(d-1) \,(\g+d) \, |z|^\g, \qquad &\text{ for } \g \in (-d,0),\\
(d-1)(d-2)|\S^{d-1}|\delta_{0}(z), \qquad &\text{ for } \g=-d.\end{cases}
$$
For any $f \in L^{1}_{2+\g}(\R^{d})$, we define then the matrix-valued mapping  $\mathcal{A}[f]$ given by
\begin{equation}\label{eq:Af}
\mathcal{A}[f] :=\big(\mathcal{A}_{ij}[f]\big)_{ij}:=\big(a_{ij}\ast f\big)_{ij} .\end{equation}
In the same way, we define $\bm{b}[f] \in \R^{d}$ and $\bm{c}_{\g}[f] \in \R$  by 
$$\bm{b}_{i}[f] := b_{i} \ast f , \qquad \qquad i=1,\ldots,d\,; \qquad  \bm{c}_{\g}[f]  := c \ast f\,.$$
We emphasize the dependency with respect to the parameter $\g$ in $\bm{c}_{\g}[f]$ since, in several places, we apply the same definition with $\g+1$ replacing $\g$. Notice that
\begin{equation}\label{eq:cgCoul}
\bm{c}_{\g}[f](v)=\begin{cases}(d-1)\,(\g+d)\ds\int_{\R^{d}}|v-\vet|^{\g}f(\vet)\d\vet \qquad &\text{ for } \g \in (-d,0),\\

c_{d}\,f(v)\qquad &\text{ for } \g=-d,\end{cases}
\end{equation}
where $c_{d}:=(d-1)(d-2)|\S^{d-1}|.$

\smallskip
\noindent
With these notations, the  Landau  equation can then be written alternatively under the form
\begin{equation}\label{LFD}
\left\{
\begin{array}{ccl}
\;\partial_{t} f &= &\grad \cdot \big(\,\mathcal{A}[f]\, \grad f
- \bm{b}[f]\, f\big)=\mathrm{Tr}\left(\mathcal{A}[f] D^{2}f\right) + \bm{c}_{\g}[f]f , \medskip\\
\;f(t=0)&=&f_{\mathrm{\rm in}}\, ,
\end{array}\right.
\end{equation}
where $D^{2}f$ is the Hessian matrix of $f$. {Notice that, with respect to previous existing works on the field, we adopt here a convention in which the term $\bm{c}_{\g}[f]$ in \eqref{LFD} is nonnegative. With such a convention, $\bm{c}_{\g}[f]=-\nabla \cdot \bm{b}[f].$}

\subsection{Solutions to the (spatially homogeneous) Landau equation} 

We discuss here some of the results existing in the literature for the Landau equation  in dimension $d=3$. 
\medskip

In the case of hard potentials ($\g \in (0,1]$), existence, uniqueness, appearance of smoothness and of moments is known \cite{DeVi1}. Exponential convergence towards equilibrium also holds, see \cite{ABDL2} and \cite{Desv_Braga}. The situation for Maxwell molecules ($\g=0$) and moderately soft potentials ($\g \in [-2, 0)$), is almost identical in terms of existence, uniqueness and appearance of smoothness. The main difference concerns the statistical moments, which are propagated (and grow at most linearly with time) 
 but are not created (see \cite{Wu, DesvJFA, DeVi1}). Moreover,  the speed of convergence towards equilibrium is
proved under suitable assumptions on the initial datum to be "stretched exponential"  (see Theorem 1.4 of \cite{kleber} for the case $-1 < \g <0$ or Section 6 of \cite{ABDL1} for the more general case of Landau-Fermi-Dirac equation, covering in particular the Landau case).  A systematic study of the Landau equation for moderately soft potentials, including in particular the ``critical case'' $\g=-2$ has been addressed recently with techniques partly similar to those of the present paper in \cite{ABL}. 
\medskip

The case of  very soft potentials ($\g \in [-3,-2)$), which includes the physically relevant case of Coulomb potential ($\g = -3$) is different. Up to the appearance of the very recent paper \cite{GS}, only weak solutions (including H-solutions) were known to exist  and uniqueness was an open problem as discussed in \cite{DesvJFA}. Notice that stretched exponential convergence to equilibrium still holds \cite{CDH}. 

\medskip

Focusing on Coulomb interactions $\g=-3$, the main \emph{a priori} estimate in Lebesgue spaces states that H-solutions $f$ satisfy
$$f \in  L^1([0,T]; L^{3}_{3\g}(\R^3)),$$
see \cite{DesvJFA}. We point out the very recent improvement in terms of moments for Coulomb potential $\g=-3$ which shows that $f \in L^{1}([0,T];L^{3}_{s}(\R^{3}))$ with $s \geq -5$, see \cite{Ji}. Such an estimate is deduced from a careful study of the entropy production of the Landau collision operator and can be interpolated with the energy estimate  stating that $f \in L^{\infty}([0,T]; L^1_{2}(\R^3))$. 
\medskip

As far as regularity of solutions is concerned, it is possible to get a bound on the Hausdorff dimension of the times in which the solution might be singular (see \cite{GGIV} for the Coulomb case and \cite{GIV} for general very soft potentials). Various perturbative results are also available in the literature: we refer to \cite{GGL} for a recent construction of close-to-equilibrium solutions in the case of Coulomb interactions while local in time solutions for large data and global in time solutions for data close to equilibrium have been recently obtained in \cite{DesvHe}.  
\medskip

Finally, in \cite[Theorem 3.8]{silvestre}, conditional results show that if the solution lies in $L^{\infty}([0,T]; L^{p}_{\kappa}(\R^3))$ for $p>\frac32$ and $\kappa$ sufficiently large, then it is bounded and consequently smooth. We refer also to  \cite{BP} for a local version of a close result.  We discuss the consequences of the new work \cite{GS} on the results presented in this paper in Section \ref{sec:GS}.
\medskip

We end this  description of the literature on the Landau equation with very interesting results about a related model, known as the \emph{isotropic Landau equation}, introduced in \cite{Krieger} and investigated later in \cite{GKS,GG1,GG2}. For such a model, the projection $\Pi(z)$ in the Landau equation is simply replaced with $\Pi(z)=\mathrm{Id}$ and this simplification yields striking results. In particular, there exists $\g_{*} \in (-\frac{5}{2},-2)$ such that, for $\g \in (\g_{*},-2)$, $L^{p}$-norms are \emph{propagated} for $p > \frac{d}{d+\g+2}$ and become instantaneously bounded. Therefore, classical solutions of the isotropic Landau equation remain smooth for every ﬁnite time (see  Theorem 1.1 of \cite{GG2}). Similar results have been obtained recently for an isotropic version of the Boltzmann equation, see \cite{snelson}.


\medskip

\subsection{Link between the Landau equation and the Navier-Stokes equation}

Some striking analogies between the spatially homogeneous Landau for very soft potentials and the incompressible $3d$-Navier-Stokes have been observed in recent contributions. More precisely, recall the incompressible $3d$-Navier-Stokes with viscosity $\bm{\nu} >0$:
\begin{equation}\label{eq:NS}
\partial_{t}u + u \cdot \nabla u - \bm{\nu}\,\Delta u+\nabla p=0, \qquad 
\nabla \cdot u=0,\qquad 
u(t=0)=u_{\rm in}, \qquad \nabla \cdot u_{\rm in}=0,\,
\end{equation}
where $u=u(t,x) \in \R^{3}$. It appears that the $H$-solutions constructed in \cite{villH}
 share several common points with Leray solutions to the incompressible $3d$-Navier-Stokes equations  \cite{Leray,Ozan}:
\begin{itemize}
\item One first evidence is the contribution of the third author about the role of entropy dissipation, showing that $H$-solutions to Landau equation satisfy \eqref{eq:fish} for any $T >0$ providing an a priori estimate for solutions $f$ to \eqref{eq:Landau}. In particular, $H$-solutions to Landau equation are weak-solutions just like Leray solutions to Navier-Stokes equations are weak solutions and the  entropy $H$ plays for the Landau equation a similar role as the energy $\|u(t)\|^2_{2}$ for the Navier-Stokes equation. Note that in this analogy, $\sqrt{f}$ plays the role of $u$, since on one hand $\sqrt{f} \in L^{\infty}_t(L^2_v)$, and on the other hand $\sqrt{f}$ belongs to some weigthed $L^2_t(H^1_v)$ space.
\item Based on this estimate, it has been shown in \cite{GGIV} that the Hausdorff dimension of the set of time singularities is at most $\frac{1}{2}$ for solutions to \eqref{eq:Landau} in the Coulomb case ($\g=-3$, $d=3$). This result has been extended to  soft potentials $-3  < \g < -2$ in \cite{GIV} and is the 
 analogue of the celebrated Caffarelli-Kohn-Nirenberg Theorem \cite{CKN,OzanCK} for the Navier-Stokes equations.
\item Finally, a new monotonicity formula for a functional involving entropy, regularity and moments has been obtained in \cite{DesvHe}. This formula entails in particular that, if 
$$H(f_{\rm in})\left(\|f_{\rm in}-\M\|_{\dot{\H}^{1}}+C\right) \leq \frac{5}{2},$$
then no blow-up can occur for solutions to \eqref{eq:Landau}. Here,  $H(f_{\rm in})$ is the entropy of the initial datum, $\M$ is the associated Maxwellian steady state, and $C >0$ is some fixed universal constant. Such a result  can be compared to the result of \cite{Leray} for Navier-Stokes equation which shows that, provided $\|u_{\rm in}\|_{2}\,\|\nabla u_{\rm in}\|_{2} \ll 1,$ the Navier-Stokes equations admit global smooth solutions (see also \cite{Ozan}). Regularity of solutions after a given (sufficiently large)  time can also be considered as a common feature of both equations.
\end{itemize}

An important criterion about the existence of global classical solutions to 3d-incompressible Navier-Stokes equation is the celebrated \emph{Prodi-Serrin} criterion which reads as follows (see \cite[Theorems 4.8 and 4.9]{vicol}):
\begin{theo}[\textit{\textbf{Prodi-Serrin criterion for 3d-incompressible Navier-Stokes equation}}]\label{theo:PSNS}
Consider two Leray-Hopf weak solutions $u ,\tilde{u}$ of the Navier-Stokes equation with $u(0)=\tilde{u}(0)=u_{\rm in}$. If 
\begin{equation}\label{eq:PSNS}
u \in L^{r}([0,T];L^{q}(\R^{3})), \qquad with \qquad \frac{2}{r}+\frac{3}{q}=1, \quad q \in (3,\infty]
\end{equation}
then $u=\tilde{u}$ on $[0,T).$ Moreover, for $q=3$ (and $r=\infty$), there exists a universal constant $\delta >0$ such that, if 
$$\sup_{t \in [0,T]}\|u(t)\|_{3} \leq \delta \bm{\nu}$$ then $u=\tilde{u}$ on $[0,T).$ Finally, in both cases, $u$ is a strong solution on $[0,T]$ provided $u_{\rm in} \in H^{1}(\R^{3})$ whereas, if $u_{\rm in} \in L^{2}(\R^{3})$, then $u $ is a strong solution solution on $[t_{*},T]$ for any $t_{*} >0.$
\end{theo}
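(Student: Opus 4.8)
\emph{Sketch of proof (for completeness).} This is the classical weak--strong uniqueness and conditional regularity result of Prodi, Serrin and Ladyzhenskaya, and the plan is to reproduce the standard energy argument. Denote by $u$ the Leray--Hopf solution satisfying \eqref{eq:PSNS} and by $\tilde u$ an arbitrary Leray--Hopf solution with the same datum, and set $w:=u-\tilde u$, which is divergence free with $w(0)=0$. The solution $\tilde u$ satisfies the energy \emph{inequality}, while $u$, thanks to \eqref{eq:PSNS} and interpolation with the Leray bounds $u\in L^{\infty}((0,T);L^{2})\cap L^{2}((0,T);\dot H^{1})$, lies in a Shinbrot-type class and hence satisfies the energy \emph{equality} (in the endpoint case $q=3$ this is immediate since $\|u(t)\|_{4}\leq \|u(t)\|_{3}^{1/2}\|u(t)\|_{6}^{1/2}$ is integrable in $t$). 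Adding the two relations and using $u$ (respectively $\tilde u$) as a test function in the weak formulation of the equation for $\tilde u$ (respectively $u$) --- the pressures dropping out because the test functions are solenoidal --- together with the cancellation $\int_{\R^{3}}(\tilde u\cdot\nabla)w\cdot w\,\d x=0$, one obtains for a.e.\ $t\in[0,T)$
\[
\tfrac12\,\|w(t)\|_{2}^{2}+\bm{\nu}\int_{0}^{t}\|\nabla w(s)\|_{2}^{2}\,\d s\ \leq\ -\int_{0}^{t}\!\!\int_{\R^{3}}(w\cdot\nabla)u\cdot w\,\d x\,\d s\ =\ \int_{0}^{t}\!\!\int_{\R^{3}}(w\cdot\nabla)w\cdot u\,\d x\,\d s,
\]
the last step by integration by parts and $\nabla\cdot w=0$.

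The core of the argument is then the trilinear estimate. By Hölder's inequality, the Gagliardo--Nirenberg inequality $\|w\|_{\frac{2q}{q-2}}\leq C\,\|w\|_{2}^{1-\frac3q}\|\nabla w\|_{2}^{\frac3q}$ in $\R^{3}$ (valid for $q>3$), and Young's inequality with the conjugate exponents $\frac{2q}{q+3}$ and $\frac{2q}{q-3}$,
\[
\Big|\int_{\R^{3}}(w\cdot\nabla)w\cdot u\,\d x\Big|\ \leq\ C\,\|u\|_{q}\,\|\nabla w\|_{2}^{1+\frac3q}\,\|w\|_{2}^{1-\frac3q}\ \leq\ \tfrac{\bm{\nu}}{2}\,\|\nabla w\|_{2}^{2}+C_{\bm{\nu}}\,\|u\|_{q}^{\,r}\,\|w\|_{2}^{2},
\]
where the power of $\|u\|_{q}$ that comes out is exactly $\frac{2q}{q-3}=r$ by the scaling relation $\frac2r+\frac3q=1$; this is the precise point where the Prodi--Serrin condition is used. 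Absorbing $\frac{\bm{\nu}}{2}\|\nabla w\|_{2}^{2}$ into the left-hand side and applying Grönwall's lemma (integral form) with the weight $t\mapsto\|u(t)\|_{q}^{r}\in L^{1}(0,T)$ forces $w\equiv0$, i.e.\ $u=\tilde u$ on $[0,T)$. In the endpoint case $q=3$, $r=\infty$, Young's inequality degenerates and one keeps instead the Sobolev bound $\|w\|_{6}\leq C_{S}\|\nabla w\|_{2}$ in $\R^{3}$, so that the trilinear term is bounded by $C_{S}\|u\|_{3}\|\nabla w\|_{2}^{2}$; choosing $\delta:=1/C_{S}$ (or any smaller constant) and using $\|u\|_{L^{\infty}((0,T);L^{3})}\leq\delta\bm{\nu}$ makes this term absorbable by the viscous term, again giving $w\equiv0$.

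For the regularity assertion, once \eqref{eq:PSNS} holds a parabolic bootstrap shows that $u$ is $C^{\infty}$ on $(0,T)\times\R^{3}$. To propagate regularity up to $t=0$ when $u_{\rm in}\in H^{1}(\R^{3})$, I would test the equation for $u$ against $-\Delta u$ and bound the trilinear term $\int(u\cdot\nabla)u\cdot\Delta u$ by the same Hölder--Gagliardo--Nirenberg--Young chain, obtaining $\frac{\d}{\d t}\|\nabla u\|_{2}^{2}+\bm{\nu}\|\Delta u\|_{2}^{2}\leq C\,\|u\|_{q}^{r}\,\|\nabla u\|_{2}^{2}$; Grönwall's lemma then yields $u\in L^{\infty}((0,T);H^{1})\cap L^{2}((0,T);H^{2})$, which is the strong-solution class and, together with Sobolev embeddings, gives full regularity on $[0,T]$. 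If only $u_{\rm in}\in L^{2}(\R^{3})$, the Leray bound $\nabla u\in L^{2}((0,T)\times\R^{3})$ gives $u(t_{0})\in H^{1}(\R^{3})$ for a.e.\ $t_{0}\in(0,T)$, and running the previous argument from such a $t_{0}$ (which can be taken arbitrarily small) shows that $u$ is strong on $[t_{*},T]$ for every $t_{*}>0$.

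The formal computations above are not the difficulty; the genuine work is their rigorous justification, namely: (i) establishing the generalized (Leray) energy inequality for the weak solution $\tilde u$; (ii) proving that the Prodi--Serrin solution $u$ satisfies the energy \emph{equality}, which is where \eqref{eq:PSNS} and its interpolation with the Leray bounds are really needed; and (iii) legitimately using the two weak solutions as mutual test functions, which requires mollification in time, a careful treatment of the (a priori only distributional) pressure, and the weak-$L^{2}$ continuity of $t\mapsto u(t)$ and $t\mapsto\tilde u(t)$. These are precisely the technical steps carried out in \cite[Theorems 4.8 and 4.9]{vicol}, to which we refer for the details.
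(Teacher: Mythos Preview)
The paper does not prove this theorem: it is stated purely as background and motivation for the Landau analogue, with a reference to \cite[Theorems 4.8 and 4.9]{vicol} and the surrounding literature (\cite{prodi,serrin,seregin,lemarie}). Your sketch is the standard energy/weak--strong uniqueness argument (H\"older + Gagliardo--Nirenberg + Young so that the exponent on $\|u\|_q$ comes out exactly $r$, then Gr\"onwall; Sobolev absorption in the endpoint $q=3$), which is precisely what those references do, and you correctly flag the genuine technical content (energy equality for the Prodi--Serrin solution, legitimacy of the cross-testing) and defer to the same source the paper cites. There is nothing to compare against in the paper itself.
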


We refer to \cite{vicol,lemarie} for more details on the notions of Leray-Hopf weak solutions in the above Theorem. The aforementioned result has been proven first in \cite{prodi} (with some additional assumption) for $q >3$ and derived in the present form in \cite{serrin}, still for $q >3.$ The statement here above for $q=3$ is a very special case called the \emph{endpoint Prodi-Serrin} criterion. It is somehow possible to remove the smallness assumption on $\|u\|_{L^{\infty}([0,T]\;;L^{3}(\R^{3}))}$ in that case, but the corresponding result reads then  a bit differently: the breakthrough result \cite{seregin} dealing with the case $r=\infty,q=3$ shows indeed smoothness and uniqueness in some indirect way. Precisely, result of \cite{seregin} rather reads as follows: if  $u$ is a classical solution to Navier-Stokes equation whose maximal time of existence $T$ is finite, then
$$\limsup_{t \to T}\left\|u(t)\right\|_{3}=+\infty.$$
We also refer the reader to Chapters 12 and 15 of \cite{lemarie}.  Similar criteria have been established for other types of problems, spanning from the study of  MHD equations \cite{Jia} to stochastic differential equations \cite{Neves} ; \cite{KR,RZ23}. We notice that the proof of Theorem \ref{theo:PSNS} is fully quantitative whereas the endpoint proof of \cite{seregin} is obtained by contradiction and thanks to a compactness argument (we refer to the recent contributions \cite{tao1,palasek} for a quantitative approach of the same result).

\subsection{Main results of the paper}

In the present paper, we intend to obtain a result which extends the bounds obtained in \cite{silvestre}, and which is the equivalent for the Landau equation of the Prodi-Serrin result for the Navier-Stokes equation. As mentioned already, our results cover  all dimensions $d \ge 3$, any soft potentials $\g \in [-d, 0)$, and are valid for a suitable range  of admissible $L^r_t(L^q_v)$ spaces (with exception of the end-point estimate $r=\infty$, see Section \ref{sec:endpoint}). 

As it is the case for the Navier-Stokes equation, the Prodi-Serrin criteria that we obtain are actually the fundamental conditional assumptions which allow to prove the propagation and appearance of $L^{p}$-norms for suitable $p$, for the solutions to the spatially homogeneous equation. Such propagation/appearance of $L^{p}$-norms can be seen as the main result of the present contribution and such conditional results are completely new to our knowledge (for $r<\infty$). We will see that they are also the cornerstone for uniqueness and further smoothness of solutions to \eqref{eq:Landau}. 

For the clarity of presentation, and due to the physical relevance of the Coulomb case, we distinguish the two cases $\g=-d$ and $\g \in (-d,0)$, beginning with the Prodi-Serrin criterion for Landau equation in the Coulomb case $\g =-d$.
 
\begin{theo}[\textit{\textbf{Prodi-Serrin criterion for Landau equation and Coulomb interaction}}]\label{theo:Coul}
We consider an integer dimension $d \ge 3$ and let $f_{\rm in}$ and $f=f(t,v)$ define a  solution to Eq. \eqref{eq:Landau} in the sense of Definition \ref{defi:weak} with 
$$\g=-d.$$ Assume that the solution $f$ satisfies one of the following conditions:
\begin{subequations}\label{eq:PS-Cin}
\begin{equation}\label{eq:PS-Cin1}
f \in L^{1}([0,T]\,;\,L^{\infty}(\R^{d})),
\end{equation}
or
\begin{equation}\label{eq:PS-Cinr} \langle \cdot \rangle^{d}f \in L^{r}([0,T]\,;\,L^{q}(\R^{d})), \qquad \frac{2}{r}+\frac{d}{q}=2, \quad r \in (1,\infty), \;\;q \in \left(\frac{d}{2},\infty\right).\end{equation}
\end{subequations}
Then, the following holds
\begin{enumerate}[(a)]
\item \textbf{{(Propagation of $L^{p}$-norms).}} Given $p \in (1,\infty)$, if 
$f_{\rm in} \in L^{p}(\R^{d})$ then 
$$\sup_{t \in [0,T]}\|f(t)\|_{p} \le  \bm{C}_{p,q,T}\,\|f_{\rm in}\|_{p }$$
where  $\bm{C}_{p,q,T}$ is  an \emph{explicit} constant depending on $p,d,q$ and the norms $\|f\|_{L^{1}([0,T]\,;\,L^{\infty}(\R^{d}))}$ or $\|\langle \cdot \rangle^{d}f\|_{L^{r}([0,T]\,;\,L^{q}(\R^{d}))}$ 
 according to the above condition \eqref{eq:PS-Cin}.
\medskip

\item \textbf{{(Appearance of $L^{p}$-norms).}} Given $p \in (1,\infty)$, assume that  
$$f_{\rm in} \in L^{1}_{\nu_{p}}(\R^{d}), \qquad \nu_{p}= \frac{d^{2}}{2}\frac{p-1}{p}$$
then
$$\|f(t)\|_{p} \le C_{p,T}(f_{\rm in})\,t^{-\frac{d}{2}(1-\frac{1}{p})}, \qquad \forall t \in (0,T]$$
for some \emph{explicit} positive constant $C_{p,T}(f_{\rm in})$ which depends on {$T,p,d,q$}, the initial datum $f_{\rm in}$ through the quantifies  $\varrho_{\rm in}$, $E_{\rm in}$, $H(f_{\mathrm{\rm in}})$ defined in Definition \ref{weak-sol}, $\|f_{\rm in}\|_{L^{1}_{\nu_{p}}}$, and the norms associated to the above condition \eqref{eq:PS-Cin}.
\end{enumerate} 
\end{theo}

\begin{rmq} We emphasise here the different nature of the two Prodi-Serrin criteria in \eqref{eq:PS-Cin}. The assumption \eqref{eq:PS-Cin1} does not require any moment assumption on $f$ whereas the Prodi-Serrin criterion \eqref{eq:PS-Cinr} involves the weighted solution $\langle \cdot \rangle^{d}f$, see Remark \ref{nb:PSineq} for more details. We also point out that, under the assumption $f \in L^{1}([0,T]\,,\,L^{\infty}(\R^{d}))$, the conclusion of point $(a)$ still holds for $p=\infty$ (see Prop. \ref{without_dissip}).
 \end{rmq}

The full proof of the above Theorem is presented in Section \ref{sec:Coulomb} and is based on several intermediate results. Typically, Proposition \ref{without_dissip} shows the propagation of $L^{p}$-norm under the $L^{1}_{t}(L^{\infty}_{v})$ criterion while Proposition \ref{eq_with_dissip} shows the appearance of $L^{p}$-norms under this assumption. The general $L^{r}_{t}(L^{q}_{v})$ case is dealt with in Proposition \ref{mps_coul} and Corollary \ref{cor:LinftyLP} for appearance and propagation of $L^{p}$-norms respectively. Note also that constants are explicitly given in those propositions.
\medskip

An analogue of the above result holds true for general soft-potentials $\g \in (-d,0)$

\begin{theo}[\textit{\textbf{Prodi-Serrin criterion for Landau equation with soft potentials}}] 
\label{827}

We consider an integer dimension $d \ge 3$ and let $f_{\rm in}$ and $f=f(t,v)$ define a  solution to Eq. \eqref{eq:Landau} in the sense of Definition \ref{defi:weak} with 
$$-d < \g <0.$$ Assume that the solution $f$ satisfies one of the following conditions:
\begin{subequations}\label{eq:PS-Cing}
\begin{equation}\label{eq:PS-Cing1}
f \in L^{1}([0,T]\,;\,L^{\frac{d}{d+\g}}(\R^{d}))
\end{equation}
or
\begin{equation}\label{eq:PS-Cingr}
\langle \cdot \rangle^{|\g|}f \in L^{r}([0,T]\,;\,L^{q}(\R^{d})), \qquad \frac{2}{r}+\frac{d}{q}=2+d+\g, \end{equation}\end{subequations}
where $r \in (1,\infty),$ $q \in \left(\max\left(1,\frac{d}{d+\g+2}\right), \frac{d}{d+\g} \right).$
Then, the following holds
\begin{enumerate}[(a)]
\item \textbf{{(Propagation of $L^{p}$-norms).}} If $f_{\rm in} \in L^{p}(\R^{d})$ with $p \in (1,\frac{d}{d+\g})$ under condition \eqref{eq:PS-Cing1} or $p \in (1,\infty)$ under condition \eqref{eq:PS-Cingr}, then 
$$\sup_{t \in [0,T]}\|f(t)\|_{p} \le  \bm{C}_{p,\g,q,T}\,\|f_{\rm in}\|_{p }$$
where  $\bm{C}_{p,\g,q,T}$ is  an \emph{explicit} constant depending on $p,d,q,\g$ {the initial datum $f_{\rm in}$ through  $\varrho_{\rm in}$, $E_{\rm in}$, $H(f_{\mathrm{\rm in}})$,} and the norms associated to the above conditions \eqref{eq:PS-Cing}.
\medskip

\item \textbf{{(Appearance of $L^{p}$-norms).}} Let $p \in (1,\frac{d}{d+\g})$ under condition \eqref{eq:PS-Cing1} or $p \in (1,\infty)$ under condition \eqref{eq:PS-Cingr}. Assume that  
$$f_{\rm in} \in L^{1}_{\nu_{\g,p}}(\R^{d}), \qquad \nu_{\g,p}:= \frac{|\g| d}{2}\left(1-\frac{1}{p}\right)\,,$$
then
$$\|f(t)\|_{p} \le C_{p,\g,q,T}(f_{\rm in})\,t^{-\frac{d}{2}(1-\frac{1}{p})}, \qquad \forall t \in (0,T]$$
for some \emph{explicit} positive constant $C_{p,\g,q,T}(f_{\rm in})$ which depends on  {$d,\g,T,p,q$}, the initial datum $f_{\rm in}$ through  $\varrho_{\rm in}$, $E_{\rm in}$, $H(f_{\mathrm{\rm in}})$, $\|f_{\rm in}\|_{L^{1}_{\nu_{\g,p}}}$, and the norms associated to the above condition \eqref{eq:PS-Cing}.
\end{enumerate}  
\end{theo}
 
\begin{rmq} Notice that the link $\frac{2}{r}+\frac{d}{q}=2+d+\g$, with $r,q >1$ implies 
$$\max\left(1,\frac{d}{d+2+\g}\right) < q < \frac{d}{d+\g},$$
so that this condition is automatically satisfied in Theorem \ref{827}.
\end{rmq}  
 
\begin{rmq}\label{nb:PSineq} As in the Coulomb case, one notices the discrepancy between the two sets of assumptions for our Prodi-Serrin like criteria. In particular, the assumption \eqref{eq:PS-Cing1} does not require any moment assumption on $f$ whereas the Prodi-Serrin criterion \eqref{eq:PS-Cingr} applies to the weighted solution $\langle \cdot \rangle^{|\g|}f$ instead of the mere solution $f$. We point out that, by using a simple interpolation argument, we can reformulate the Prodi-Serrin assumption with an \emph{inequality} linking the two parameters $(r,q)$.  Namely, if one assumes that $\g+2<0$ and
$$f \in L^{r_{0}}([0,T]\,,\,L^{q_{0}}(\R^{d})) \qquad \frac{2}{r_{0}} + \frac{d}{q_{0}} < d+\g+2$$ 
then given $\max\left(1,\frac{d}{d+\g+2}\right) < q < \infty$, assuming 
$$f_{\rm in} \in L^{1}_{\mu}(\R^{d}), \qquad \mu=|\g|\frac{q(q_{0}-1)}{q_{0}-q}$$
one  can find $r >1$ such that
$$\langle \cdot \rangle^{|\g|}f \in L^{r}([0,T]\,,\,L^{q}(\R^{d})), \qquad \frac{2}{r}+\frac{d}{q}=d+\g+2.$$
We also refer to \cite{GGL} for consideration on another kind of inequality relating the parameters $r$ and $q.$\end{rmq}
 
The above result is similar to its analogue for the Coulomb case but we point out here that, dealing with soft-potentials $\g \in (-d,0)$ leads to additional technical difficulties due to the fact that the lowest order term $\bm{c}_{\g}[f]$ in \eqref{eq:Landau} is a convolution. To overcome this difficulty, we give here a general estimate involving such a term, which is a variant of estimates introduced in \cite{GG}, who named it $\e$-Poincar\'e inequality. For moderately soft potentials, the $\e$-Poincar\'e inequality  has been already used to show the appearance of $L^{p}$-norms in the contributions \cite{ABDL1, ABL}. We propose here a version, possibly sharp, of such a functional inequality, which involves suitable $L^{q}$-space estimates:%

\begin{prop}[\textit{\textbf{General $\e$-Poincar\'e inequality}}]\label{prop:GG} Assume that $d\in\N$, $d\ge 3$, $-d \le \g < 0$, and
$$ \max\left(1, \frac{d}{d+2+\g}\right) < q  < \frac{d}{d+\g},  \qquad {\left( \frac{d}{d+\g} = \infty \quad {\hbox{ if }} \quad \g = -d \right)} .$$
Then there exists $C_{0} >0$ depending only on $d, \g, q$ such that, for any $\e>0$ (and suitable functions $\phi$ and $g\ge0$),
\begin{multline}\label{eq:estimatcLP}
\int_{\R^{d}}\phi^{2}\bm{c}_{\g}[g]\d v \leq \e\,\int_{\R^{d}}\left|\nabla \left(\langle v\rangle^{\frac{\g}{2}}\phi(v)\right)\right|^{2}\d v \\
+C_{0}\left( \|g\|_{{1}}  +\e^{-\frac{s}{1-s}}\left\|\langle \cdot \rangle^{|\g|}g\right\|_{q}^{\frac{1}{1-s}}\right)\int_{\R^{d}}\phi^{2}\langle v\rangle^{\g}\d v,
\end{multline}
where $s \in (0,1)$ is given by $s=\dfrac{d-q(d+\g)}{2q}.$
\end{prop} 

\begin{rmq} With the above notations, if one sets $r=\frac{1}{1-s}$, one notices that $\frac{2}{r}+\frac{d}{q}=d+\g+2$. When applied to the solution $g=f(t)$ to the Landau equation \eqref{eq:Landau}, one sees therefore that the integrability in time of $\left\|\langle \cdot \rangle^{|\g|}f(t)\right\|_{q}^{\frac{1}{1-s}}$ exactly corresponds to our Prodi-Serrin criterion \eqref{eq:PS-Cingr}.\end{rmq}

The proof of this fundamental functional inequality is given in Section \ref{sec:eps}. This is the main tool in the proof of Theorem \ref{827}, under assumption \eqref{eq:PS-Cingr}. The proof of Theorem \ref{827} is then given in Section \ref{sec:prodi}, in which Proposition \ref{without_dissip-g} shows the propagation of $L^{p}$-norm under condition \eqref{eq:PS-Cing1} while Proposition \ref{prop:L1-criterion} shows the appearance of $L^{p}$-norms under this same condition. The general $L^{r}_{t}(L^{q}_{v})$ case \eqref{eq:PS-Cingr} is dealt with in Proposition \ref{mps_g} and Corollary \ref{cor:propagLP}, for appearance and propagation of $L^{p}$-norms respectively. The  constants in Theorem \ref{827} are made explicit in those results.

\subsection{Relevance of our main results}

We chose to present the main results of the paper as conditional theorems ensuring the propagation and appearance of $L^{p}$-norms for the solutions to the Landau equation. Besides their specific interest, such propagation and appearance results are the fundamental tools which allow to deduce both \emph{uniqueness} and \emph{regularity} of solutions to the Landau equation. Precisely, as already observed in \cite{chern} in the Coulomb case, suitable bounds on $L^{p}$-norms for solutions to Landau equation imply uniqueness of the solution. While the result is stated in \cite{chern} as a consequence of $L^{\infty}([0,T]\,,\,L^{p}(\R^{d}))$ norms with $p > \frac{3}{2}$ in the Coulomb case $\g=-3,d=3$, our main results Theorem \ref{theo:Coul} and \ref{827} can be seen as suitable conditional result yielding such bounds valid for any dimension $d \geq 3$ and for any choice $\g \in [-d,0)$. Then, as in \cite{chern}, such bounds yield uniqueness of suitable strong solutions. This question is addressed in Section \ref{sec:unique} and the main result of this section can then be stated as follows:

\begin{theo}\label{theo:unique} We consider an integer dimension $d \ge 3$. Let  $\g \in [-d, -2)$ and assume that $2 > \frac{d}{d+\g+2}.$ Let $f=f(t,v), g=g(t,v)$ define two solutions to Eq. \eqref{LFD} in the sense of Definition \ref{defi:weak}, 
with 
  $$f(0,\cdot) = g(0,\cdot) = f_{\rm in} \in {L^{2}_{k+2|\g|}(\R^{d})\cap L^{1}_{N}(\R^{d})},$$
where {$k>d$, and $ N$} is defined in Proposition \ref{theo:chern2}. If $f,g$ satisfy one of the two assumptions  \eqref{eq:PS-Cing1} or \eqref{eq:PS-Cingr} if $\g \in (-d,0)$, or \eqref{eq:PS-Cin1} or \eqref{eq:PS-Cinr} in the Coulomb case $\g=-d$,
 then $$f(t)=g(t) \qquad \forall t \in [0,T].$$
\end{theo}

\begin{rmq} We restrict our uniqueness result to the case of very soft potentials $-d \leq \g < -2$ only, since the case of moderate soft potentials $\g \in [-2,0)$ is well-understood (see \cite{ALL,Wu} and the recent contribution \cite{ABL}). Notice that 
because of the condition $2 > \frac{d}{d+\g+2}$,   our analysis is restricted in the Coulomb case to the physical dimension $d=3$. \end{rmq}

The above result can be seen as a generalisation and an improvement on existing conditional uniqueness results for Landau equation. As mentioned already, the uniqueness of weak solutions with bounded mass, momentum, energy, and entropy which further lie in $L^{1}([0, T]; L^{\infty}(\R^{d}))$ (i.e. under condition \eqref{eq:PS-Cin1})  has been established in the Coulomb case in \cite{fournier} via probabilistic arguments,
 whereas the results of \cite{chern} provide uniqueness of solutions to \eqref{eq:Landau} under some $L^{\infty}([0,T]\,,\,L^{p}(\R^{d}))$ bound (and decay) with $p >\frac{3}{2}$. \textit{\textbf{Our uniqueness result generalises these two results and extends them to general soft potentials $\g \in [-d,0)$, $d \geq 3$, and the whole range $L^r([0,T]; L^q(\R^d))$, $\frac2r + \frac{d}q = 2 + d + \g$, with $r \in [1, \infty)$. }}
\medskip

Besides the above uniqueness result, our two main results also allow to deduce suitable \emph{unique continuation and regularity results} for the solutions to Landau equation. Even though we do not elaborate on this point in the present contribution, we mention here that, due to the parabolic nature of the Landau equation, the appearance of $L^{p}$-bounds is of paramount important in order to deduce regularity estimates:
\begin{enumerate}[(1)]
\item Elaborating on the pioneering work of  \cite{degiorgi}  (suitably adapted to spatially homogeneous kinetic equations in \cite{ricardo}), it is a well-established fact that the appearance of (uniform in time) $L^{p}$-bounds (for $p$ large enough) yields the \emph{appearance of $L^{\infty}$-bounds} for solutions to Landau equation. Such a $L^{p}-L^{\infty}$ De Giorgi's argument has been already introduced in the study of Landau equations in \cite{ABDL1,GGL,ABL}. More specifically, one can deduce from Theorems \ref{theo:Coul} or \ref{827}
the following: if $f=f(t,v)$ is a solution to the Landau equation \eqref{eq:Landau} satisfying \eqref{eq:PS-Cin}  {or \eqref{eq:PS-Cingr}} (according to $\g=-d$ or $-d < \g <0$), and suitable $L^{1}$-moments estimates, then for any $t_{\star} \in (0,T)$ there exists $K(t_{\star},T) >0$ such that
\begin{equation}\label{eq:Linfp-intro}
\sup_{t \in [t_{*},T]}\left\|f(t, \cdot)\right\|_{ {\infty}} \leq K(t_{*},T).
\end{equation}
A full proof of such an estimate is given in the case of moderately soft potentials $\g \in [-2,0)$ in \cite{ABL} and in \cite{ABDL} for general $-d \leq \g < -2$. This appearance of $L^{\infty}$-bounds (uniform in time on $[t_{*},T]$) is close to results presented in \cite{silvestre}. While the approach of the latter is based upon general results regarding strong solutions to parabolic equations, the approach in \cite{ABDL,ABL} is, as said, based upon an adaptation of the  approach of  \cite{degiorgi} to spatially homogeneous kinetic equations introduced in \cite{ricardo}. 
\item Suitable $L^{\infty}$-bounds like \eqref{eq:Linfp-intro} can then be combined with parabolic regularising effect to deduce the smoothness of the solution. We refer to \cite[Proposition A.1]{GIV} for a full proof of the fact that, in dimension $d=3$ and for any $\g \in [-3,-2)$, for any weak solution $f=f(t)$ to \eqref{eq:Landau} in the sense of Definition \ref{defi:weak},  it holds
$$f \in L^{\infty}([t_{*},T]\,;\,L^{\infty}(\R^{d})) \implies f \in \mathscr{C}^{\infty}((t_{*},T] \times \R^{d}).$$ 
Therefore, our main results Theorems  \ref{theo:Coul} and \ref{827}, provide conditional results  (in the form of the assumptions \eqref{eq:PS-Cin} or \eqref{eq:PS-Cing}) ensuring the smoothness of the solution to the Landau equation. We recall also that such smoothness provides also \emph{unique continuation} criteria of the solution. We refer to the recent contribution \cite{SnSo} for more details about this question.
\end{enumerate}

The above two points illustrate the importance of devising sufficient conditions yielding the appearance and/or propagation of $L^{p}$-norms for solutions to the Landau equation and, on this basis, we strongly believe that the Prodi-Serrin criteria \eqref{eq:PS-Cin} (in the case $\g=-d$) or \eqref{eq:PS-Cing} (in the case $\g \in (-d,0)$) provide a significant contribution to the conditional uniqueness and regularity of Landau equation, yielding an analogue of the Theorem \ref{theo:PSNS} for Navier-Stokes equation.

We end this description mentioning that the method and ideas developed in this paper can be adapted to derive Prodi-Serrin criterion for the spatially homogeneous Boltzmann equation with soft potentials without cut-off assumptions. We refer to the work in progress \cite{Pierre} for further details.

\subsection{About the class of solutions}

Our main results Theorem \ref{theo:Coul} and \ref{827} have to be interpreted as practical criteria to get \emph{a priori} estimates for weak solutions to the Landau equation in an explicit and quantitative way. In this context, most of the computations that we are providing may be seen as formal, especially since they use one or several integrations by parts. Those estimates can however be fully justified under some extra assumption and, in particular, we strongly believe that our formal estimates are actually valid for \emph{any weak solutions} to the Landau equation, which satisfy the Prodi-Serrin criteria that we stated. We define here weak solutions in the following way:
\begin{defi}\label{weak-sol} Given $d\in \N$, $d \ge 3$, $\g \in [-d,0)$ and $T >0$, let  {$f_{\rm in} \ge 0$ lie in  $L^{1}_{2}(\R^{d}) \cap L\log L(\R^{d})$},  i.e. $f_{\rm in}$ admits finite mass, energy and entropy as defined respectively by
$$\varrho_{\rm in}:=\int_{\R^{d}}f_{\rm in}(v)\d v >0, \qquad E_{\rm in}:=\int_{\R^{d}}f_{\rm in}(v)|v|^{2}\d v,$$
and $$H(f_{\rm in}):=\int_{\R^{d}}f_{\rm in}(v)\log f_{\rm in}(v)\d v.$$
 We say that a family  $f=f(t,v) \ge 0$  is a weak solution to \eqref{eq:Landau} with initial condition $f(0,v)=f_{\rm in}(v)$ if the following hold:
 \begin{enumerate}
\item $f \in \mathcal{C}([0, T ); \mathcal{D}'(\R^{d}))$ and 
\begin{equation} \label{eq0}
\int_{\R^{d}}f(t,v)\left(\begin{array}{c}1 \\v \\ |v|^{2}\end{array}\right)\d v=\int_{\R^{d}}f_{\mathrm{\rm in}}(v)\left(\begin{array}{c}1 \\v \\ |v|^{2}\end{array}\right)\d v\,.\end{equation}
\item One has
\begin{equation}\label{eq:fish}
H(f(t)) \leq H(f_{\rm in}) \qquad \forall t \in [0,T]\,, \quad \text{ and } \quad 
{\int_{0}^{T}\d t}  {\int_{\R^{d}}\left|\nabla \sqrt{f(t,v)}\right|^{2}\langle v\rangle^{\g}\d v < \infty\,.}
\end{equation}
\item For any $\varphi=\varphi(t,v) \in \mathscr{C}_{c}^{2}([0,T)\times \R^{d})$,
\begin{multline}\label{weakform}
-\int_{0}^{T}\d t\int_{\R^{d}}f(t,v)\partial_{t}\varphi(t,v) \d v-\int_{\R^{d}}f_{\mathrm{\rm in}}(v)\varphi(0,v)\d v\\
=\frac{1}{2}\sum_{i,j=1}^{d}\int_{0}^{T}\d t \int_{\R^{d} \times \R^{d}} f(t,v)f(t,w)a_{i,j}(v-w)\left[\partial^{2}_{v_{i},v_{j}}\varphi(t,v)+\partial^{2}_{w_{i},w_{j}}\varphi(t,w)\right]\d v\d w\\+
\sum_{i=1}^{d}\int_{0}^{T}\d t\int_{\R^{d}\times \R^{d}}f(t,v)f(t,w)\bm{b}_{i}(v-w)\left[\partial_{v_{i}}\varphi(t,v)-\partial_{w_{i}}\varphi(t,w)\right]\d v\d w.
\end{multline} 
\end{enumerate}
\end{defi}

\begin{rmq} 
 {Notice that  slightly weaker solutions to Landau equation  have been shown to exist in \cite{villH} for initial data considered in Def. \ref{weak-sol}   (and called \emph{H-solutions}) which are not assumed to satisfy estimate \eqref{eq:fish}. It has been shown later  (when $d=3$)  in  \cite{DesvJFA} that such solutions (when they are built thanks to a suitable approximation process) do satisfy \eqref{eq:fish} and are
in fact weak solutions in the sense of Def. \ref{weak-sol}}. Subsequently, it has been shown in  \cite{GZ} that such solutions  satisfy the estimates
$$\mathcal{A}[f] \in L^{\infty}\left([0,T],L^{\frac{d}{d-2}}_{\mathrm{loc}}(\R^{d})\right), \qquad \bm{b}[f] \in L^{\infty}([0,T]\,;\,L^{\frac{d}{2}}_{\mathrm{loc}}(\R^{d}))$$ and, for any test-function $\phi \in L^{\infty}((0,T)\;,\;\mathbb{W}^{1,\infty}_{c}(\R^{d}))$, it holds
\begin{multline}\label{eq:weak-GZ}\int_{0}^{T}\d t\int_{\R^{d}}\partial_{t}f(t,v)\phi(t,v)\d v \\
+\int_{0}^{T}\d t\int_{\R^{d}}\left(\mathcal{A}[f(t)](v)\nabla f(t,v)-f \bm{b}[f(t)](v)\right) \cdot \nabla \phi(t,v)\d v=0.\end{multline}
Here above $\mathbb{W}^{1,\infty}_{c}(\R^{d})$ denotes the collection of all bounded and compactly supported functions $\phi$  such that $\nabla \phi \in L^{\infty}(\R^{d})$, 
 {and all terms can be well defined.}
 \end{rmq}
\begin{rmq}\label{diffusion} We point out that weak solutions $f=f(t)$ as defined in Definition \ref{weak-sol} are such that there exists a  constant $K_{0} > 0,$ depending on {$H(f_{\mathrm{in}})$ and $\|f_{\rm in}\|_{L^{1}_{2}}$}   such that
$$
\sum_{i,j} \, \mathcal{A}_{i,j}[f(t)](v) \, \xi_i \, \xi_j 
\geq K_{0} \langle v \rangle^{\g} \, |\xi|^2 , \qquad \forall\, v,\, \xi \in \R^{d}, \qquad t \geq0.
$$
We refer to Lemma \ref{diffusionA} in Appendix \ref{sec:known} for a full proof of this estimate.
\end{rmq}

To avoid technical complications and keep the presentation as simple as possible, we rather consider in the paper a class of solutions which already enjoy a sufficient regularity to  justify the computations in the next sections. Such local-in-time solutions have been recently constructed in \cite{DesvHe} in dimension $d=3$, as expressed in the following 
\begin{theo}[\cite{DesvHe}]\label{theo:DeHe}  Let $f_{\rm in} \in L\log L(\R^{3}) \cap L^{1}_{55}(\R^{3}) \cap \dot{\H}^{1}(\R^{3})$ be nonnegative. There exists some explicit $T_{\star} >0$ depending on $\lm_{55}(f_{\rm in}),\|f_{\rm in}\|_{\dot{\H}^{1}}$ and $H(f_{\rm in}),\varrho_{\rm in},E_{\rm in}$ such that \eqref{eq:Landau} admits a unique solution $f$ on the time interval $[0,T_{\star}]$ such that
$$f \in \mathscr{C}([0,T_{\star}],\dot{\H}^{1}(\R^{3})) \cap L^{2}\left([0,T_{\star}]\,,\,\dot{\H}^{2}_{ {\g}}(\R^{3})\right).$$
\end{theo}
Motivated by the above existence and uniqueness result, we define the following notion of solutions that we will use in all the sequel.
\begin{defi}\label{defi:weak}  In all the sequel, given $d\in \N$, $d \ge 3$, $\g \in [-d,0)$ and $T >0$, we will say that an initial datum $f_{\rm in} \geq0$ and a family $f=f(t,v) \geq0$ define a {solution} to \eqref{eq:Landau} if  $f=f(t,v)$ is a weak solution to \eqref{eq:Landau} in the sense of Definition \ref{weak-sol} that further satisfies
$$f \in \mathscr{C}([0,T],\dot{\H}^{1}(\R^{d})) \cap L^{2}\left([0,T]\,,\,\dot{\H}^{2}_{ {\g}}\right).$$
\end{defi} \begin{rmq} The computations in the next section will consist in choosing formally $\phi$ {as a power of $f$}
 as test function in the identity \eqref{eq:weak-GZ}. Under the above additional regularity, we point out that this can be made rigorous simply by choosing, as in \cite{chern}, for the test function $\phi$ in \eqref{eq:weak-GZ} a suitable truncation of 
{a power of $f$} using a cutoff function
$\eta_{R}(v)=\eta\left(R^{-1}v\right)$, $R >1$,
where $\eta$ is a smooth cutoff function identically equal to $1$ on the unit ball of $\R^{d}$ and vanishing on the ball centered at the origin and with radius $2$. 
The computations are then valid for such $\phi$ and all the estimates turn out to be independent of $R$ so, letting $R \to \infty$, we justify our computations. 
\end{rmq}

\subsection{Comments about the work \cite{GS}}\label{sec:GS}

After the completion of this work and the publication of a previous version of our manuscript as a preprint in \cite{ABDL}, an important contribution  \cite{GS} appeared as a preprint, presenting a proof of the fact that classical solutions to \eqref{eq:Landau} do not blow up and can be defined globally.

We still believe that our results remain pertinent to the study of the Landau equation, for reasons that we now explain. The analysis of \cite{GS}, in dimension $d=3$, is based upon the decay of the Fisher information
$$I[f]=4\int_{\R^{d}}\left|\nabla \sqrt{f(v)}\right|^{2}\d v$$
along the flow of classical solutions to \eqref{eq:Landau}, extending  previous partial results (see \cite{toscani,DeVi1,villani,fisher}).  Such a decay is proven through a very clever representation of the Landau operator and suitable functional inequality for the Fisher information of this operator. When applied to classical solutions to \eqref{eq:Landau}, the analysis of \cite{GS} applies to solutions associated with an initial datum $f_{\rm in}$ with finite Fisher information and pointwise Gaussian decay:
\begin{equation}\label{eq:AsGs}
f_{\rm in}(v) \leq \exp\left(-\beta_{0}|v|^{2}\right), \qquad I[f_{\rm in}] < \infty, \qquad \beta_{0} >0\,.\end{equation}
The decay of the Fisher information implies then the following integrability estimates for the considered solution $f=f(t)$:
$$f \in L^{\infty}([0,T],L^{3}(\R^{3})) \cap L^{\infty}([0,T],L^{1}_{k}(\R^{3})), \qquad k \geq 0.$$
Of course, this shows that our Prodi-Serrin criterion is satisfied  and our main results apply to such solutions. In this context, our present result provides \emph{quantitative estimates} on the $L^{p}$-norms for such solutions.

 We also believe that our result is relevant for a wider class of solutions than the one considered in \cite{GS}. 
Indeed pointwise Gaussian decay \emph{is not created} for soft potentials $-3 \leq\g < 0$. In our present contribution, we \emph{do not assume} such a Gaussian decay of the initial datum $f_{\rm in}$, but only some  finite $L^{p}(\R^{d}) \cap L^{1}_{k}(\R^{d})$ norm, for suitable $p >1$, $k >2$. Second, our approach applies to initial data $f_{\rm in}$ with
$I[f_{\rm in}]=\infty$. {Note that a (negatively) weighted Fisher information  is immediately created  for soft potentials $-3 \leq\g < 0$ (see \eqref{eq:fish}) but the rate of appearance is not explicit and  such a result is not known for  the Fisher information itself}.
Our result applies therefore to solutions associated with an initial datum $f_{\rm in}$ with a thick tail decay or rough initial datum, which contrasts with the assumption \eqref{eq:AsGs}. For such initial datum, it is not clear how  the results of \cite{GS} may be applied, and an interesting problem is to see for example  if it is possible to provide criteria allowing to \emph{quantify the rate of appearance} of the Fisher information.

\subsection{Organization of the paper} The proof of Theorem  \ref{theo:Coul}  is given in Section \ref{sec:Coulomb} where the two different criteria \eqref{eq:PS-Cin1} and \eqref{eq:PS-Cinr} are treated separately as well as propagation (point $(a)$) and appearance (point $(b)$) of $L^{p}$-norms.
 
Section \ref{sec:prodi} gives the full proof of Theorem \ref{827} with again the various cases $(a)$, $(b)$ under conditions \eqref{eq:PS-Cing1} or \eqref{eq:PS-Cingr}  treated separately. In particular, the full proof of the $\e$-Poincar\'e Prop. \ref{prop:GG} is given in Section \ref{sec:eps}.

 The stability and uniqueness of solutions to the Landau equation is discussed in Section \ref{sec:unique}, which culminates with the proof of Theorem \ref{theo:unique}.

 A final Section \ref{sec:comm} is devoted to additional features of the Landau equation. Namely, we show in subsection \ref{sec:moderate} how the Prodi-Serrin criteria in Theorem \ref{827} applies to the case of moderate soft potentials $-2 \leq \g <0.$ It is well-known that, for such potentials, propagation/appearance of $L^{p}$-norms occurs as well as the uniqueness and regularity  of the solution (see \cite{ALL,Wu,ABL}) but we believe that Theorem \ref{827} sheds a new light on this case, showing that Prodi-Serrin criteria \eqref{eq:PS-Cing} are met in this case. We also discuss the end-point case $r=\infty$, $q=\frac{d}{d+\g+2}$ of the Prodi-Serrin criterion \eqref{eq:PS-Cinr}--\eqref{eq:PS-Cingr} in subsection \ref{sec:endpoint}, showing an analogue of the end-point case $r=\infty,q=3$ in Theorem \ref{theo:PSNS} for Navier-Stokes equations. We point out here that the role of viscosity $\bm{\nu}$ in Theorem \ref{theo:PSNS} is played by the coercivity constant $K_{0}$ in Remark \ref{diffusion} (see Theorem \ref{sec:endpoint} for details). 

 In Appendix \ref{sec:known} we recall some known results about weak solutions to the Landau equation. Some of them are found in the literature only in dimension $d=3$, so that we also give  some sketches of proof to extend them to arbitrary dimension $d \geq 3$. 

Finally,  Appendix \ref{sec:weight} gives some technical results regarding the evolution of weighted $L^{p}$-norms which are useful for the proof of the uniqueness result Theorem \ref{theo:unique}.

\subsection*{Acknowledgements} B.L. gratefully acknowledges the financial support from the Italian Ministry of Education, University and Research (MIUR), Dipartimenti di Eccellenza grant 2022-2027
as well as the support from the de Castro Statistics Initiative, Collegio Carlo Alberto (Torino).

\medskip

\section{Prodi-Serrin like criteria for Coulomb interactions}\label{sec:Coulomb}

 In this section, we focus on the Landau equation for charged particles associated to Coulomb interactions. In this situation, we recall from \eqref{eq:cgCoul}--\eqref{LFD} that the equation writes
\begin{equation}\label{LFD_c}
\left\{
\begin{array}{ccl}
\;\partial_{t} f &= &\grad \cdot \big(\,\mathcal{A}[f]\, \grad f
- \bm{b}[f]\, f\big)=\mathrm{Tr}\left(\mathcal{A}[f] D^{2}f\right) +c_{d}f^{2} , \medskip\\
\;f(t=0)&=&f_{\mathrm{\rm in}}\, ,
\end{array}\right.
\end{equation}
where $D^{2}f$ is the Hessian matrix of $f$ and $c_{d}:=(d-1)(d-2)|\S^{d-1}|.$

We start with a simple proposition which shows the propagation of $L^p$ norms (including the case when $p=\infty$) under one of the end point of Prodi-Serrin condition:
\begin{prop}\label{without_dissip} 
 Let $f_{\rm in}$ and $f=f(t,v)$ define a  solution to Eq. \eqref{LFD_c} in the sense of Definition \ref{defi:weak}. 
Assume that
$$f \in L^{1}([0,T];L^{\infty}(\R^{d}))$$
for some $T >0$, and $f_{\rm in} \in L^{p}(\R^{d})$
 for some $p\in [1,\infty]$. 
Then $f$ also lies in $L^{\infty}([0,T]; L^p(\R^d))$.
More precisely, the following estimate holds:
$$ \|f\|_{L^{\infty}([0,T], L^p(\R^d))} \le \|f_{\rm in}\|_{p } \,\, \exp \bigg( c_{d} \,  \int_{0}^{T}\|f(t)\|_{\infty}\d t\bigg)\,.$$
\end{prop}

\begin{proof}
Using the
multiplicator $f^{p-1}$ for $p \in [1, \infty[$ in eq. (\ref{LFD_c}), we see that, using the nonnegativity of the matrix $\mathcal{A}[f]$, and the identity  {$-\nabla\cdot \bm{b}[f] = \bm{c}[f] =  {c_{d}}\, f$}, 
\begin{equation*}\begin{split}
\frac1p \frac{\d}{\d t} \int_{\R^d}  f^p \d v&= - (p-1)  \int_{\R^d}  f^{p-2} \nabla f \cdot  \mathcal{A}[f] \, \nabla f \d v+ (p-1) \int_{\R^d}  f^{p-1}  \, \bm{b}[f]\cdot \nabla f \d v\\ 
&\le c_{d}\frac{ (p - 1)}{p} \int_{\R^d}  f^{p+1}  \d v \le c_{d}\frac{(p - 1)}{p} \|f\|_{ \infty } \int_{\R^d} f^p \d v\,.
\end{split}\end{equation*}
Thanks to Gr\"onwall's lemma, we end up with, for $t\in [0,T]$
$$  \int_{\R^d}   f^p (t,v)\,\d v \le \int_{\R^d}   f^p_{\rm in}(v)\,\d v \,\, \exp \bigg( c_{d} (p - 1)\,  \int_0^t \|f(s)\|_{ \infty }\,\d s  \bigg), \qquad \forall t \in [0,T] $$
and consequently
$$ \|f\|_{L^{\infty}([0,T], L^p(\R^d))} \le \|f_{\rm in}\|_{ p } \,\, \exp \bigg(c_{d} \,  \int_0^T \|f(s)\|_{ \infty }\,\d s  \bigg). $$
This concludes the proof when $p \in [1, \infty[$. Passing to the limit when $p \to \infty$ in the above estimate shows that the proposition also holds when $p=\infty$.
\end{proof}

We then show the appearance when $t>0$ of $L^p$ norms  for finite $1 < p < \infty$ (notice that the result  now excludes the case when $p=\infty$) under  one of the end point of Prodi-Serrin condition, assuming that the initial datum has some moments in $L^1$:

\begin{prop}\label{eq_with_dissip} Let $f_{\rm in}$ and $f=f(t,v)$ define a  solution to Eq. \eqref{LFD_c} in the sense of Definition \ref{defi:weak}.  Given $p \in (1,\infty)$, assume that  
$$f_{\rm in} \in L^{1}_{\nu_{p}}(\R^{d}), \qquad \nu_{p}= \frac{d^{2}}{2}\frac{p-1}{p}, $$
and  that 
 $$f \in L^{1}([0,T];L^{\infty}(\R^{d}))$$
for some $T >0$. Then, the solution $f=f(t,v)$ satisfies  the following estimate, for any $t \in (0,T]$:
\begin{equation}\label{eq:LpCp} \|f(t)\|_{p} \le  C_{p}  \bigg[ \sup_{\tau\in [0,T]} {\lm}_{\nu_{p}}(\tau)\bigg]\, t^{- \frac{d}{2} (1 - \frac1{p})}\,, \end{equation}
where 
$$C_{p}=C_p(T) := \bigg[ \frac{4\,K_0}{dpC_{\mathrm{Sob}}} \bigg]^{- \frac{d}{2} (1 - \frac1p)}  \,\exp\left(\frac{d^{2}}{p^{2}}K_0(p-1) \, T +  c_{d}\left(1-\frac{1}{p}\right)\int_{0}^{T}\left\|f(\tau)\right\|_{\infty}\d\tau  \right) ,  $$
$K_0$ is the constant (depending on $\varrho_{\rm in}$, $E_{\rm in}$ and $H(f_{\mathrm{\rm in}})$) appearing in Remark \ref{diffusion},
 and $C_{\mathrm{Sob}}$ is the constant appearing in the 
Sobolev embedding  $\dot{\H}^1(\R^d) \hookrightarrow L^{\frac{2d}{d-2}}(\R^d)$ as defined in \eqref{eq:sobo-p}.
\end{prop} 

\begin{rmq} Recall that, under the assumption $f_{\rm in} \in L^{1}_{\nu_{p}}(\R^{d})$, one has
$$\sup_{\tau\in [0,T]} {\lm}_{\nu_{p}}(\tau) \leq C_{d,p}(1+T), $$
where $C_{d,p}$ is a positive constant depending only on $d,p$ and $\lm_{\nu_{p}}(0)=\|f_{\rm in}\|_{L^{1}_{\nu_{p}}}$ (see Prop. \ref{cor:moments}). 
\end{rmq}

 \begin{proof}
Proceeding as in the proof of Prop. \ref{without_dissip}, we write down
\begin{equation}\label{nbiz}\begin{split} 
\frac1p \dfrac{\d}{\d t} \int_{\R^d}  f^p(t,v)\d v &= - (p-1)  \int_{\R^d}  f^{p-2} \nabla f \cdot  \mathcal{A}[f] \, \nabla f \d v+ (p-1) \int_{\R^d}  f^{p-1}  \, \bm{b}[f]\cdot \nabla f \d v\\
&=  - \frac{4\, (p-1)}{p^2}  \int_{\R^d}   \nabla f^{\frac{p}2}\cdot  \mathcal{A}[f] \,  \nabla f^{\frac{p}2}\d v
 + c_{d}\, \frac{p-1}{p}  \int_{\R^d}  f^{p+1} (t,v)\d v\,,
\end{split}\end{equation}
where we noticed that
$$ \nabla f^{\frac{p}2}\cdot  \mathcal{A}[f] \,  \nabla f^{\frac{p}2} =\frac{p^{2}}{4}f^{p-2}\mathcal{A}[f]\nabla f\cdot \nabla f\,.$$
Using the uniform ellipticity of the diffusion matrix $\mathcal{A}[f]$ (recall Proposition \ref{diffusion}), we deduce that $$ \frac1p \dfrac{\d}{\d t} \int_{\R^d}  f^p(t,v)\d v
  + \frac{4\,K_0\, (p-1)}{p^2}  \int_{\R^d} \langle \cdot \rangle^{-d} \left | \nabla f^{\frac{p}2}(t,v)\right|^2 
\le c_{d}\, \frac{p-1}p  \int_{\R^d}  f^{p+1}(t,v)\d v\,. $$
Multiplying this inequality by $p$ and  using the elementary inequality 
$$ \left | \nabla\left[ \langle \cdot \rangle^{-\frac{d}{2}}  f^{\frac{p}2} \right]\right|^2 \le 2   \langle \cdot \rangle^{-d} \,\left|\nabla  f^{\frac{p}2}\right|^2 + \frac{d^{2}}{2}\, \langle \cdot \rangle^{-d-2}  \, f^p , $$ this turns into
\begin{multline}\label{bb}
 \dfrac{\d}{\d t} \int_{\R^d}  f^p(t,v)\d v
  + \frac{2\,K_0\, (p-1)}{p}  \int_{\R^d}  \left| \nabla \left[  \langle v \rangle^{-\frac{d}{2}} \,f^{\frac{p}{2}}(t,v)\right]\right|^2\d v\\ 
\le c_{d}\, (p-1)  \int_{\R^d}  f^{p+1}(t,v)\d v +    \frac{d^{2}\,K_0\, (p-1)}{p}  \int_{\R^d} \langle  v \rangle^{-d-2}  \, f^p(t,v)\d v . 
\end{multline}
At this point, we combine Sobolev's inequality with a suitable interpolation inequality with weights to conclude. Namely, recall  that Sobolev's inequality reads (with $C_{\mathrm{Sob}}$ the best constant in the Sobolev embedding $\dot{\H}^1(\R^d) \hookrightarrow L^{\frac{2d}{d-2}}(\R^d)$)
\begin{equation}\label{eq:sobo-p}
\left\|\langle \cdot\rangle^{-\frac{d}{p}}f\right\|_{\frac{pd}{d-2}}^{p}=\left\|\langle \cdot\rangle^{-\frac{d}{2}}f^{\frac{p}{2}}\right\|_{{\frac{2d}{d-2}}}^{2} \leq C_{\mathrm{Sob}}\, \int_{\R^{d}}\left|\nabla \left[ \langle v\rangle^{-\frac{d}{2}}f^{\frac{p}{2}}\right]\right|^{2}\d v\,. \end{equation}
Now, we use the standard H\"older interpolation inequality (with weights), which holds for all functions $g$ for which the norms below are defined:
\begin{equation}\label{int-ineqp_bis}
\|\langle \cdot \rangle^{a}g\|_{{r}} \leq \|\langle \cdot \rangle^{a_{1}}g\|_{{r_{1}}}^{\theta}\,\|\langle \cdot \rangle^{a_{2}}g\|_{{r_{2}}}^{1-\theta}\,,
\end{equation}
with $r,r_1,r_2 \ge1$, $a,a_1, a_2 \in \R$,
$$\frac{1}{r}=\frac{\theta}{r_{1}}+\frac{1-\theta}{r_{2}}, \quad a=\theta\,a_{1}+(1-\theta)a_{2},  \quad \theta \in (0,1).$$
Using inequality (\ref{int-ineqp_bis}) for $g := f(t,\cdot)$ with
$$ a :=0, \:\:a_1 :=\nu_{p},\:\: a_2 := -\frac{d}{p}, \qquad r :=p,\:\: r_1 :=1, \:\:r_2 :=\frac{dp}{d-2}, \qquad \theta := \frac2{dp+2-d}, $$ 
we get
\begin{equation*}\begin{split}
\int_{\R^{d}}f^{p}(t,v)\d v &\leq \lm_{a_{1}}(t)^{p\theta}\,\left\|\langle \cdot \rangle^{-\frac{d}{p}}f(t,\cdot)\right\|_{\frac{dp}{d-2}}^{p(1-\theta)}\\
&\leq \lm_{a_{1}}(t)^{p\theta}C_{\mathrm{Sob}}^{1-\theta}\,\bigg(\int_{\R^{d}}\left|\nabla \left[\langle v\rangle^{-\frac{d}{2}}f^{\frac{p}{2}}(t,v)\right]\right|^{2}\d v\bigg)^{1-\theta},
\end{split}\end{equation*}
where we used \eqref{eq:sobo-p} in the last inequality. Recalling now the expression of $a_{1}$ and $\theta$, we use this estimate under the form
\begin{equation}\label{usedt}
   \int_{\R^{d}} \left|\nabla \left[\langle v\rangle^{-\frac{d}{2}}f^{\frac{p}{2}}(t,v)\right]\right|^{2}\d v  \ge   C_{\mathrm{Sob}}^{-1}\, \lm_{ \nu_{p} }(t)^{-\frac{2p}{d(p-1)}}\, \bigg( \int_{\R^d}  f^p(t,v)\d v \bigg)^{1+\frac{2}{d(p-1)}}\,. 
\end{equation}
Plugging this in \eqref{bb}, we end up with
\begin{multline*}
 \dfrac{\d}{\d t} \int_{\R^d}  f^p(t,v)\d v  + \frac{2\,K_0\, (p-1)}{p\,C_{\mathrm{Sob}} }\, \lm_{ \nu_{p} }(t)^{-\frac{2p}{d(p-1)}}\, \bigg( \int_{\R^d}  f^p(t,v)\d v \bigg)^{1+\frac{2}{d(p-1)}} \\
 \leq  \bigg[ c_{d}\, (p-1)  \|f(t)\|_{L^{\infty}(\R^d)} +    \frac{d^{2}\,K_0\, (p-1)}{p} \bigg]\,  \int_{\R^d}  \, f^p(t,v)\d v\,,
 \end{multline*}
after having dropped the weight $ \langle \cdot \rangle^{-d-2} $. Defining 
$$ y(t) :=  \left(\int_{\R^d}  \, f^p(t,v)\, \d v \right)\, \exp\bigg( -(p-1) \int_0^t \bigg[c_{d}  \|f(s)\|_{L^{\infty}(\R^d)} +    \frac{d^{2}\,K_0}{p} \bigg] \, \d s  \bigg) , $$
we see that $y(t)$ satisfies the differential inequality 
$$ y'(t) \le - \frac{2\,K_0\, (p-1)}{p\,C_{\mathrm{Sob}} }\, \lm_{ \nu_{p} }(t)^{-\frac{2p}{d(p-1)}}\,  \, y(t)^{1 + \frac{2}{dp-d}} , $$
which can be solved (for $t \in [0,T]$) as 
$$ y(t) \le \bigg[ \frac{4\,K_0}{dp\,C_{\mathrm{Sob}}}   \bigg(\sup_{\tau \in [0,T]} {\lm}_{ \nu_{p} }(\tau) \bigg)^{-\frac{2p}{d(p-1)}}  \, t \bigg]^{- \frac{d(p-1)}2} . $$
Recalling the definition of $y$, we get the desired estimate.
\end{proof}

\begin{rmq} 
Following the dependency with respect to $p$ in the proof, we can prove that, if $f_{\rm in} \in L^{1}_{\frac{d^{2}}{2}}(\R^{d})$ then, for all $t_{*}, T>0$ and $\kappa>0$ small enough, the following estimate holds:
$$ \sup_{t \in [t_*, T]}\,  \int_{\R^d} \exp\left(\kappa f(t,v)^{\frac2d} \right)\,\d v \le C, $$
 where $C>0$ is a constant
depending only on $t_*, T$, $\varrho_{\rm in}$, $E_{\rm in}$, $H(f_{\mathrm{\rm in}})$, $d$, $\|f_{in}\|_{L^1_{\frac{d^2}{2}}(\R^d)}$, and 
$\left\|f\right\|_{L^{1}([0,T]\,,\,L^{\infty}(\R^{d}))}$.
\end{rmq}

 We now turn to the general case, in which the Prodi-Serrin condition appears as a weighted $L^r_{t}(L^q_{v})$ norm assumed to be finite. More precisely, we write down the

\begin{prop}\label{mps_coul}
 Let $f_{\rm in}$ and $f=f(t,v)$ define a  solution to Eq. \eqref{LFD_c} in the sense of Definition \ref{defi:weak}. 
Assume that 
\begin{equation}\label{eq:PS-Coul}\langle \cdot \rangle^{d}\,f \in L^{r}([0,T]\,;\,L^{q}(\R^{d})) \:\: \text{ with }\:\: \frac{2}{r}+\frac{d}{q}=2\end{equation}
for some $T >0$, where $r \in (1,\infty)$, $q \in (\frac{d}{2},\infty)$. Given $p \in (1,\infty)$, assume that
$$f_{\rm in} \in L^{1}_{\nu_{p}}(\R^{d}), \qquad \nu_{p} :=\frac{d^{2}}{2}\left(1-\frac{1}{p}\right), $$
then the solution $f=f(t,v)$ satisfies  the following estimate, for any $t \in (0,T]$:

\begin{equation}\label{Lp-Kpq0}
\|f(t)\|_{p} \le K_{p,q} \,\left[\sup_{\tau\in [0,T]} {\lm}_{ \nu_{p}}(\tau)\right]\, t^{- \frac{d}{2} (1 - \frac1{p})} ,\end{equation}
where
$$K_{p,q}= K_{p,q}(T) := \bigg[ \frac{2K_0}{dp\,C_{\mathrm{Sob}} }\ \bigg]^{- \frac{d}{2}\,(1 - \frac1p)} \,
\exp\bigg(\frac{d^{2}}{p^{2}}K_0(p-1) \,T+ \frac1p\, C_{p,q}\,\int_{0}^{T}\left\|\langle \cdot\rangle^{|\g|}f(\tau)\right\|_{q}^{r}\d\tau \bigg)\,,$$
\begin{equation}\label{eq:Cpq} C_{p,q} := \left(1 - \frac{d}{2q}\right)\, \left[c_{d}\, (p-1) \, C_{\mathrm{Sob},q}\right]^{\frac{2q}{2q-d}}\, \left(\frac{K_0\, (p-1)}{p} \frac{2q}d \right)^{\frac{-d}{2q-d}},
\end{equation}
  $K_0$ is the constant (depending on $\varrho_{\rm in}$, $E_{\rm in}$ and $H(f_{\mathrm{\rm in}})$) appearing in Remark \ref{diffusion}, 
 $C_{Sob}$ is the Sobolev constant defined in (\ref{eq:sobo-p}),
and
$ C_{\mathrm{Sob},q}$ is the Sobolev constant relative to the embedding 
 $\dot{\H}^{\frac{d}{2q} }(\R^d) \hookrightarrow L^{\frac{2q}{q-1}}(\R^d)$ .
\end{prop}

\begin{proof}
We start from inequality \eqref{bb} obtained in the proof  of Prop. \ref{eq_with_dissip} and we estimate the term
$$\int_{\R^{d}}f^{p+1}(t,v)\d v=\int_{\R^{d}}\left[\langle v\rangle^{d}f(t,v)\right]\,\left[\langle v \rangle^{-\frac{d}{2}}f^{\frac{p}{2}}(t,v)\right]^{2}\d v$$
thanks to H\"older's inequality. Denoting  $q'=\frac{q}{q-1}$, where $q$ is the exponent in the Prodi-Serrin condition, we obtain
$$\int_{\R^{d}}f^{p+1}(t,v)\d v \leq \,\left\|\langle \cdot\rangle^{d}f(t)\right\|_{q}\,\left\|\langle \cdot \rangle^{-\frac{d}{2}} \,f^{\frac{p}{2}}(t)\right\|_{2q'}^{2}\,.$$
Using now the inequality $\|\cdot\|_{2q'}^2 \leq C_{\mathrm{Sob},q}\|\cdot\|_{\dot{\H}^{\frac{d}{2q}}}^2$ resulting from the Sobolev embedding  $\dot{\H}^{\frac{d}{2q} }(\R^d) \hookrightarrow L^{\frac{2q}{q-1}}(\R^d)$ ) (see \cite{chemin}, Theorem 1.38),
 we first deduce that
$$\int_{\R^{d}}f^{p+1}(t,v)\d v \leq C_{\mathrm{Sob},q}\left\|\langle \cdot\rangle^{d}f(t)\right\|_{q}\,\left\|\langle \cdot \rangle^{-\frac{d}{2}} \,f^{\frac{p}{2}}(t)\right\|_{\dot{\H}^{\frac{d}{2q}}}^{2}\,.$$
Observing now that, since $q > \frac{d}{2}$, $0 < \frac{d}{2q} < 1$, we can invoke   the  interpolation inequality $\dot{\H}^1(\R^d) \cap  L^2(\R^d) \subset \dot{\H}^{\frac{d}{2q} }(\R^d)$ (see \cite{chemin}, Proposition 1.32) to deduce now that
$$\left\|\langle \cdot \rangle^{-\frac{d}{2}} \,f^{\frac{p}{2}}(t)\right\|_{\dot{\H}^{\frac{d}{2q}}}^{2}\leq \left\|\langle \cdot \rangle^{-\frac{d}{2}} \,f^{\frac{p}{2}}(t)\right\|_{2}^{2-\frac{d}{q}}\,\left\|\nabla \left[\langle \cdot \rangle^{-\frac{d}{2}} \,f^{\frac{p}{2}}(t)\right]\right\|_{2}^{\frac{d}{q}}\,.$$
Plugging this in inequality \eqref{bb}, we obtain
\begin{multline}\label{bb_bis}
 \dfrac{\d}{\d t} \int_{\R^d}  f^p(t,v)\d v
  + \frac{2\,K_0\, (p-1)}{p}  \int_{\R^d}  \left| \nabla \left[  \langle v \rangle^{-\frac{d}{2}} \,f^{\frac{p}{2}}(t,v)\right]\right|^2\d v\\ 
\le c_{d}\,(p-1)\,C_{\mathrm{Sob},q}\left\|\langle \cdot\rangle^{d}f(t)\right\|_{q}\,\left\|\langle \cdot \rangle^{-\frac{d}{2}} \,f^{\frac{p}{2}}(t)\right\|_{2}^{2-\frac{d}{q}}\,\left\|\nabla \left[\langle \cdot \rangle^{-\frac{d}{2}} \,f^{\frac{p}{2}}(t)\right]\right\|_{2}^{\frac{d}{q}}\\
+    \frac{d^{2}\,K_0\, (p-1)}{p}  \int_{\R^d}\langle v\rangle^{-d-2} f^p(t,v)\d v \,.
\end{multline}
We now resort to Young's inequality (for $x,y \ge 0, \, \e>0$) in the following form:
$$ x\,y \le  \left(\frac{2q}d \right)^{-1}\, (\e x)^{\frac{2q}d} + \left(\frac{2q}{2q-d} \right)^{-1}\, \left(\frac{y}{\e}\right)^{\frac{2q}{2q-d}}, $$
with the choice
$$ x:=\left\|\nabla \left[\langle \cdot \rangle^{-\frac{d}{2}} \,f^{\frac{p}{2}}(t)\right]\right\|_{2}^{\frac{d}{q}}, \qquad y :=  c_{d}\,(p-1)\,C_{\mathrm{Sob},q}\left\|\langle \cdot\rangle^{d}f(t)\right\|_{q}\,\left\|\langle \cdot \rangle^{-\frac{d}{2}} \,f^{\frac{p}{2}}(t)\right\|_{2}^{2-\frac{d}{q}}, $$
and 
$$ \e := \bigg[\frac{K_0\, (p-1)}{p} \frac{2q}d \bigg]^{\frac{d}{2q}}\,.$$
Inserting this into \eqref{bb_bis}, we obtain
\begin{multline*} 
 \dfrac{\d}{\d t} \int_{\R^d}  f^p(t,v)\d v
  + \frac{K_0\, (p-1)}{p}  \int_{\R^d}  \left| \nabla \left[  \langle v \rangle^{-\frac{d}{2}} \,f^{\frac{p}{2}}(t,v)\right]\right|^2\d v\\ 
\le \left(1-\frac{d}{2q}\right)\left[c_{d}\,(p-1)\,C_{\mathrm{Sob},q}\e^{-1}\right]^{\frac{2q}{2q-d}}\left\|\langle \cdot\rangle^{d}f(t)\right\|_{q}^{\frac{2q}{2q-d}}\,\left\|\langle \cdot \rangle^{-\frac{d}{2}} \,f^{\frac{p}{2}}(t)\right\|_{2}^{2}\\+    \frac{d^{2}\,K_0\, (p-1)}{p}  \int_{\R^d}\langle v\rangle^{-d-2} f^p(t,v)\d v \,.
\end{multline*}
Observing that $\left\|\langle \cdot \rangle^{-\frac{d}{2}} \,f^{\frac{p}{2}}(t)\right\|_{2}^{2}=\displaystyle\int_{\R^{d}}\langle v\rangle^{-d}f^{p}(t,v)\d v$, we can deduce from  the above that
\begin{multline}\label{eq:LpCoul}
 \dfrac{\d}{\d t} \int_{\R^d}  f^p(t,v)\d v
  + \frac{K_0\, (p-1)}{p}  \int_{\R^d}  \left| \nabla \left[  \langle v \rangle^{-\frac{d}{2}} \,f^{\frac{p}{2}}(t,v)\right]\right|^2\d v\\ 
\le \bigg[C_{p,q}\, \|\langle \cdot\rangle^{d}\, f(t)\|_{q}^{\frac{2q}{2q-d}}+\frac{d^{2}\,K_0\, (p-1)}{p}\bigg]\int_{\R^{d}}\langle v\rangle^{-d}f^{p}(t,v)\d v,\end{multline}
where
\begin{equation} C_{p,q} := \left(1 - \frac{d}{2q}\right)\, \left[c_{d}\, (p-1) \, C_{\mathrm{Sob},q}\right]^{\frac{2q}{2q-d}}\, \left(\frac{K_0\, (p-1)}{p} \frac{2q}d \right)^{\frac{-d}{2q-d}}. \end{equation}
We now plug estimate \eqref{usedt} in this inequality to deduce (neglecting the negative weight in the last integral):
\begin{multline*}
 \dfrac{\d}{\d t} \int_{\R^d}  f^p(t,v)\d v  + \frac{K_0\, (p-1)}{p\,C_{\mathrm{Sob}} }\, \lm_{ \nu_{p} }(t)^{-\frac{2p}{d(p-1)}}\, \bigg( \int_{\R^d}  f^p(t,v)\d v \bigg)^{1+\frac{2}{d(p-1)}} \\
 \leq \bigg[C_{p,q}\, \|\langle \cdot\rangle^{d}\, f(t)\|_{q}^{\frac{2q}{2q-d}}+\frac{d^{2}\,K_0\, (p-1)}{p}\bigg]\int_{\R^{d}} f^{p}(t,v)\d v.\end{multline*}
 We now proceed like at the end of the proof of Prop. \ref{eq_with_dissip}, introducing now
$$ y(t) := \int_{\R^d} \, f^p(t,v)\,\d v \, \exp\bigg( - \int_0^t  \bigg[C_{p,q}\, \|\langle \cdot\rangle^{d}\, f(s,\cdot) \|_{q}^{\frac{2q}{2q-d}} + \frac{d^{2}\,K_0\, (p-1)}{p} \bigg]\, \d s \bigg), $$
and check that
$$ y'(t) \le - \frac{K_0\, (p-1)}{p\,C_{\mathrm{Sob}} }\, \lm_{ \nu_{p} }(t)^{-\frac{2p}{d(p-1)}} \,y(t)^{1+ \frac2{d(p-1)}} , $$
so that (for $t \in [0,T]$) 
$$ y(t) \le \bigg[  \frac{2K_0}{dp\,C_{\mathrm{Sob}} }\, \bigg(\sup_{\tau\in [0,T]}{\lm}_{\nu_{p}}(\tau) \bigg)^{-\frac{2p}{d(p-1)}}  \, t \bigg]^{- \frac{d(p-1)}2} . $$
Recalling the definition of $y$, we get estimate \eqref{Lp-Kpq0} after observing that  $r = \frac{2q}{2q-d}$.
\end{proof}


\begin{rmq} In both Proposition \ref{eq_with_dissip} and Proposition \ref{mps_coul}, whenever $p \le \frac{d^{2}}{d^{2}-4}$, we observe that $\frac{d^{2}}{2}\left(1 - \frac{1}p\right)\le 2$, so that no extra moment (beyond the kinetic energy) for the initial datum is required in this case. \end{rmq}

One can also deduce from the proof above the following propagation result:

\begin{cor} \label{cor:LinftyLP}
 Let $f_{\rm in}$ and $f=f(t,v)$ define a  solution to Eq. \eqref{LFD_c} in the sense of Definition \ref{defi:weak}. 
Assume that  \eqref{eq:PS-Coul} holds for some $T >0$, where $r \in (1,\infty)$, $q \in (\frac{d}{2},\infty)$. Given $p \in (1,\infty)$, assume that
$f_{\rm in} \in  L^{p}(\R^{d})$.
Then, the solution $f=f(t,v)$
satisfies the estimate 
$$\|f\|_{L^{\infty}([0,T]\,;\,L^{p}(\R^{d}))} \leq \|f_{\rm in}\|_{p}\,
\exp\left(\frac{C_{p,q}}{p}\int_{0}^{T}  {\|\langle \cdot \rangle^d\, f(t)\|_{q}^{r} } \, \d t+ \frac{d^{2}\,K_0\, (p-1)}{p^{2}}T\right), $$
where $C_{p,q}$ is defined in \eqref{eq:Cpq}.
\end{cor}

\begin{proof}  {We start from inequality \eqref{eq:LpCoul} for the evolution of the $L^{p}(\R^{d})$-norm of $f$ (note that this inequality does not use the assumption that $L^1$-moments are initially finite).} Then, dropping 
the weight $\langle \cdot \rangle^{-d}$ and the nonnegative term involving the integral of the gradient on the right-hand-side, we deduce that
$$ \dfrac{\d}{\d t} \int_{\R^d}  f^p(t,v)\d v \leq  \bigg[C_{p,q}\, \|\langle \cdot\rangle^{d}\, f(t, \cdot)\|_{q}^{r}+\frac{d^{2}\,K_0\, (p-1)}{p}\bigg]\int_{\R^{d}}f^{p}(t,v)\d v, $$
where $C_{p,q}$ is given by \eqref{eq:Cpq}. According to Gronwall's Lemma, one deduces that, for all $ t \in [0,T]$,
$$
 \int_{\R^d}  f^p(t,v)\d v \leq \left( \int_{\R^{d}}f_{\rm in}^{p}(v)\d v \right)\exp\left(\int_{0}^{t} \bigg[C_{p,q}\, \|\langle \cdot\rangle^{d}\, f(\tau)\|_{q}^{r}+\frac{d^{2}\,K_0\, (p-1)}{p}\bigg]\d\tau\right),$$
 from which the result easily follows. 
 \end{proof}

\begin{proof} of Theorem \ref{theo:Coul}:  It is a direct consequence of  Proposition \ref{without_dissip} and Proposition \ref{eq_with_dissip}  when  the $L^{1}_{t}(L^{\infty}_{v})$ condition is considered, and of  Proposition \ref{mps_coul} and Corollary \ref{cor:LinftyLP} when the $L^{r}_{t}(L^{q}_{v})$ condition is considered.
\end{proof}

\section{Prodi-Serrin like criteria for soft potentials $-d < \g <0$} \label{sec:prodi}
We present here an analysis, similar to the one of the previous section, for the Landau equation with soft potentials $-d < \g <0.$ The main difference between the Coulomb case treated in Section \ref{sec:Coulomb} and the present one is the nature of the lower order term 
$\bm{c}_{\g}[f]$ which is now a convolution operator. The analysis of this term requires then the specific use of the Hardy-Littlewood-Sobolev inequality which we recalls here for the sake of completeness:

\begin{prop}[\textit{\textbf{Hardy-Littlewood-Sobolev inequality}}] \label{prop:HLS}
Let $d\in \N$, $d\ge 1$, $1 < r ,p < \infty$ and $0 < \lambda < d$ with
$$\frac{1}{p}+\frac{\lambda}{d}+\frac{1}{r}=2.$$
Then there exists $C_{\rm{HLS}}>0$ (depending on $d,p,\lambda$) such that  the estimate
\begin{equation}\label{eq:HLS}
\int_{\R^{2d}}g(x)|x-y|^{-\lambda}h(y)\d x \d y \leq C_{\rm{HLS}}\, \|g\|_{ {p}}\, \|h\|_{ {r}} 
\end{equation}
holds for any smooth $g,h\::\:\R^{d}\to \R$.

 \end{prop}
 
 \subsection{Appearance of $L^{p}$-norms - the $L^{1}_{t}(L^{\frac{d}{d+\g}})$ case}
 
As in the Coulomb case, we are already in position to prove the propagation of $L^p$ norms  for $p < \frac{d}{d+\g}$ under one of the end point of Prodi-Serrin condition:
\begin{prop}\label{without_dissip-g}
{We consider $d\in \N$, $d \ge 3$ and 
 $\g \in (-d, 0)$.}
 Let $f_{\rm in}$ and $f=f(t,v)$ define a  solution to Eq. \eqref{LFD_c} in the sense of Definition \ref{defi:weak}. 
Assume that
\begin{equation}\label{eq:L1-criterion}
f  \in L^{1}\left([0,T]\;;\;L^{\frac{d}{d+\g}}(\R^{d})\right),  
\end{equation}
for some $T >0$, and $f_{\rm in} \in L^{p}(\R^{d})$
 for some $p\in (1,\frac{d}{d+\g})$. 
Then
 $f$ also lies in $L^{\infty}([0,T]; L^p(\R^d))$.
More precisely, the following estimate holds:
$$ \|f\|_{L^{\infty}([0,T], L^p(\R^d))} \le \|f_{\rm in}\|_{p } \,\, \exp \bigg(C_{d,\g,p}\,  \int_{0}^{T}\|f(t)\|_{\frac{d}{d+\g}}\d t\bigg)\,,$$
where $C_{d,\g,p} >0$ is 
an explicit constant depending only on $d,\g,p$.
\end{prop}

\begin{proof} Since only propagation of $L^{p}$-norms are involved here, we simply notice that
\begin{equation}\label{eq:Mp}\begin{split}
\frac1p \frac{\d}{\d t} \int_{\R^d}  f^p \d v&= - (p-1)  \int_{\R^d}  f^{p-2} \nabla f \cdot  \mathcal{A}[f] \, \nabla f \d v+ (p-1) \int_{\R^d}  f^{p-1}  \, \bm{b}[f]\cdot \nabla f \d v\\
&=-(p-1)\int_{\R^d}  f^{p-2} \nabla f \cdot  \mathcal{A}[f] \, \nabla f \d v+\frac{p-1}{p}\int_{\R^{d}}  f^p \bm{c}_{\g}[f]\, \d v\, ,
\end{split}\end{equation}
so
that
$$\dfrac{\d}{\d t}\int_{\R^{d}}f^{p}(t,v)\d v \leq  { (p-1)\int_{\R^{d}}\bm{c}_{\g}[f(t)](v)f^{p}(t,v)\d v} , $$
and we only need to estimate this last integral. Using Hardy-Littlewood-Sobolev inequality \eqref{eq:HLS} with $g= f(t,\cdot) \in L^{p}$, $h=f^{p}(t,\cdot)$ and $\lambda = - \gamma$,  we observe that $p < \frac{d}{d+\g}$ implies 
$$\frac{1}{r}=2+\frac{\g}{d}-\frac{1}{p} < 1,$$
and conclude that
\begin{equation*}\begin{split}
 { \int_{\R^{d}} {\bm{c}}_{\g}[f(t)](v)f^{p}(t,v)\d v } &\leq C_{d,\g,p}\|f(t)\|_{{p}}
\|f^{p}(t)\|_{{r}}=C_{d,\g,p}\|f(t)\|_{{p}}\|f(t)\|_{{pr}}^{p},
\end{split}\end{equation*}
where $C_{d,\g,p} :=(d-1)(d+\g)\,C_{\rm{HLS}}$ is depending only on 
$d,\g,p$. Writing $q_{0}=\frac{d}{d+\g}$ and recalling that $p < \frac{d}{d+\g}$, we use an interpolation based on H\"older's inequality, namely,
$$
\|f(t)\|_{{pr}}^{p} \leq \| f(t)\|_{{p}}^{p\theta}\| f(t)\|_{{q_{0}}}^{p(1-\theta)},
$$
with 
$$\frac{1}{pr}=\frac{\theta}{p}+\frac{1-\theta}{q_{0}}, \qquad \theta=\frac{q_{0}-pr}{r(q_{0}-p)}\,.$$
Observing that
$\frac{1}{r}=\frac{1}{q_{0}}+1-\frac{1}{p},$
one actually has 
$\theta=1-\frac{1}{p},$ so that $p\,(1-\theta)=1$ and
\begin{equation}\label{eq:CgLambda0}
 {\int_{\R^{d}}{\bm{c}}_{\g}[f(t)](v)f^{p}(t,v)\d v } \leq C_{d,\g,p}\, \| f(t)\|_{{p}}^{p}\| f(t)\|_{{q_{0}}}={\Lambda}_{0}(t)\int_{\R^{d}}f^{p}(t,v)\d v,\end{equation}
with 
$$ {\Lambda}_{0}(t) := C_{d,\g,p}\,\|f(t)\|_{{q_{0}}} \in L^{1}([0,T])$$
by assumption. Thanks to Gr\"onwall's lemma, we obtain that, for $t\in [0,T]$,
$$  \int_{\R^d}   f^p (t,v)\,\d v \le \left(\int_{\R^d}   f^p_{\rm in}(v)\,\d v \right)\,\, \exp \bigg( C_{d,\g,p} (p - 1)\,  \int_0^t \|f(s)\|_{q_{0}}\,\d s  \bigg), \qquad \forall t \in [0,T] $$
which gives the desired result.
\end{proof}
In order to show the \emph{appearance} of $L^{p}$-norms, we need to investigate more carefully the evolution of  $L^{p}$-norms for solutions to the Landau equation \eqref{LFD}.
We use in the sequel the notation, for $k\in \R$ and $p \in (1,\infty)$,
$$
\lM_{k,p}(t) :=\int_{\R^{d}}f(t,v)^{p}\langle v\rangle^{k}\d v, \qquad \lD_{{k}, p}(t) :=\int_{\R^{d}}\left|\nabla \left(\langle v\rangle^{\frac{k}{2}}f^{\frac{p}{2}}(t,v)\right)\right|^{2}\d v\,,
$$
together with the shorthand notation $\lM_{p}(t) :=\lM_{0,p}(t)$.

 \begin{lem}\label{prop:estMs-g}
We consider $d\in \N$, $d \ge 3$ and 
 $\g \in [-d, 0)$, and assume that $f_{\rm in}$ and $f=f(t,v)$ define a  solution to Eq. \eqref{eq:Landau} in the sense of Definition \ref{defi:weak}.  Then for all $p \in (1,\infty)$, if \begin{equation}\label{eq:nuk}
f_{\rm in} \in L^{1}_{\nu_{\g,p}}(\R^{d}), \qquad \nu_{\g,p}:= \frac{|\g| d}{2}\left(1-\frac{1}{p}\right)\,,
\end{equation}
it holds
\begin{multline}\label{eq:LMp}
\dfrac{\d}{\d t}\lM_{p}(t) + \frac{2K(p)}{C_{\mathrm{Sob}}}\lm_{\nu_{\g,p}}(t)^{-\frac{2p}{d(p-1)}}\lM_{p}(t)^{1+\frac{2}{d(p-1)}} \leq K(p)\g^{2}  {\lM_{p}(t)}\\
+(p-1) \int_{\R^{d}} \bm{c}_{\g}[f(t)](v)\,f^p(t,v) \d v,
\end{multline}
where $K(p) :=\frac{p-1}{p}K_{0}$, $K_0$ being the constant appearing in Remark \ref{diffusion} and $C_{\mathrm{Sob}}$ is the Sobolev constant appearing in the 
Sobolev embedding  $\dot{\H}^1(\R^d) \hookrightarrow L^{\frac{2d}{d-2}}(\R^d)$ (see \eqref{eq:sobo-p}). 
\end{lem}

\begin{proof} As in the proof of the previous proposition, we start with
\begin{equation}\begin{split}\label{ts37}
\frac1p \frac{\d}{\d t} \int_{\R^d}  f^p \d v&= - (p-1)  \int_{\R^d}  f^{p-2} \nabla f \cdot  \mathcal{A}[f] \, \nabla f \d v+ (p-1) \int_{\R^d}  f^{p-1}  \, \bm{b}[f]\cdot \nabla f \d v\\
&=-(p-1)\int_{\R^d}  f^{p-2} \nabla f \cdot  \mathcal{A}[f] \, \nabla f \d v+\frac{p-1}{p}\int_{\R^{d}}  f^p \bm{c}_{\g}[f]\, \d v\, ,
\end{split}\end{equation}
where we recall that $\nabla \cdot \bm{b}[f]=-\bm{c}_{\g}[f].$ Now, as previously observed,
$$(p-1)\int_{\R^{d}} f^{p-2}\mathcal{A}[f]\nabla f\cdot   \nabla f\d v 
 \geq \frac{4K_{0}(p-1)}{p^{2}} \int_{\R^{d}}\langle v\rangle^{\g}\,\left|\grad (f^{\frac{p}{2}})\right|^{2}\d v\,.$$
Moreover, writing
\begin{equation*}
\nabla \left(\langle v\rangle^{\frac{\g}{2}}\,f^{\frac{p}{2}}\right)=\langle v\rangle^{\frac{\g}{2}}\nabla (f^{\frac{p}{2}}) + \frac{\g}{2}v\,\langle v\rangle^{\frac{\g}{2}-2}f^{\frac{p}{2}}\,,
\end{equation*}
which implies
\begin{equation}\label{eq:Gradient}
\langle v\rangle^{\g}\left|\nabla (f^{\frac{p}{2}})\right|^{2} \geq \frac{1}{2}\left|\nabla \left(\langle v\rangle^{\frac{\g}{2}}f^{\frac{p}{2}}\right)\right|^{2} - \frac{\g^{2}}{4} \langle v\rangle^{\g-2}f^{p}, \end{equation}
we observe that
\begin{multline*}
(p-1)\int_{\R^{d}} f^{p-2}\mathcal{A}[f]\nabla f\cdot   \nabla f\d v 
 \geq \frac{2K_{0}(p-1)}{p^{2}} \int_{\R^{d}}\left|\nabla \left(\langle v\rangle^{\frac{\g}{2}}f^{\frac{p}{2}}\right)\right|^{2}\d v\\
 -\frac{K_{0}(p-1)\g^{2}}{p^{2}}\int_{\R^{d}} \langle v\rangle^{\g-2}f^{p}\d v\,.\end{multline*}
Inserting this  {into \eqref{ts37}}, we obtain
\begin{equation}\label{eq:dtMsp-g}
\dfrac{\d}{\d t}\lM_{p}(t) + 2K(p) {\lD_{\gamma, p}(t)} \leq K(p)\g^{2}\lM_{p}(t)\\
+ (p-1) \int_{\R^{d}} \bm{c}_{\g}[f(t)](v)\,f^p(t,v) \d v ,
\end{equation}
where we simply observe that $\langle v\rangle^{\g-2} \leq 1.$ 

Now, as in the proof of Proposition \ref{eq_with_dissip}, we combine a Sobolev inequality with a suitable interpolation inequality with weights to estimate ${\lD_{\gamma, p}(t)}$ in terms of $\lm_{\nu_{\g,p}}(t)$ and $\lM_{p}(t)$. Namely, one can reformulate the Sobolev embedding inequality \eqref{eq:sobo-p} as
\begin{equation}\label{eq:sobo-p-g}
\left\|\langle \cdot\rangle^{\frac{\g}{p}}f\right\|_{\frac{pd}{d-2}}^{p}=\left\|\langle \cdot\rangle^{\frac{\g}{2}}f^{\frac{p}{2}}\right\|_{{\frac{2d}{d-2}}}^{2} \leq C_{\mathrm{Sob}}\, \int_{\R^{d}}\left|\nabla \left[ \langle v\rangle^{\frac{\g}{2}}f^{\frac{p}{2}}\right]\right|^{2}\d v=C_{\mathrm{Sob}}{\lD_{\gamma, p}(t)}\,. \end{equation}
We now use the interpolation inequality \eqref{int-ineqp_bis} with the choice
$$a=0, \quad a_{1}=\nu_{\g,p}, \quad a_{2}=\frac{\g}{p}, \qquad r :=p,\:\: r_1 :=1, \:\:r_2 :=\frac{dp}{d-2}, \qquad \theta := \frac2{dp+2-d} . $$ 
We can reformulate \eqref{usedt} as
\begin{equation}\label{usedt-g}
   \int_{\R^{d}} \left|\nabla \left[\langle v\rangle^{\frac{\g}{2}}f^{\frac{p}{2}}(t,v)\right]\right|^{2}\d v  \ge   C_{\mathrm{Sob}}^{-1}\, \lm_{ \nu_{\g,p} }(t)^{-\frac{2p}{d(p-1)}}\, \bigg( \int_{\R^d}  f^p(t,v)\d v \bigg)^{1+\frac{2}{d(p-1)}}\,. 
\end{equation}
Plugging this into \eqref{eq:dtMsp-g}, we get the result.
\end{proof}

Thanks to the above evolution, we can now show the analogue of Prop. \ref{eq_with_dissip} and prove the \emph{appearance} of $L^{p}$-norms, for $p < \frac{d}{d+\g}$ under the endpoint Prodi-Serrin criterion \eqref{eq:L1-criterion}:

\begin{prop}\label{prop:L1-criterion} 
We consider $d\in \N$, $d \ge 3$ and 
 $\g \in (-d, 0)$, and assume that $f_{\rm in}$ and $f=f(t,v)$ define a  solution to Eq. \eqref{eq:Landau} in the sense of Definition \ref{defi:weak}. Assume that
\begin{equation*}
f  \in L^{1}\left([0,T]\;;\;L^{\frac{d}{d+\g}}(\R^{d})\right),  
\end{equation*}
for some $T >0$. Assume moreover that 
$$f_{\mathrm{\rm in}} \in L^{1}_{\nu_{\g,p}}(\R^{d}), \qquad p \in \left(1,\frac{d}{d+\g}\right), $$
with $\nu_{\g,p}$ defined by (\ref{eq:nuk}). 
{Then, for any $t \in (0,T]$}, 
\begin{equation}\label{eq:LpCp-g} \|f(t)\|_{p} \le  C_{\g,p}\, t^{- \frac{d}{2} (1 - \frac1{p})} \,\sup_{\tau\in [0,T]} {\lm}_{\nu_{\g,p}}(\tau)\,, \end{equation}
where 
$$ {C_{\gamma,p}}=C_{\g,p}(T)= \bigg[ \frac{4\,K_0}{dpC_{\mathrm{Sob}}} \bigg]^{- \frac{d}{2} (1 - \frac1p)} \,\, \exp \bigg(\frac{\g^{2}}{p^{2}}K_0(p-1)T+ C_{d,\g,p}\left(1-\frac{1}{p}\right)\int_{0}^{T}\left\|f(\tau)\right\|_{\frac{d}{d+\g}}\d\tau  \bigg),  $$
 $K_0$ is the constant (depending on $\varrho_{\rm in}$, $E_{\rm in}$ and $H(f_{\mathrm{\rm in}})$) appearing in Remark \ref{diffusion}, $C_{\mathrm{Sob}}$ is the constant appearing in the 
Sobolev embedding  $\dot{\H}^1(\R^d) \hookrightarrow L^{\frac{2d}{d-2}}(\R^d)$ (see \eqref{eq:sobo-p}),
 and $C_{d,\g.p}$ is the positive constant (depending on $d, \g$ and $p$) appearing in Proposition \ref{without_dissip-g}.
\end{prop}

 \begin{proof} We already observed (see \eqref{eq:CgLambda0}) that, under assumption \eqref{eq:L1-criterion},
 $$\int_{\R^{d}}\bm{c}_{\g}[f(t)](v)f^{p}(t,v)\, \d v \leq \Lambda_{0}(t)\,\|f(t)\|_{p}^{p}=\Lambda_{0}(t)\lM_{p}(t),$$
 where $\Lambda_{0}(t) :=C_{d,\g,p}\|f(t)\|_{\frac{d}{d+\g}}$ and $1 < p< \frac{d}{d+\g}.$ Inserting this in the evolution of $\lM_{p}(t)$ given in \eqref{eq:LMp}, we deduce that
 \begin{multline}\label{eq:LMp0}
\dfrac{\d}{\d t}\lM_{p}(t) + \frac{2K(p)}{C_{\mathrm{Sob}}}\lm_{\nu_{\g,p}}(t)^{-\frac{2p}{d(p-1)}}\lM_{p}(t)^{1+\frac{2}{d(p-1)}} \leq \left(K(p)\g^{2}+(p-1)\Lambda_{0} (t)\right)\lM_{p}(t),
\end{multline}
with $\Lambda_{0} \in L^{1}([0,T]).$ As in the proof of Prop. \ref{eq_with_dissip}, we can now define
$$ y(t) := \lM_{p}(t) \, \exp\bigg( -  \int_0^t \bigg[K(p)\g^{2}+(p-1)\Lambda_{0} (s)\bigg] \, \d s  \bigg) , $$
and we see that $y(t)$ satisfies the differential inequality 
$$ y'(t) \le - \frac{2\,K(p)}{C_{\mathrm{Sob}} }\, \lm_{ \nu_{\g,p} }(t)^{-\frac{2p}{d(p-1)}}\,  \, y(t)^{1 + \frac{2}{dp-d}} , $$
where we used that $y(t) \leq \lM_{p}(t)$. This inequality can be solved (for $t \in [0,T]$) as 
$$ y(t) \le \bigg[ \frac{4\,K(p)}{d(p-1)\,C_{\mathrm{Sob}}}   \bigg(\sup_{\tau \in [0,T]} {\lm}_{ \nu_{\g,p} }(\tau) \bigg)^{-\frac{2p}{d(p-1)}}  \, t \bigg]^{- \frac{d(p-1)}2} . $$
This proves the result.
\end{proof}

\begin{rmq} The aforementioned criterion \eqref{eq:L1-criterion} is always met in the case of moderately soft potentials $\g \in [-2,0)$, see Section \ref{sec:moderate}.
\end{rmq}

 \subsection{Proof of the $\varepsilon$-Poincar\'e inequality}\label{sec:eps} For the Prodi-Serrin criteria with $r >1$, by virtue of the proof of the above Proposition, the crucial point for the appearance of $L^{p}$-moments lies in a suitable estimate for 
$$
\int_{\R^{d}} \bm{c}_{\g}[f(t)](v)f^{p}(t,v)\d v.
$$
To deal with such a term, one resorts to the $\e$-Poincar\'e inequality stated in Proposition \ref{prop:GG}. We first give the proof of this fundamental result:
\begin{proof}[Proof of Proposition \ref{prop:GG}]    We start with the case $ -d < \g < 0$. For a given  $g, \phi\ge 0$, we define
$$
I[\phi] :=\int_{\R^{d}}\phi^{2}\bm{c}_{\g}[g]\d v=(d-1)\,(\g+d)\int_{\R^{d}\times \R^{d}}|v-\vet|^{\g}\phi^{2}(v)g(\vet)\d v\d\vet\,.
$$
For any $v,\vet \in \R^{d}$, if $|v-\vet| < \frac{1}{2}\langle v\rangle$, then $\langle v\rangle\leq 2\langle \vet\rangle$.  We deduce from this, see \cite[Eq. (2.5)]{amuxy}, that
$$|v-\vet|^{\g}\leq 2^{-\g}\langle v\rangle^{\g}\left(\ind_{\left\{|v-\vet| \geq\frac{\langle v\rangle}{2}\right\}}+
\langle \vet\rangle^{-\g}|v-\vet|^{\g}\ind_{\left\{|v-\vet|< \frac{\langle v\rangle}{2}\right\}}\right).$$
Therefore,
\begin{equation}\label{eq:ineq}
I[\phi] \leq  2^{-\g}(d-1)\,(\g+d)\left(I_{1}+I_{2}\right),
\end{equation}
with
\begin{multline*}
I_{1}:=\int_{\R^{d}}\langle v\rangle^{\g}\phi^{2}(v)\d v \int_{|v-\vet| \geq \frac{\langle v\rangle}{2}} g(\vet)\d\vet\,,\\
\text{and}\qquad I_{2}:=\int_{\R^{d}}\langle \vet\rangle^{-\g}g(\vet)\d \vet\int_{|v-\vet| < \frac{1}{2}\langle v\rangle }|v-\vet|^{\g}\langle v\rangle^{\g}\phi^{2}(v)\d v.\end{multline*}
Introducing
$$F=\langle \cdot \rangle^{-\g}g, \qquad \psi=\langle \cdot \rangle^{\frac{\g}{2}}\phi , $$
one checks that
$$I_{1}  \leq \, \|g\|_{{1}} \|\psi^{2}\|_{{1}}\,,$$
while
$$I_{2} \leq \int_{\R^{d}\times \R^{d}}|v-\vet|^{\g}F(\vet)\psi^{2}(v)\d v\d\vet.$$
We estimate then $I_{2}$  thanks to  {Hardy-Littlewood-Sobolev} inequality \eqref{eq:HLS} to get
$$I_{2}  \leq C\,\|F\|_{q}\,\|\psi^{2}\|_{{r}}=C\,\|F\|_{{q}}\,\|\psi\|_{{2r}}^{2}, \qquad \frac{1}{r} =2-\frac{1}{q}+\frac{\g}{d}\quad  (1< q,r < \infty), $$
where $C$ depends only on $\g, d, q$.
\par
We recall that we apply this with $\frac{d}{d+2+\g} < q< \frac{d}{d+\g}.$ We write   for some $s \in (0,1)$
\begin{equation}\label{eq:prHLS}
q=\frac{d}{d+\g+2s}, \qquad r=\frac{d}{d-2s}.\end{equation}
Notice that $2r=\frac{2d}{d-2s}$. According for example to Theorem 1.38 of \cite{chemin}, $\dot{\H}^{s}$ is continuously embedded in $L^{2r}$, that is (for $C>0$ depending on $d,s$)
$$I_{2} \leq C \|F\|_{{q}}\|\psi\|_{\dot{\H}^{s}}^{2}.$$
Moreover, since 
$$\|\psi\|_{\dot{\H}^{s}} \leq \|\psi\|_{\dot{\H}^{1}}^{s}\,\|\psi\|_{{2}}^{1-s}\,, $$
see for example \cite[Proposition 1.32]{chemin}, one has that
$$I_{2} \leq C \|F\|_{{q}}\|\psi\|_{\dot{\H}^{1}}^{2s}\,\|\psi\|_{{2}}^{2-2s}.$$
Thanks to Young's inequality, there is $\tilde{C} >0$ depending only on $d,\g,s$ such that, for any $\delta >0$,
$$2^{-\g}(d-1)\,(\g+d)I_{2} \leq \delta\,\|\psi\|_{\dot{\H}^{1}}^{2}+\tilde{C}\|F\|_{{q}}^{\frac{1}{1-s}}\,\delta^{-\frac{s}{1-s}}\|\psi\|_{{2}}^{2}\, .$$
Plugging this inequality into the estimate for $I_{1}$, we see that
$$I[\phi] \leq \delta\,\|\psi\|_{\dot{\H}^{1}}^{2} + C\left(\|g\|_{{1}} +\delta^{-\frac{s}{1-s}}\|F\|_{q}^{\frac{1}{1-s}}\right)\|\psi\|_{{2}}^{2},$$
which is exactly the desired estimate since $F=\langle\cdot\rangle^{|\g|} g$.

{We now turn to the case  when $\g = -d$.} {One notices that
$$
\int_{\R^{d}}\phi^{2}\bm{c}_{-d}[g]\d v=c_{d}\int_{\R^{d}}\phi^{2}(v)g(v)\d v=c_{d}\int_{\R^{d}}F \psi^{2}\d v\,.
$$
Thus, a simple use of H\"older's inequality yields
$$\int_{\R^{d}}\phi^{2}\bm{c}_{-d}[g]\d v \leq c_{d}\|F\|_{{q}}\|\psi\|_{{2r}}^{2}\,,\qquad \frac{1}{r}+\frac{1}{q}=1\,.$$
Writing now for $s\in (0,1)$ that $q=\frac{d}{2s}$, we see that $r=\frac{d}{d-2s}$, and we proceed identically as in
the case $-d < \g < 0$. }
\end{proof}

\subsection{Appearance of $L^{p}$-norms - general $L^{r}_{t}(L^{q})$ case}

 We now turn to the general Prodi-Serrin criterion in the $L^{r}_{t}(L^{q}_{v})$ case, with $r >1$. In such a situation, $L^{p}$-norms appear for \emph{any} choice of $p >1$. This contrasts with the previous case in which appearance of $L^{p}$ norms occurred only for $p < \frac{d}{d+\g}$. The following is the analogue of Proposition \ref{mps_coul} for $-d < \g <0:$

\begin{prop}\label{mps_g}
 Let $f_{\rm in}$ and $f=f(t,v)$ define a  solution to Eq. \eqref{LFD} in the sense of Definition \ref{defi:weak} with $-d < \g < 0.$ 
Assume that 
\begin{equation}\label{eq:PS-ggl}\langle \cdot \rangle^{|\g|}\,f \in L^{r}([0,T]\,;\,L^{q}(\R^{d})) \:\: \text{ with }\:\: \frac{2}{r}+\frac{d}{q}=2+d+\g\end{equation}
for some $T >0$, where $r \in (1,\infty)$,   {$q \in \left(\max\left(1 , \frac{d}{2+d+\g}\right),\frac{d}{d+\g}\right)$.} 
Given $p \in (1,\infty)$, assume that
$$f_{\rm in} \in L^{1}_{\nu_{\g,p}}(\R^{d}), \qquad \nu_{\g,p} :=\frac{|\g|d}{2}\left(1-\frac{1}{p}\right) .$$
Then, the solution $f=f(t,v)$ satisfies  the following estimate, for any $t \in (0,T]$:

\begin{equation}\label{Lp-Kpq}
\|f(t)\|_{p} \le K_{p,\g,q} \, t^{- \frac{d}{2} (1 - \frac1{p})} \,\sup_{\tau\in [0,T]} {\lm}_{ \nu_{\g,p}}(\tau)\,,\end{equation}
where
\begin{multline*} K_{p,\g,q} := \bigg[ \frac{2K_0}{dp\,C_{\mathrm{Sob}} }\ \bigg]^{- \frac{d}{2}\,(1 - \frac1p)}\exp\bigg(\frac{p-1}{p}\left[\frac{\g^{2}}{p}K_0+C_{0}\|f_{\rm in}\|_{1}\right]   T\bigg)\\
\exp\bigg( \frac{p-1}{p}\, C_{0}\left(\frac{K_{0}}{p}\right)^{r-1}\, \int_{0}^{T}\left\|\langle \cdot \rangle^{|\g|}\, f(\tau)\right\|_{q}^{r}\d\tau
\bigg)\,,\end{multline*}
  $K_0$ is the constant (depending on $\varrho_{\rm in}$, $E_{\rm in}$ and $H(f_{\mathrm{\rm in}})$) appearing in Remark \ref{diffusion}, $C_{\mathrm{Sob}}$ is the constant appearing in the 
Sobolev embedding  $\dot{\H}^1(\R^d) \hookrightarrow L^{\frac{2d}{d-2}}(\R^d)$ (see \eqref{eq:sobo-p}), and  $C_{0}$ is the positive constant (depending on $d, \g, q$) appearing in the $\e$-Poincar\'e inequality \eqref{eq:estimatcLP}.
\end{prop} 
 
 \begin{proof} The Prodi-Serrin condition \eqref{eq:PS-ggl} amounts to
$$\int_{0}^{T}\left\|f(t)\right\|_{L^{q}_{q|\g|}}^{r}\d t < \infty, \qquad \frac{d}{q}+\frac{2}{r}=d+2+\g, \qquad 1 < r < \infty.$$
Notice that the assumption on $q$ implies that $\frac{d}{d+2+\g} < q < \frac{d}{d+\g}$, and therefore,
 for some $s \in (0,1)$,
$$q=\frac{d}{d+2s+\g}, $$ 
and the relation between $r$ and $s$ is $r=\frac{1}{1-s}$.
\medskip
We recall the evolution of $L^{p}$-norms given by \eqref{eq:dtMsp-g}:
$$\dfrac{\d}{\d t}\lM_{p}(t) + 2K(p)\lD_{\g,p}(t) \leq K(p)\g^{2}\lM_{p}(t)
+ (p-1) \int_{\R^{d}} \bm{c}_{\g}[f(t)](v)\,f^p(t,v) \d v\,,$$
and estimate the last integral thanks to an application of
 the $\e$-Poincar\'e inequality \eqref{eq:estimatcLP} with $g=f(t, \cdot)$, $\phi=f(t, \cdot)^{\frac{p}{2}}$. This yields  
\begin{multline*}
 { \int_{\R^{d}} {\bm{c}}_{\g}[f(t)](v)\,f^{p}(t,v)\d v }\, \leq \e\,\lD_{\g,p}(t) \\
+C_{0}\left( \|f(t)\|_{{1}}  
 +\e^{-\frac{s}{1-s}}\|\langle \cdot \rangle^{|\g|}f(t)\|_{q}^{\frac{1}{1-s}}\right)\int_{\R^{d}}f(t,v)^{p}\langle v\rangle^{\g}\d v.\end{multline*}
Choosing $\e$ in such a way that {$(p-1)\, \e =K(p)$}, we see that
\begin{equation*}
\frac{\d}{\d t}\lM_{p}(t) + K(p)\lD_{\g,p}(t) \leq K(p)\g^{2}\lM_{p}(t)
+ (p-1)\left(C_{0}\|f_{\rm in}\|_{1}+\bm{\Lambda}(t)\right)\int_{\R^{d}}f(t,v)^{p}\langle v\rangle^{\g}\d v,\end{equation*}
with 
$$\bm{\Lambda}(t):=C_{0}\e^{-\frac{s}{s-1}}\|\langle \cdot \rangle^{|\g|}f(t)\|_{q}^{\frac{1}{1-s}}=C_{0}\left(\frac{K_{0}}{p}\right)^{r-1}\|\langle \cdot \rangle^{|\g|}f(t)\|_{q}^{r} \in L^{1}([0,T]), $$
(the last point is precisely the Prodi-Serrin assumption \eqref{eq:PS-ggl}). Dropping the weight $\langle v\rangle^{\g}$ in the last integral, we end up with 
\begin{equation} \label{agg-g}
\frac{\d}{\d t}\lM_{p}(t) + K(p)\lD_{\g,p}(t) \leq \left[K(p)\g^{2} + (p-1)C_{0}\|f_{\rm in}\|_{1} + (p-1)\bm{\Lambda}(t)\right]\lM_{p}(t) .
\end{equation}
At this point, using again \eqref{usedt-g} as in the proof of Prop. \ref{prop:estMs-g},
 we deduce that
\begin{multline*}
\dfrac{\d}{\d t}\lM_{p}(t) + \frac{K(p)}{C_{\mathrm{Sob}}}\lm_{\nu_{\g,p}}(t)^{-\frac{2p}{d(p-1)}}\lM_{p}(t)^{1+\frac{2}{d(p-1)}}\\
\leq  \left[K(p)\g^{2} + (p-1)C_{0}\|f_{\rm in}\|_{1} + (p-1)\bm{\Lambda}(t)\right]\lM_{p}(t) .
\end{multline*}
The conclusion follows exactly as in the proof of Proposition \ref{prop:L1-criterion} (which corresponds to the case $s=0$).\end{proof}
 As far as propagation of $L^{p}$-norms is concerned, we can deduce also the following:

 \begin{cor}\label{cor:propagLP}  Let $f_{\rm in}$ and $f=f(t,v)$ define a  solution to Eq. \eqref{LFD} in the sense of Definition \ref{defi:weak} with $-d < \g < 0.$ 
Assume that \eqref{eq:PS-ggl} holds true for some $T >0$, where $r \in (1,\infty)$, $q \in \left(\max\left(1,\frac{d}{2+d+\g}\right), {\frac{d}{d+\g}}\right)$. Given $p \in (1,\infty)$, assume that
 {$f_{\rm in} \in L^{p}(\R^{d})$. }
Then, the solution $f=f(t,v)$ belongs to $L^{\infty}([0,T]\,,\,L^{p}(\R^{d}))$. More precisely,
\begin{multline}\label{Lp-prop}
\|f\|_{L^{\infty}([0,T];L^{p}(\R^{d}))} \le \, \|f_{\rm in}\|_{p}\exp\left(\frac{p-1}{p}\left[\frac{\g^{2}}{p}K_0+C_{0}\|f_{\rm in}\|_{1}\right]   \, T \right)\\
\exp\bigg(\frac{p-1}{p}\,C_{0}\left(\frac{K_{0}}{p}\right)^{r-1} \int_{0}^{T}\left\|\langle \cdot \rangle^{|\g|}f(\tau)\right\|_{q}^{r}\d\tau \bigg)\,,\end{multline}
 where $K_0$ is the constant (depending on $\varrho_{\rm in}$, $E_{\rm in}$ and $H(f_{\mathrm{\rm in}})$) appearing in Remark \ref{diffusion}, 
 {and  $C_{0}$ is the positive constant (depending on $d, \g, q$) appearing in the $\e$-Poincar\'e inequality \eqref{eq:estimatcLP}.}
 \end{cor}

 \begin{proof} Coming back to  \eqref{agg-g} {(notice that no assumption on the finiteness of $L^1$-moment is needed to get this inequality)}, dropping the nonnegative term $K(p)\lD_{p}(t)$ on the right-hand-side, we deduce from Gr\"onwall's Lemma that
 $$\lM_{p}(t) \leq \exp\left\{\left(K(p)\g^{2} + (p-1)C_{0}\|f_{\rm in}\|_{1} \right)t + (p-1)\int_{0}^{t}\bm{\Lambda}(\tau)\d\tau\right\}\lM_{p}(0),$$
from which the result follows. 
 \end{proof}

\begin{proof} of Theorem \ref{827}:  It is a direct consequence of Proposition \ref{without_dissip-g} and Proposition \ref{prop:L1-criterion}  when  the $L^{1}_{t}(L^{\infty}_{v})$ condition is considered, and of Proposition \ref{mps_g} and Corollary \ref{cor:propagLP} when the $L^{r}_{t}(L^{q}_{v})$ condition is considered.
\end{proof}

\section{Stability and uniqueness of solution}\label{sec:unique}

 
 We adapt here the strategy proposed in \cite{chern} to deduce uniqueness of solutions. Notice that our proof covers all cases $\g \in [-d,0)$ and $d\geq 3$ whereas the uniqueness result in \cite{chern} is given in dimension $d=3$ for the Coulomb case $\g=-3$. 

 
 Our strategy is inspired by the work \cite{chern} in the sense that we deduce the stability from suitable $L^{\infty}([0,T]\,;\,L^{p}_{k}(\R^{d}))$ estimates on the solution to \eqref{LFD}. We show first that such uniform in time estimates, which are variants of the estimates obtained in Sections \ref{sec:Coulomb} and \ref{sec:prodi}, hold true under our Prodi-Serrin conditions. The result is a consequence  of the study of the evolution of weighted $L^{p}$-norms provided in Appendix \ref{sec:weight}. More precisely, we show the following 

\begin{prop}\label{prop:ChernINT}
Let $f_{\rm in}$ and $f=f(t,v)$ define a  solution to Eq. \eqref{LFD} in the sense of Definition \ref{defi:weak} on $[0,T]$ (for some $T>0$) with $d\in \N$, $d\ge 3$, $-d \leq \g < 0.$ Let $k \geq0$. We consider the following two alternative assumptions (with the convention $\frac{d}{d+\g} = \infty$ if $\g=-d$)
\begin{enumerate}
\item[\textbf{Hyp. 1.}]  $f_{\rm in} \in L^{p}_{k}(\R^{d})$ 
for some $1 < p < \frac{d}{d+\g}$  and
 $$f \in L^{1}([0,T]\,;\;L^{\frac{d}{d+\g}}(\R^{d}))\qquad   .$$  
\end{enumerate}
\begin{enumerate}
\item[\textbf{Hyp. 2.}] $f_{\rm in} \in L^{p}_{k}(\R^{d})$ for some  $p >1$ and $$\langle \cdot \rangle^{|\g|}\,f \in L^{r}([0,T]\,;\,L^{q}(\R^{d})) \quad \text{ with } \quad  \frac{2}{r}+\frac{d}{q}=2+d+\g\,,$$
 {where $r \in (1,\infty)\,, q \in \left(\max\left(1, \frac{d}{2+d+\g}\right),\frac{d}{d+\g}\right)\,.$} Moreover, in the Coulomb case in dimension $d=3=-\g$, we assume $f_{\rm in} \in L^{1}_{s}(\R^{d})$ for some $s >2.$
 \end{enumerate} Then, under \textbf{Hyp. 1} or \textbf{Hyp. 2}, and for any $t \in [0,T]$, one has
\begin{equation}\label{eq:LinftyLp}
\int_{\R^{d}} f^p(t,v) \langle v\rangle^{k} \d v +K_{0}\left(1-\frac{1}{p}\right)\int_{0}^{t}\d \tau\int_{\R^{d}} \left|\nabla \left(\langle v\rangle^{\frac{k+\g}{2}} f^{\frac{p}{2}}(\tau,v) \right)\right|^{2}\d v  \leq \bm{C}(T,f_{\rm in})\,,\,\end{equation}
where $\,\bm{C}(T,f_{\rm in})$ is an explicit positive constant depending on $d$, $\g,p,k,T, K_0, \|f_{\rm in}\|_{L^{1}_{2}}, H(f_{\rm in}), \|f_{\rm in}\|_{L^{p}_{k}}$ and the Prodi-Serrin norms $\|f\|_{L^{1}_{t}(L^{\frac{d}{d+\g}})}$ or $\left\|\langle \cdot\rangle^{|\g|}f\right\|_{L^{r}_{t}(L^{q})}$ (and $q$),
according to the considered case (and on $\|f_{\rm in}\|_{L^{1}_{s}}$ $s >2$ in the Coulomb case in dimension $d=3$). 

If one moreover assumes in Assumptions \textbf{Hyp. 1} or \textbf{Hyp. 2} that $p > \frac{d}{d+\g+2}$ and 
$$f_{\rm in} \in L^{1}_{\nu}(\R^{d}), \qquad \nu > \overline{\nu}$$
with 
$$\overline{\nu}:=\dfrac{d(p-1)|\g|-k(\g+2)^{-}}{d(p-1)-p(\g+2)^{-}}\ind_{k \leq p|\g|}+\dfrac{(d(p-1)-k)|\g|}{d(p-1)+p\g}\ind_{k >p|\g|},$$
where $a^{-}=-a\ind_{a <0}$,
then, for any smooth $\phi$,  the following version of the $\e$-Poincar\'e inequality is valid for any $t \in [0,T]$ and any $\e >0$:
\begin{multline}\label{eq:estimatcLP-ft}
\int_{\R^{d}}\phi^{2}(v)\bm{c}_{\g}[f(t)] \d v \leq \e\,\int_{\R^{d}}\left|\nabla \left(\langle v\rangle^{\frac{\g}{2}}\phi(v)\right)\right|^{2}\d v 
+C_{k}(T,f_{\rm in},\e)\int_{\R^{d}}\phi^{2}(v)\langle v\rangle^{\g}\d v,
\end{multline}
where $C_{k}(T, f_{\rm in},\e)$ is an explicit positive constant depending on $ \|f_{\rm in}\|_{L^{p}_{k}}$, $\|f_{\rm in}\|_{L^{1}_{\max(2,\nu)} }$, $\e$, $d,\g,p$, $k,T, K_0,$ and the Prodi-Serrin norms $\|f\|_{L^{1}_{t}(L^{\frac{d}{d+\g}})}$ or $\left\|\langle \cdot\rangle^{|\g|}f\right\|_{L^{r}_{t}(L^{q})}$ (and $q$), according to the considered case. 
\end{prop}


\begin{proof} The proof follows the line of the corresponding results in  Proposition \ref{prop:L1-criterion} and Theorem \ref{mps_g}, whose proofs are modified in order to handle weights. Given $k \geq0$, we use the notations of Appendix \ref{sec:weight} and deduce the evolution of the weighted $L^{p}$-norms from \eqref{eq:dtMsp},
 which holds for any $p > 1$:
\begin{multline}\label{eq:lMkpt}
\dfrac{\d}{\d t}\lM_{k,p}(t) + 2K(p)\lD_{k+\g,p}(t) \leq K(p)(k+\g)^{2}\lM_{k+\g,p}(t)\\
+ C_{k,\g,p}\sum_{i=0}^{2}\int_{\R^{d}}\langle v\rangle^{k-i} \bm{c}_{\g+i}[f(t)](v)\,f^p(t,v) \d v\,,
\end{multline}
where $C_{k,\g,p}$ depends on $d, k,\g,p$ (the explicit value of the constant is given in Appendix \ref{sec:weight}).
\medskip

We divide the proof according to the two cases $\g=-d$ or $-d < \g <0.$\\
\par 

\textit{1) The Coulomb case.} Let us begin with the Coulomb case $\g=-d$. Thanks to \eqref{eq:lemcompar}, we deduce now from  \eqref{eq:lMkpt} that 
\begin{multline}\label{eq:lmS1-coul}
\dfrac{\d}{\d t}\lM_{k,p}(t) + 2K(p)\lD_{k-d,p}(t) \leq \bm{c}_{k,p}\lM_{k,p}(t)  +\bm{C}_{k,p}\int_{\R^{d}}\langle v\rangle^{k-1}\overline{\bm{c}}_{1-d}[f(t)](v)f^{p}(t,v)\d v\,\\
+C_{k,p}\int_{\R^{d}}\langle v\rangle^{k}f^{p+1}(t,v)\d v\,,\end{multline}
where $\bm{C}_{k,p},\bm{c}_{k,p},C_{k,p}$ are explicit positive constants depending only on $p,d,k,K_{0},\|f_{\rm in}\|_{L^{1}}$ and  where, for all $\alpha \in (-d,0)$
$$\overline{\bm{c}}_{\alpha}[f(t)](v) := (d-1)(d+\alpha) \, \int_{|v-\vet|\leq1}|v-\vet|^{\alpha}f(t,\vet)\d\vet.$$
We estimate the last two integrals in \eqref{eq:lmS1-coul} in different ways according to the Prodi-Serrin conditions that we consider.

\noindent \textit{$\bullet$ \textbf{Hyp. 1.}} First, one has $\ds\int_{\R^{d}}\langle v\rangle^{k}f^{p+1}(t,v)\d v \leq \|f(t)\|_{\infty}\lM_{k,p}(t).$
Second, 
\begin{equation}\label{c1-d}
  \overline{\bm{c}}_{1-d}[f(t)](v)=(d-1)\int_{|v-\vet| \leq1}|v-\vet|^{1-d}f(t,\vet)\d \vet \leq (d-1)\|f(t)\|_{\infty}|\S^{d-1}|
  \end{equation}
so that
$$\int_{\R^{d}}\langle v\rangle^{k-1}\overline{\bm{c}}_{1-d}[f(t)](v)f^{p}(t,v)\d v\leq (d-1)|\S^{d-1}|\|f(t)\|_{\infty}\lM_{k-1,p}(t).$$
Therefore, \eqref{eq:lmS1-coul} becomes
$$\dfrac{\d}{\d t}\lM_{k,p}(t) + 2K(p)\lD_{k-d,p}(t) \leq \bm{C}(k,p,d)\|f(t)\|_{\infty}\lM_{k,p}(t)$$
and, since $\|f(\cdot)\|_{\infty} \in L^{1}([0,T])$ by assumption, we deduce from  Gr\"onwall's Lemma the bound
\begin{multline*}
\lM_{k,p}(t) + 2K(p)\int_{0}^{t}\lD_{k-d,p}(s)\exp\left(\bm{C}(k,p,d)\int_{s}^{t}\|f(\tau)\|_{\infty}\d\tau\right)\, ds \\
\leq \lM_{k,p}(0)\exp\left(\bm{C}(k,p,d)\int_{0}^{t}\|f(\tau)\|_{\infty}\d\tau\right)\end{multline*}
which gives the result.

\noindent \textit{$\bullet$ \textbf{Hyp. 2.}} In this second case,  we observe that the term $\int_{\R^{d}}\langle v\rangle^{k}f^{p+1}(t,v)\d v$ is identical (up to a constant) to $\int_{\R^{d}}\langle v\rangle^{k}\bm{c}_{-d}[f(t)](v)f^{p}(t,v)\d v$ and is estimated using the Coulomb version of the $\e$-Poincar\'e inequality \eqref{eq:estimatcLP}, with $\phi=\langle \cdot \rangle^{\frac{k}{2}}f^{\frac{p}{2}}$ and $g=f$, leading to
\begin{equation}\label{eq:CkpCou}
C_{k,p}\int_{\R^{d}}\langle v\rangle^{k}f^{p+1}(t,v)\d v \leq \varepsilon \lD_{k-d,p}(t) + C_{\e} \|\langle \cdot \rangle^{d}f(t)\|_{q}^{\frac{2q}{2q-d}} \lM_{k-d,p}(t) ,\end{equation}
where $q >\frac{d}{2}$ is given by $\frac{2}{r}+\frac{d}{q}=2$ (recall we consider here $\g=-d$) and $r=\frac{2q}{2q-d}$. 

To estimate the term $$\ds\int_{\R^{d}}\langle v\rangle^{k-1}\overline{\bm{c}}_{1-d}[f(t)](v)f^{p}(t,v)\d v,$$ we have to additionally distinguish the two cases $d=3$, $d \geq 4$. We give here only the proof in the case $d=3$ and refer to \cite[Proof of Prop. 1.8, $d \geq 4$]{ABDL} for the other case. We point out already that the use that we will make of Proposition \ref{prop:ChernINT} in the subsequent stability result is actually restricted to the case $d=3.$  

For $d=3$, one has $\overline{\bm{c}}_{1-d}[f(t)]=\overline{\bm{c}}_{-2}[f(t)]\leq \bm{c}_{-2}[f(t)]$ and, resorting to the critical $\e$-Poincar\'e inequality in \cite{ABL} (see Prop. \ref{propo:GG2} in Appendix \ref{sec:known}), one has the following:
 for any $\alpha >2$ and any $\e >0$,
\begin{multline*} {\int_{\R^{3}}\langle v\rangle^{k-1}\overline{\bm{c}}_{-2}[f(t)](v)\,f^{p}(t,v)\d v\, } \\
 { \leq \e\;\lD_{k-3,p}(t) +C_{0}\exp\left(\e^{-\frac{\alpha}{\alpha-2}}\,\lm_{\alpha}(f(t))^{\frac{\alpha}{\alpha-2}}\right)\int_{\R^{3}}f(t,v)^{p}\langle v\rangle^{k-3}\d v \,,
}
\end{multline*}
for some $C_{0} >0$ depending only on $\varrho_{\rm in}$, $E_{\rm in }$ and $H(f_{\rm in})$. In particular, assuming $f_{\rm in} \in L^{1}_{\alpha}(\R^{3})$ and $\alpha >2$, one deduces that there exists $C_{T} >0$ depending only on $\|f_{\rm in}\|_{L^{1}_{\alpha}}$,  $H(f_{\rm in})$ such that
\begin{multline}\label{eq:c1-d}
\bm{C}_{k,p}\int_{\R^{d}}\langle v\rangle^{k-1}\overline{\bm{c}}_{1-d}[f(t)](v)\,f^{p}(t,v)\d v\,
 \leq \e\;\lD_{k-d,p}(t) \\ +\exp\left(C_{T}(1+\e^{-\frac{\alpha}{\alpha-2}})\right)\lM_{k-d,p}(t)\,\:\;\ (d=3).
\end{multline}
Plugging \eqref{eq:c1-d}, \eqref{eq:CkpCou} into \eqref{eq:lmS1-coul} and choosing $\e=\e_{0}>0$ small enough, we obtain
$$\dfrac{\d}{\d t}\lM_{k,p}(t) + K(p)\lD_{k-d,p}(t) \leq \Lambda(t)\,\lM_{k,p}(t), \qquad (d=3)$$
where we introduced, for the specific choice of $\e=\e_{0}$,
$$
\Lambda(t)=\exp\left(C_{T}(1+\e_{0}^{-\frac{\alpha}{\alpha-2}})\right) + C_{\e_{0}} \|\langle \cdot \rangle^{d}f(t)\|_{q}^{\frac{2q}{2q-d}} +\bm{c}_{k,p}.$$
By assumption, $\Lambda \in L^{1}([0,T])$ and one concludes as before by a Gr\"onwall argument.
\\

\textit{2) The case $-d<\g <0.$} 
  Combining Eq. \eqref{eq:lMkpt}  with Lemma \ref{lem:compa} (and modifying the name of the constant appearing in this lemma), we deduce then that \begin{equation}\label{eq:lmS1}
\dfrac{\d}{\d t}\lM_{k,p}(t) + 2K(p)\lD_{k+\g,p}(t) \leq \bm{c}_{k,p}\lM_{k,p}(t)  +\bm{C}_{k,p}\int_{\R^{d}}\langle v\rangle^{k}\overline{\bm{c}}_{\g}[f(t)](v)f^{p}(t,v)\d v\,,\end{equation}
where $\bm{C}_{k,p},\bm{c}_{k,p}$ are explicit positive constants depending only on $k,p,d,\g,K_{0}$.
As before, we estimate the last integral in different ways according to the Prodi-Serrin conditions that we consider.

\noindent \textit{\textbf{Hyp. 1.}} 
According to Peetre's inequality,
 for all $v,\vet\in \R^{d}$ and $s \in \R$, $\langle v\rangle^{s} \leq 2^{\frac{|s|}{2}}\langle v-\vet\rangle^{|s|}\langle \vet\rangle^{s}$,
 so that (for some $C_s>0$ depending on $s$)
$$\int_{\R^{d}}\langle v\rangle^{k}\overline{\bm{c}}_{\g}[f(t)](v)f^{p}(t,v)\d v \leq C_{s}\int_{|v-\vet|\leq1}|v-\vet|^{\g}\langle \vet\rangle^{s}f(t,\vet)\langle v\rangle^{k-s}f^{p}(t,v)\d v\d\vet,$$
for any $0 \leq s \leq k$. Using now Hardy-Littlewood-Sobolev inequality \eqref{eq:HLS} with $g=\langle \cdot\rangle^{s}f(t,\cdot)$, $h(\cdot)=\langle \cdot \rangle^{k-s}f^{p}(t,\cdot)$, so that
$$\frac{1}{r}=2+\frac{\g}{d}-\frac{1}{p} < 1,$$
we conclude that (for some $C_{d,\g,p,s}>0$ depending on $d,\g,p,s$)
\begin{equation*}\begin{split}
\int_{\R^{d}}\langle v\rangle^{k}\overline{\bm{c}}_{\g}[f(t)](v)f^{p}(t,v)\d v &\leq C_{d,\g,p,s}\|\langle \cdot \rangle^{s}f(t)\|_{{p}}
\|\langle \cdot\rangle^{k-s}f^{p}(t)\|_{{r}}\\
&=C_{d,\g,p,s}\|\langle \cdot \rangle^{s}f(t)\|_{{p}}\|\langle \cdot\rangle^{\frac{k-s}{p}}f(t)\|_{{pr}}^{p}.\end{split}\end{equation*}
Writing $q_{0}=\frac{d}{d+\g}$ and recalling that $p < \frac{d}{d+\g}$, we use an interpolation based on H\"older's inequality, namely
$$
\|\langle \cdot\rangle^{\frac{k-s}{p}}f(t)\|_{{pr}}^{p} \leq \|\langle \cdot\rangle^{a}f(t)\|_{{p}}^{p\theta}\|\langle \cdot\rangle^{b}f(t)\|_{{q_{0}}}^{p(1-\theta)},
$$
with 
$$\frac{1}{pr}=\frac{\theta}{p}+\frac{1-\theta}{q_{0}}, \qquad \theta=\frac{q_{0}-pr}{r(q_{0}-p)}, \qquad \frac{k-s}{p}=a\theta+b(1-\theta).$$
Observing that
$\frac{1}{r}=\frac{1}{q_{0}}+1-\frac{1}{p},$
one actually has 
$\theta=1-\frac{1}{p},$ so that $p\,(1-\theta)=1$ and
$$\int_{\R^{d}}\langle v\rangle^{k}\overline{\bm{c}}_{\g}[f(t)](v)f^{p}(t,v)\d v \leq C_{d,\g,p,s}\, \|\langle \cdot \rangle^{s}f(t)\|_{{p}}\|\langle \cdot\rangle^{a}f(t)\|_{{p}}^{p-1}\|\langle \cdot \rangle^{b}f(t)\|_{{q_{0}}}.$$
Choosing $s=a=\frac{k}{p}$ and observing that $b=0$ and $\|\langle \cdot\rangle^{s}f(t)\|_{{p}}^{p}=\lM_{k,p}(t)$, we see that
\begin{equation}\label{Eq:bmCg}
\int_{\R^{d}}\langle v\rangle^{k}\overline{\bm{c}}_{\g}[f(t)](v)f^{p}(t,v)\d v  \leq   {\Lambda}_{0}(t)\lM_{k,p}(t),\end{equation}
with 
$$ {\Lambda}_{0}(t)=C_{d,\g,p,s}\,\|f(t)\|_{{q_{0}}} \in L^{1}([0,T]).$$
Coming back to \eqref{eq:lmS1}, we end up with the differential inequality
$$\dfrac{\d}{\d t}\lM_{k,p}(t) + 2K(p)\lD_{k+\g,p}(t) \leq \left(\bm{c}_{k,p}+\bm{C}_{k,p}\Lambda_{0}(t)\right)\lM_{k,p}(t), $$
so that assuming that $f_{\rm in} \in L^{p}_{k}(\R^{d})$ and remembering that $\bm{c}_{p}+\bm{C}_{p}\Lambda_{0}(t) \in L^{1}([0,T])$, we deduce from  Gr\"onwall's Lemma the bound
\begin{multline*}
\lM_{k,p}(t) + 2K(p)\int_{0}^{t}\lD_{k+\g,p}(s)\exp\left(\int_{s}^{t}\left(\bm{c}_{k,p}+\bm{C}_{k,p}\Lambda_{0}(\tau)\right)\d\tau\right)\d s\\
 \leq \lM_{k,p}(0)\exp\left(\int_{0}^{t}\left(\bm{c}_{k,p}+\bm{C}_{k,p}\Lambda_{0}(\tau)\right)\d \tau\right)\,,\end{multline*}
which gives the result.

\noindent \textit{\textbf{Hyp. 2.}} In this second case, observing that $\overline{\bm{c}}_{\g}[f(t)] \leq \bm{c}_{\g}[f(t)]$, we apply the $\e$-Poincar\'e inequality \eqref{eq:estimatcLP} with $g=f(t, \cdot)$, $\phi=\langle  \cdot \rangle^{\frac{k}{2}}f^{\frac{p}{2}}(t, \cdot)$, and we  deduce that, when $\e >0$,
 \begin{multline*}
 {  \int_{\R^{d}}\langle v\rangle^{k}\overline{\bm{c}}_{\g}[f(t)](v)\,f^{p}(t,v)\d v }\, \leq \e\,\lD_{k+\g,p}(t) \\
+C\left( \|f(t)\|_{{1}}  
 +\e^{-\frac{s}{1-s}}\|f(t)\|_{L^{q}_{{q|\g|}}}^{\frac{1}{1-s}}\right)\int_{\R^{d}}f(t,v)^{p}\langle v\rangle^{\g+k}\d v, \end{multline*}
 for some positive constant $C >0$, and where $s=\frac{d-q(d+\g)}{2q} \in (0,1)$. Proceeding now as in the proof of Theorem \ref{mps_g},
choosing $\e$ in such a way that {$ \bm{C}_{k,p} \e =K(p)$}, we deduce from \eqref{eq:lmS1} that
\begin{equation} \label{agg}
\frac{\d}{\d t}\lM_{k,p}(t) + K(p)\lD_{k+\g,p}(t) \leq 
\left(\bm{c}_{k,p}+\bm{C}_{k,p} \Lambda(t)\right)\lM_{k,p}(t)\,,
\end{equation}
with (using $C_\e >0$ for emphasizing the dependence w.r.t $\e$)
 {$$\Lambda(t):=C\|f_{\rm in}\|_{1}+C_{\e}\|\langle \cdot \rangle^{|\g|}f(t)\|_{q}^{\frac{1}{1-s}}=C\|f_{\rm in}\|_{1}+C_{\e}\|\langle \cdot \rangle^{|\g|}f(t)\|_{q}^{r} \in L^{1}([0,T]), $$}
(the last point is precisely the assumption of the Proposition). As before, we conclude thanks to Gr\"onwall's Lemma and integration of \eqref{agg}.  This proves \eqref{eq:LinftyLp} in the case $-d < \g <0$ under the two possible assumptions \textit{\textbf{Hyp. 1}} or \textit{\textbf{Hyp. 2}}. 
\medskip
 
Let us now show how the $L^{\infty}([0,T]\,,\,L^{p}_{k}(\R^{d}))$ estimate \eqref{eq:LinftyLp} is enough to deduce \eqref{eq:estimatcLP-ft}. Let $p >\frac{d}{d+\g+2}$  be fixed -- with the restriction $p < \frac{d}{d+\g}$
when we work under \textit{\textbf{Hyp. 1}.} 

Applying the $\e$-Poincar\'e inequality \eqref{eq:estimatcLP} with $g=f(t, \cdot)$, 
 we  deduce as above that (for any suitable $\phi$) 
\begin{multline*}
 { \int_{\R^{d}} {\bm{c}}_{\g}[f(t)](v)\,\phi^{2}(v)\d v }\, \leq \e\,\int_{\R^{d}}\left|\nabla \left(\langle v\rangle^{\frac{\g}{2}}\phi(v)\right)\right|^{2}\d v \\
+C\left( \|f(t )\|_{{1}}  
 +\e^{-\frac{s}{1-s}}\|\langle \cdot \rangle^{|\g|}f(t)\|_{\bar{q}}^{\frac{1}{1-s}}\right)\int_{\R^{d}}\phi^{2}(v)\langle v\rangle^{\g}\d v, \end{multline*}
 is valid for any {$\max\left(1,\frac{d}{d+\g+2}\right) < \bar{q} < \frac{d}{d+\g}$} with $s=\frac{d-\bar{q}(d+\g)}{2\bar{q}}.$ Using simple interpolation with  {$\max\left(1,\frac{d}{d+\g+2}\right) < \bar{q} <p$} one has
 $$\|\langle \cdot \rangle^{|\g|}f(t)\|_{\bar{q}} \leq \|\langle \cdot \rangle^{m}f(t)\|_{1}^{1-\theta_{0}}\|\langle \cdot \rangle^{\frac{k}{p}}f(t)\|_{p}^{\theta_{0}},$$
 with 
 $$\theta_{0}=\frac{p}{\bar{q}}\frac{\bar{q}-1}{p-1} \qquad \text{ and } \quad m=m(\bar{q})=\frac{p|\g|-\theta_{0}k}{p(1-\theta_{0})}.$$
Since $m=m(\bar{q})=\frac{\bar{q}(p-1)|\g|-k(\bar{q}-1)}{p-\bar{q}}$, the mapping $\bar{q} \mapsto m(\bar{q})$ is nondecreasing if $p|\g| \geq k$ and nonincreasing if $k\geq p|\g|.$ Thus, recalling that $\max\left(1,\frac{d}{d+\g+2}\right) < \bar{q} < \frac{d}{d+\g}$, one has
 \begin{equation}\label{eq:nuq}
 \overline{\nu}=\inf_{\bar{q}}m(\bar{q})=\begin{cases}\dfrac{d(p-1)|\g|-k(\g+2)^{-}}{d(p-1)-p(\g+2)^{-}} \qquad &\text{ if } k \leq p|\g|,\\
\\
\dfrac{(d(p-1)-k)|\g|}{d(p-1)+p\g} \qquad &\text{ if } k \geq p|\g|,\end{cases}\end{equation}
 {where $a^- := - a \, 1_{a<0}$.}
 
Thus, assuming $f_{\rm in} \in L^{1}_{\nu}(\R^{d})$ with $\nu > \overline{\nu}$, one can choose $\max\left(1,\frac{d}{d+\g+2}\right) < \bar{q} < \frac{d}{d+\g}$ with $m=m(\bar{q}) \leq \nu$ so that
 $$\sup_{t \in [0,T]}\|\langle \cdot \rangle^{m}f(t)\|_{1} < \infty.$$
 Then, thanks to \eqref{eq:LinftyLp}, 
 $$\sup_{t \in [0,T]}\|\langle \cdot\rangle^{|\g|}f(t)\|_{\bar{q}}^{\frac{1}{1-s}} \leq C_{T}(f_{\rm in}), $$
 where $C_{T}(f_{\rm in})$ depends on $\bm{C}_{T}(f_{\rm in})$ appearing in \eqref{eq:LinftyLp} and $\|f_{\rm in}\|_{L^{1}_{\nu}}$. This proves the result.
\end{proof}
{With this, we can deduce the following stability result for solutions to Landau equation. Note that the assumptions on the two considered initial data are not identical.}

 \begin{prop}\label{theo:chern2} Let $d$ be an integer, $d\ge3$,  and $\g$ be such that $-d \leq \g < -2$ and $\frac{d}{d+\g+2} < 2$. Let $f_{\rm in},g_{\rm in}$ and $f=f(t,v), g=g(t,v)$ define two  solutions to Eq. \eqref{LFD} in the sense of Definition \ref{defi:weak} with $f(0,\cdot) =f_{\rm in}$, $g(0,\cdot) =g_{\rm in}$. 
We consider one of the following two assumptions (with the convention $\frac{d}{d+\g} = \infty$ when $\g = -d$):  
\begin{enumerate}
\item[\textbf{Hyp. 1:}] there exists $T >0$ such that $f,g \in L^{1}([0,T]\,;\;L^{\frac{d}{d+\g}}(\R^{d}))$  {and $\frac{d}{d+\g} >2$};
\end{enumerate}
\begin{enumerate}
\item[\textbf{Hyp. 2:}] there exists $T >0$ such that 
$$\langle \cdot \rangle^{|\g|}\,f ; \,\langle \cdot \rangle^{|\g|}\,g \in L^{r}([0,T]\,;\,L^{q}(\R^{d})), \qquad \text{ with } \quad \frac{2}{r}+\frac{d}{q}=2+d+\g,$$
   where $r \in (1,\infty)$, $q \in \left(\frac{d}{2+d+\g},\frac{d}{d+\g}\right)$.\end{enumerate} 
Given $k > d$, assume moreover that
$$f_{\rm in} \in L^{2}_{k}(\R^{d}) \cap L^{1}_{N}(\R^{d}), \quad g_{\rm in} \in {L^{2}_{k+2|\g|}(\R^{d})}$$
with {$N > \max\left(\frac{d}{2},2,\frac{(d-k)|\g|}{d+2\g},\frac{(d-k)|\g|+2k}{d+2(\g+2)}\right).$}
Then,  there exists $\bm{C}_{T}(f_{\rm in},g_{\rm in}) >0$ depending on $d, \g, T, k,  K_0$, $\|f_{\rm in}\|_{L^{1}_{N}}, \|g_{\rm in}\|_{L^{1}_{2}}$,  $H(f_{\rm in}), H(g_{\rm in})$, $\|f_{\rm in}\|_{L^{2}_{k}}, \|g_{\rm in}\|_{L^{2}_{k+ 2|\g|}}$, and the Prodi-Serrin norms $\|f,g\|_{L^{1}_{t}(L^{\frac{d}{d+\g}})}$ or $\left\|\langle \cdot\rangle^{|\g|}f, \langle \cdot\rangle^{|\g|}g\right\|_{L^{r}_{t}(L^{q})}$ (and $q$),
such that
\begin{equation}\label{eq:Stability}
\|f(t)-g(t)\|_{L^{2}_{k}}^{2} \leq \bm{C}_{T}\|f_{\rm in}-g_{\rm in}\|_{L^{2}_{k}}^{2} \qquad \forall t \in [0,T].
\end{equation}
\end{prop}

\begin{rmq} Notice that the result still applies if $f$ and $g$ do not satisfy the \emph{same} hypothesis: it applies if $f$ satisfies \textbf{Hyp. 1} and $g$ satisfies \textbf{Hyp. 2} or the opposite. We stated the result in the above way for simplicity.\end{rmq}
 
\begin{rmq}\label{rmk:constraint} We point out that, in the Coulomb case $\g=-d$, the restriction $\frac{d}{d+\g+2} <2$ enforces $d=3.$ \end{rmq}

  \begin{proof}
 We consider two solutions $f,g$ with   initial data $f(0)=f_{\rm in}$ and $g(0)=g_{\rm in}$,
 and write $h=f-g$. {We notice first that, since $f_{\rm in}, g_{\rm in} \in L^{2}_{k}(\R^{d})$, estimate \eqref{eq:LinftyLp} holds true for both $f,g$ with $p=2$, where we recall that, under Assumption \textit{\textbf{Hyp. 1}}, we assume $2 < \frac{d}{d+\g}$. It ensures that 
$$f,g \in L^{\infty}([0,T]\;;\;L^{2}_{k}(\R^{d})).$$
This means that  for all $t \in [0,T]$, $h(t,\cdot)\in L^{\infty}([0,T]\,;\,L^{2}_{k}(\R^{d}))$. Furthermore, since $g_{\rm in} \in L^{2}_{k+2|\g|}(\R^{d})$, we deduce from Proposition \ref{prop:ChernINT} that  }
 \begin{equation}\label{eq:LinftyLpg}
   \sup_{t \in [0,T]} \int_{\R^{d}} g^{2}(t,v) \langle v\rangle^{k+ {2|\g|}} \d v +\frac{K_{0}}{2}\int_{0}^{T}\d \tau\int_{\R^{d}} \left|\nabla \left(\langle v\rangle^{\frac{ {k+|\g|}}{2}} g(\tau,v) \right)\right|^{2}\d v  < \infty\,.\end{equation}
We then investigate the evolution of  
 $$\mathscr{J}(t):=\int_{\R^{d}}\langle v\rangle^{k}h^{2}(t,v)\d v.$$ One has then
 \begin{equation*}\begin{split}
\dfrac{\d}{\d t}\mathscr{J}(t)&=2\int_{\R^{d}}\langle v\rangle^{k} h(t,v)\,\partial_{t}h(t,v)\d v\\
&=-2\int_{\R^{d}} \mathcal{A}[f] \nabla h \cdot \nabla \left(h\langle v\rangle^{k}\right)\d v -2\int_{\R^{d}}\mathcal{A}[h]\nabla g \cdot \nabla  \left( h\langle v\rangle^{k}\right)\d v \\
&\phantom{+} +2\int_{\R^{d}}h\, \bm{b}[f] \cdot \nabla  \left(h\langle v\rangle^{k}\right)\d v + 2\int_{\R^{d}}g\bm{b}[h] \cdot \nabla  \left(h\langle v\rangle^{k}\right)\d v\\
&=\mathscr{I}_{1}+\mathscr{I}_{2}+\mathscr{I}_{3}+\mathscr{I}_{4}. \end{split}\end{equation*}
 As in the proof of Lemma \ref{prop:Lp-esti} one has 
 $\mathscr{I}_{1}=\mathscr{I}_{1,1}+\mathscr{I}_{1,2}$ with
 $$\mathscr{I}_{1,1}=-2\int_{\R^{d}}\langle v\rangle^{k}\mathcal{A}[f]\nabla h\cdot \nabla h\d v, \qquad \mathscr{I}_{1,2}=-2k\int_{\R^{d}}\langle v\rangle^{k-2} h\mathcal{A}[f]\nabla h\cdot v\d v.$$
 From the diffusive properties of $\mathcal{A}[f]$,
 $$\int_{\R^{d}}\langle v\rangle^{k} \mathcal{A}[f] \nabla h\cdot \nabla h \d v \geq K_{0}\int_{\R^{d}}\langle v\rangle^{k+\g}\,|\nabla h|^{2} \d v.$$ 
  Now, 
$$\mathscr{I}_{1,2}=k\int_{\R^{d}}h^{2}(t,v)\nabla \cdot \left[\mathcal{A}[f]\langle v\rangle^{k-2}v\right]\d v.$$
Therefore, using the reasoning and notations of Lemma \ref{prop:Lp-esti} 
$$\mathscr{I}_{1,2}=k\int_{\R^{d}}h^{2}\langle v\rangle^{k-2}\left(\bm{b}[f]\cdot v\right)\d v +k \int_{\R^{d}}\langle v\rangle^{k-4}h^{2}\mathrm{Trace}\left(\mathcal{A}[f]\cdot \bm{A}(v)\right)\d v.$$
Using several integration by parts and using that $\nabla \cdot \bm{b}[f]=-\bm{c}_{\g}[f]$, one also shows that
$$
\mathscr{I}_{3}= {\int_{\R^{d}}\langle v\rangle^{k}\,h^{2}\bm{c}_{\g}[f]\d v } +k\int_{\R^{d}}\langle v\rangle^{k-2}h^{2}\left(\bm{b}[f]\cdot v\right)\d v$$
therefore
\begin{multline*}
\mathscr{I}_{1,2}+\mathscr{I}_{3}=k \int_{\R^{d}}\langle v\rangle^{k-4}h^{2}\mathrm{Trace}\left(\mathcal{A}[f]\cdot \bm{A}(v)\right)\d v
+  {  \int_{\R^{d}}\langle v\rangle^{k}h^{2}\bm{c}_{\g}[f]\d v} \\
+2k\int_{\R^{d}}\langle v\rangle^{k-2}h^{2}\left(\bm{b}[f]\cdot v\right)\d v.\end{multline*}
Using the estimate of $\mathrm{Trace}\left(\mathcal{A}[f]\cdot \bm{A}(v)\right)$ in Lemma \ref{prop:Lp-esti} and estimate \eqref{eq:I2B} for estimating the integral involving $\bm{b}[f]$, we deduce that
$$
\mathscr{I}_{1,2}+\mathscr{I}_{3} \leq  {C_{d,k,\g}}\left(\int_{\R^{d}}\langle v\rangle^{k-2}h^{2}\bm{c}_{\g+2}[f]\d v+\int_{\R^{d}}\langle v\rangle^{k-1}h^{2}\bm{c}_{\g+1}[f]\d v\right) 
+  {\int_{\R^{d}}\langle v\rangle^{k} h^{2}\bm{c}_{\g}[f]\d v.}
$$
Using Lemma \ref{lem:compa}, we deduce  {in the case $-d<\g<-2$} that there exist $C=C(d,k,\g), c=c(d,k,\g) >0$ (depending on $\|f_{\rm in}\|_{L^1_2(\R^d)}$) such that 
$$\mathscr{I}_{1,2}+\mathscr{I}_{3} \leq C \int_{\R^{d}}\langle v\rangle^{k}h^{2}\overline{\bm{c}}_{\g}[f]\d v + c\,\int_{\R^{d}}\langle v\rangle^{k}\,h^{2}\d v.$$
Thus, observing that $\overline{\bm{c}}_{\g}[f] \leq \bm{c}_{\g}[f]$, we get
\begin{equation}\label{eq:I1I30A}
\mathscr{I}_{1}+\mathscr{I}_{3} \leq -2K_{0}\int_{\R^{d}}\langle v\rangle^{k+\g}\,|\nabla h|^{2} \d v 
+C \int_{\R^{d}}\langle v\rangle^{k} h^{2}{\bm{c}}_{\g}[f](v)\d v + c \int_{\R^{d}}\langle v\rangle^{k}h^{2}\d v.\end{equation}
In the case $\g=-d$, Lemma \ref{lem:compa} implies that there exist $C=C(d,k), \tilde{C}=\tilde{C}(d,k), c=c(d,k) >0$ such that 
  $$\mathscr{I}_{1,2}+\mathscr{I}_{3} \leq C \int_{\R^{d}}\langle v\rangle^{k}h^{2}{\bm{c}}_{-d}[f]\d v + \tilde{C} \int_{\R^{d}}\langle v\rangle^{k-1}h^{2}\overline{\bm{c}}_{1-d}[f]\d v + c\,\int_{\R^{d}}\langle v\rangle^{k-1}\,h^{2}\d v.$$
  Note that \eqref{c1-d} holds when Assumption \textit{\textbf{Hyp. 1}} is in force and implies that 
  $$ \int_{\R^{d}}\langle v\rangle^{k-1}h^{2}\overline{\bm{c}}_{1-d}[f]\d v
  \le (d-1) |\S^{d-1}| \|f(t)\|_{{\infty}} \int_{\R^{d}}\langle v\rangle^{k}h^{2}\d v.$$
  Now, under Assumption \textit{\textbf{Hyp. 2}}, since $d=3$ (see Remark \ref{rmk:constraint}), we have $\overline{\bm{c}}_{1-d}[f(t)]\leq \bm{c}_{-2}[f(t)]$ and we deduce from Proposition \ref{propo:GG2} that there exist $C_0>0$ such that, for any $\e>0$ ($N>2$ being defined in the statement of the theorem),
 \begin{multline*} {\int_{\R^{d}}\langle v\rangle^{k-1}\overline{\bm{c}}_{1-d}[f(t)](v)\,h^{2}(t,v)\d v\, }  \leq \e\;\int_{\R^{d}}\left|\nabla \left(\langle v\rangle^{\frac{k-d}{2}} h(t,v)\right)\right|^{2}\d v \\
  +C_{0}\exp\left(\e^{-\frac{N}{N-2}}\,\lm_{N}(f(t))^{\frac{N}{N-2}}\right)\int_{\R^{d}}h(t,v)^{2}\langle v\rangle^{k-d}\d v \,,
   \end{multline*}
  Thus, in the case $\g=-d$, for any $\e>0$, 
\begin{multline}\label{eq:I1I30B}
\mathscr{I}_{1}+\mathscr{I}_{3} \leq -2K_{0}\int_{\R^{d}}\langle v\rangle^{k+\g}\,|\nabla h|^{2} \d v +\e\tilde{C}\;\int_{\R^{d}}\left|\nabla \left(\langle v\rangle^{\frac{k+\g}{2}} h(t,v)\right)\right|^{2}\d v \\
+C \int_{\R^{d}}\langle v\rangle^{k} h^{2}{\bm{c}}_{\g}[f](v)\d v + \Gamma_\e(t) \int_{\R^{d}}\langle v\rangle^{k}h^{2}\d v,\end{multline}
where $\Gamma_{\e}\in L^{1}([0,T])$. 

Finally, gathering \eqref{eq:I1I30A} and \eqref{eq:I1I30B}, we obtain for $-d\le \g<-2$ and for any $\e>0$,
\begin{multline}\label{eq:I1I30}
\mathscr{I}_{1}+\mathscr{I}_{3} \leq -2K_{0}\int_{\R^{d}}\langle v\rangle^{k+\g}\,|\nabla h|^{2} \d v +\e\tilde{C}\;\int_{\R^{d}}\left|\nabla \left(\langle v\rangle^{\frac{k+\g}{2}} h(t,v)\right)\right|^{2}\d v \\
+C \int_{\R^{d}}\langle v\rangle^{k} h^{2}{\bm{c}}_{\g}[f](v)\d v + \Gamma_\e(t) \int_{\R^{d}}\langle v\rangle^{k}h^{2}\d v,\end{multline}
with $\Gamma_{\e} \in L^{1}([0,T])$.

One now looks at $\mathscr{I}_{2}$, which is more delicate. One has again $\mathscr{I}_{2}=\mathscr{I}_{2,1}+\mathscr{I}_{2,2}$, with
$$\mathscr{I}_{2,1}=-2\int_{\R^{d}}\langle v\rangle^{k} \mathcal{A}[h]\nabla g \cdot \nabla h\,\d v, \qquad \qquad \mathscr{I}_{2,2}=-2k\int_{\R^{d}}\langle v\rangle^{k-2}\,h\,\mathcal{A}[h]\nabla g\cdot v\d v\,.$$
Writing $\langle v\rangle^{k-2} h \mathcal{A}[h]\nabla g \cdot v=\left(\langle v\rangle^{\frac{k-2}{2}} h \,v\right)\cdot \left(\langle v\rangle^{\frac{k-2}{2}}\mathcal{A}[h]\nabla g\right)$ and using Young's inequality, we deduce that
$$|\mathscr{I}_{2,2}| \leq k \int_{\R^{d}}\langle v\rangle^{k}h^{2}(t,v)\d v + k\int_{\R^{d}}\langle v\rangle^{k}\left|\mathcal{A}[h]\right|^{2}|\nabla g|^{2}\d v\,.$$
Similarly, with $\langle v\rangle^{k} \mathcal{A}[h]\nabla g \cdot \nabla h=\left(\langle v\rangle^{\frac{k+\g}{2}} \nabla h\right)\cdot \left(\langle v\rangle^{\frac{k-\g}{2}} \mathcal{A}[h]\nabla g\right)$, we deduce from Young's inequality that, for any $\delta >0$,
$$
|\mathscr{I}_{2,1}| \leq \delta \int_{\R^{d}}\langle v\rangle^{k+\g} \,|\nabla h|^{2}\d v + \frac{1}{\delta} \int_{\R^{d}}\langle v\rangle^{k-\g} |\mathcal{A}[h]|^{2}\,|\nabla g|^{2}\d v.$$
We deduce that there exists $C_{k} > 0$ such that, for any $\delta \in ]0,1]$, 
\begin{multline*}
|\mathscr{I}_{2}| \leq \delta \int_{\R^{d}}\langle v\rangle^{k+\g}\,|\nabla h|^{2}\d v + \frac{C_{k}}{\delta}\bigg( \int_{\R^{d}}\langle v\rangle^{k} h^{2}(t,v)\d v
+  \|\mathcal{A}[h]\|_{\infty}^{2}\int_{\R^{d}}\langle v\rangle^{k-\g}|\nabla g|^{2}\d v\bigg).\end{multline*}
Now, from Lemma \ref{lem:AbF}, Eq. \eqref{eq:AF0},
 there is $C >0$ such that, when $q > \frac{d}{d+2+\g}$ and $m > d\left(1-\frac{1}{q}\right)$,
$$\|\mathcal{A}[h]\|_{\infty} \leq C \|\langle \cdot \rangle^{m}h\|_{q}. $$
Applying this with  {$q=2$}, $m=\frac{k}{2}$, and this shows that, if  {$k>d$},
$$\|\mathcal{A}[h]\|_{\infty}^{2} \leq C^{2}\int_{\R^{d}}\langle v\rangle^{k}h^{2}(t,v)\d v , $$
and we deduce therefore that, for any $\delta \in ]0,1]$,
\begin{equation}\label{eq:I2}
|\mathscr{I}_{2}| \leq \delta   \int_{\R^{d}}\langle v\rangle^{k+\g} \,|\nabla h|^{2}\d v + {\Xi}_{\delta}(t)\int_{\R^{d}}\langle v\rangle^{k}h^{2}\d v ,
\end{equation}
where 
$${\Xi}_{\delta}(t)=\delta^{-1}C_{k}\left[1+C^{2}\int_{\R^{d}}\langle v\rangle^{k-\g}|\nabla g(t,v)|^{2}\d v\right].$$
Notice that \eqref{eq:LinftyLpg} implies that for any $\delta \in ]0,1]$,
$${\Xi}_{\delta} \in L^{1}([0,T]). $$
 One proceeds in the exact same way  for $\mathscr{I}_{4}=\mathscr{I}_{4,1}+\mathscr{I}_{4,2}$, with
 $$\mathscr{I}_{4,1}=2\int_{\R^{d}}\langle v\rangle^{k} g(t,v)\bm{b}[h] \cdot \nabla h \d v, \qquad \mathscr{I}_{4,2}=2k\int_{\R^{d}}\langle v\rangle^{k-2}h(t,v)g(t,v)\bm{b}[h] \cdot v\d v.$$
{To estimate $|\bm{b}[h]|$, we need to distinguish between the  case $2 > \frac{d}{d+\g+1}$ for which can apply \eqref{eq:BF0} to estimate $\|\bm{b}[h]\|_{\infty}$ in terms of $\|\langle \cdot \rangle^{m}h\|_{2}$ $(m > \frac{d}{2})$, {and the case $2 \leq \frac{d}{d+\g+1}$ for which we resort to \eqref{eq:BF1} or \eqref{eq:BF12}.} We focus here on this second case, the case $\bm{b}[h] \in L^{\infty}(\R^{d})$ being simpler (one can follow the line of the previous estimate for $\|\mathcal{A}[h]\|_{\infty}$, we leave it to the reader).} We treat in details first the case $2 < \frac{d}{d+\g+1}$. Clearly, 
$$\left|\mathscr{I}_{4,1}\right| \leq {2}\int_{\R^{d}}\left(\langle v\rangle^{\frac{k+\g}{2}} |\nabla h|\right)\,\left(g\langle v\rangle^{\frac{k-\g}{2}} \right)\,|\bm{b}[h]|\d v.$$
 {Let $p_{\g}^{*}$ be defined as in \eqref{eq:BF1} with the choice of $q=2$, i.e. $p^{*}_{\g}=\frac{2d}{d-2(1+\g+d)} > 2$. Let $p$ be such that $\frac{1}{p_{\g}^{*}}+\frac{1}{2}+\frac{1}{p}=1$, that is $p=\frac{d}{1+\g+d}$. One deduces from H\"older's inequality that}
$$\left|\mathscr{I}_{4,1}\right| \leq {2} \|\bm{b}[h]\|_{p^{*}_{\g}}\,\left\|\langle \cdot \rangle^{\frac{k+\g}{2}} |\nabla h|\,\right\|_{2}
\left\|\langle \cdot \rangle^{\frac{k-\g}{2}} g\right\|_{p}.$$
Now, a simple use of Young's inequality yields, for any $\delta>0$,
$$\left|\mathscr{I}_{4,1}\right| \leq \delta\int_{\R^{d}}\langle v\rangle^{ k+\g } \left|\nabla h(t,v)\right|^{2}\d v +\frac{1}{\delta}\left\|\bm{b}[h]\right\|_{p^{*}_{\g}}^{2}\left\|\langle \cdot\rangle^{\frac{k-\g}{2}}\,g(t) \right\|_{p}^{2}.$$
We now use \eqref{eq:BF1} to deduce that $\|\bm{b}[h]\|_{p^{*}_{\g}}^{2} \leq C(f_{\rm in},g_{\rm in})\|h\|_{2}^{2}$,
 where $C(f_{\rm in},g_{\rm in})$ depends on $d,\g$ and $\|f_{\rm in}\|_{1},\|g_{\rm in}\|_{1}.$ It remains only to estimate 
$$\left\|\langle \cdot\rangle^{\frac{k-\g}{2}}\,g(t) \right\|_{p}^{2}, \qquad  { p=\frac{d}{1+\g+d}}.$$
One checks easily that $p < \frac{2d}{d-2}$ {under the assumption $2>\frac{d}{d+\g+2}$, while  $p >2$ clearly holds by assumption. Then, by interpolation and Sobolev embedding, there exist $C >0$ and $\alpha \in (0,1)$ such that 
$$\left\|\langle \cdot \rangle^{\frac{k-\g}{2}}g(t)\right\|_{p} \leq C\,\left\|\langle \cdot \rangle^{\frac{k-\g}{2}}g(t)\right\|_{2}^{1-\alpha}\,\left\|\nabla \left(\langle \cdot \rangle^{\frac{k-\g }{2}}g(t)\right)\right\|_{2}^{\alpha}.$$}
Thanks to Young's inequality, we deduce that
$$\left\|\langle \cdot \rangle^{\frac{k-\g}{2}}g(t)\right\|_{p}^{2} \leq C_{0}\left( {\left\|\langle \cdot \rangle^{\frac{k-\g}{2}}g(t)\right\|_{2}^{2}} + \int_{\R^{d}}\left|\nabla \left(\langle v\rangle^{\frac{k-\g}{2}}g(t,v)\right)\right|^{2}\d v\right).$$
We showed that, for any $\delta >0$, 
$$
\left|\mathscr{I}_{4,1}\right| \leq  \delta\int_{\R^{d}}\langle v\rangle^{ k+\g }\left|\nabla h(t,v)\right|^{2}\d v + \Theta_{\delta}(t)\int_{\R^{d}}\langle v\rangle^{k+\g}h^{2}(t,v)\d v ,
$$ where
$$\Theta_{\delta}(t)=\delta^{-1}C_{d,\g,k}\left(  {\,\left\|\langle \cdot \rangle^{\frac{k-\g}{2}}g(t)\right\|_{2}^{2}}+\int_{\R^{d}}\left|\nabla \left(\langle v\rangle^{\frac{k-\g}{2}}g(t,v)\right)\right|^{2}\d v\right) ,$$
for some positive constant $C_{d,\g,k}$ depending on $d,\g,k$ and $\|f_{\rm in}\|_{1},\|g_{\rm in}\|_{1}$.  Notice that, according to   \eqref{eq:LinftyLpg}, one has $\Theta_{\delta} \in L^{1}([0,T])$. In the same way, 
\begin{equation*}\begin{split}
\left|\mathscr{I}_{4,2}\right| &\leq  2k\|\bm{b}[h]\|_{p^{*}_{\g}}\,\left\|\langle \cdot \rangle^{\frac{k-1}{2}}h\,\right\|_{2}
\left\|\langle \cdot \rangle^{\frac{k-1}{2}}g\right\|_{p} \\
&\leq k \int_{\R^{d}} {\langle v\rangle^{k-1} }h^{2}(t,v)\,\d v + k\|\bm{b}[h]\|_{p^{*}_{\g}}^{2}
\left\|\langle \cdot \rangle^{\frac{k-1}{2}}g\right\|_{p}^{2}\,.\end{split}\end{equation*}
As before, this yields then the estimate
$$\left|\mathscr{I}_{4,2}\right| \leq \Theta(t)\,\int_{\R^{d}}\langle v\rangle^{k}h^{2}(t,v)\,\d v$$
where
$$\Theta(t)=C_{d,\g,k}\left(1+ {\,\left\|\langle \cdot \rangle^{\frac{k-1}{2}}g(t)\right\|_{2}^{2}}+\int_{\R^{d}}\left|\nabla \left(\langle v\rangle^{\frac{k-1}{2}}g(t,v)\right)\right|^{2}\d v\right)$$
for some positive constant $C_{d,\g,k}>0$ depending on $d,\g,k$ and $\|f_{\rm in}\|_{L^{1}_{2}}$. In particular, $\Theta \in L^{1}([0,T])$.
Combining these two estimates,  for any $\delta >0$, there is $\Lambda_{\delta} \in L^{1}([0,T])$ such that
\begin{equation}\label{eq:I4}
|\mathscr{I}_{4}| \leq \delta  \int_{\R^{d}}\langle v\rangle^{ k+\g } \left|\nabla h(t,v)\right|^{2}\d v+{\Lambda}_{\delta}(t)\int_{\R^{d}}\langle v\rangle^{k}h^{2}\d v\,.\end{equation}
Choosing  {$\delta=\delta_{0}= \min(1,\frac{K_{0}}{2})$}
 in \eqref{eq:I2}-\eqref{eq:I4} and combining this with \eqref{eq:I1I30}, one sees that 
\begin{multline*}
\dfrac{\d}{\d t}\int_{\R^{d}}\langle v\rangle^{k} h^{2}(t,v)\d v +K_{0}\int_{\R^{d}}\langle v\rangle^{k+\g} \left|\nabla h(t,v)\right|^{2}\d v \leq  {\e\tilde{C}\;\int_{\R^{d}}\left|\nabla \left(\langle v\rangle^{\frac{k+\g}{2}} h(t,v)\right)\right|^{2}\d v}\\
+ {\bm{\Xi}_{\e}(t) \int_{\R^{d}}\langle v\rangle^{k} h^2(t,v)\d v } +C \int_{\R^{d}}\langle v\rangle^{k}h^{2}(t,v)\overline{\bm{c}}_{\g}[f(t)](v)\d v \end{multline*}
with 
$$ {\bm{\Xi}_{\e}(t)={\Lambda}_{\delta_{0}}(t)+{\Xi}_{\delta_{0}}(t)+\Gamma_{\e}(t), \qquad \bm{\Xi}_{\e} \in L^{1}([0,T])}.$$
 {Since $f_{\rm in} \in L^1_{N}(\R^d)$, we may now use} the $\e$-Poincar\'e inequality \eqref{eq:estimatcLP-ft} with $\phi^{2}=\langle \cdot\rangle^{k}h^{2}$.   {Notice that the definition of $\overline{\nu}$ in \eqref{eq:nuq} with $p=2$ yields $\bar{\nu}=\frac{d|\g|+k\g+2k}{d+2(\g+2)}$ if $k \leq 2|\g|$ while $\bar{\nu}=\frac{(d-k)|\g|}{d+2\g}$ if $k \geq 2|\g|$.} We deduce that, for any $\e >0$, 
\begin{multline*}
\dfrac{\d}{\d t}\int_{\R^{d}}\langle v\rangle^{k}h^{2}(t,v)\d v + K_{0}\int_{\R^{d}}\langle v\rangle^{k+\g}\left|\nabla h(t,v)\right|^{2}\d v\\
\leq C\e \int_{\R^{d}}\left|\nabla \left(\langle v\rangle^{\frac{k+\g}{2}} h(t,v)\right)\right|^{2}\d v
+C_{T}(f_{\rm in},\e)\int_{\R^{d}}\langle v\rangle^{k+\g} h^{2}(t,v)\d v\\
+\bm{\Xi}_{\e}(t) \int_{\R^{d}}\langle v\rangle^{k}h^{2}(t,v)\d v ,
\end{multline*}
where $C_{T}(f_{\rm in},\e) >0$ is a positive constant depending only on $\e,T,\|f_{\rm in}\|_{L^{2}_{k}},\g$. Recalling that there are $c_{0},c_{1} >0$ such that 
$$\int_{\R^{d}}\langle v\rangle^{k+\g}\,|\nabla h|^{2}\d v \geq  c_{0}\int_{\R^{d}}\left|\nabla \left(\langle v\rangle^{\frac{k+\g}{2}}h\right)\right|^{2}\d v -c_{1}\int_{\R^{d}}\langle v\rangle^{k+\g-2}h^{2}\d v , $$
one chooses then $\e=\e_{0} > 0$ such that $C\e_{0}=\frac{1}{2}K_{0}c_{0}$ to deduce that
\begin{multline*}
\dfrac{\d}{\d t}\int_{\R^{d}}\langle v\rangle^{k}h^{2}(t,v)\d v +\frac{1}{2}K_{0}\textcolor{olive}{c_0}\int_{\R^{d}}\left|\nabla \left(\langle v\rangle^{\frac{k+\g}{2}}h(t,v)\right)\right|^{2}\d v \leq \bm{\Lambda}(t)\int_{\R^{d}}\langle v\rangle^{k}h^{2}(t,v)\d v ,\end{multline*}
with 
$$\bm{\Lambda}(t)=\bm{\Xi}_{\e_{0}}(t)+ {\frac{1}{2}K_{0}\,c_{1} } +C_{T}(f_{\rm in},\e_{0}), \qquad \bm{\Lambda} \in L^{1}([0,T]).$$
We conclude by a Gronwall argument that, for any $t \in [0,T]$,
$$\int_{\R^{d}}\langle v\rangle^{k}h^{2}(t,v)\d v \leq \bm{C}_{T}\int_{\R^{d}}\langle v\rangle^{k}h^{2}(0,v)\d v, \qquad \bm{C}_{T}:=\exp\left(\int_{0}^{T}\bm{\Lambda}(t)\d t\right),$$
which gives the result. 
\medskip

The case $\frac{d}{d+\g+1}=2$ is treated in the same way using now the weighted estimate \eqref{eq:BF12} for $q=2$. Given $\frac{2d}{d+2} <  \ell < 2$ one defines $\bar{p}^{*}_{\g}=\frac{2\ell}{2-\ell}$ and for  $p$ such that $\frac{1}{\bar{p}_{\g}^{*}}+\frac{1}{2}+\frac{1}{p}=1$, that is $p=\ell'=\frac{\ell}{\ell-1}$, we get, as before,  for any $\delta>0$,
\begin{equation*}\begin{split}
\left|\mathscr{I}_{4,1}\right| &\leq \delta\int_{\R^{d}}\langle v\rangle^{ k+\g } \left|\nabla h(t,v)\right|^{2}\d v +\frac{1}{\delta}\left\|\bm{b}[h]\right\|_{p^{*}_{\g}}^{2}\left\|\langle \cdot\rangle^{\frac{k-\g}{2}}\,g(t) \right\|_{p}^{2}\\ 
&  {\leq  \delta\int_{\R^{d}}\langle v\rangle^{ k+\g } \left|\nabla h(t,v)\right|^{2}\d v +\frac{C}{\delta}\|\langle \cdot \rangle^{m}h\|_{2}^{2} \left\|\langle \cdot\rangle^{\frac{k-\g}{2}}\,g(t) \right\|_{p}^{2} }
\end{split}\end{equation*}
for $m > |1+\g|=\frac{d}{2}$, where we used \eqref{eq:BF12} in the case $q=2$. The estimate for $\left\|\langle \cdot\rangle^{\frac{k-\g}{2}}\,g(t) \right\|_{p}^{2}$ with $p=\frac{\ell}{\ell-1}$ is made as before since $2 < p < \frac{2d}{d-2}$ (because $\frac{2d}{d+2} < \ell < 2$). Details are left to the reader.
\end{proof}
 
As a consequence, we immediately deduce our main uniqueness result as stated in the Introduction.

\begin{proof}[Proof of Theorem \ref{theo:unique}] The proof is a simple consequence of the above stability inequality  and the fact that $g_{\rm in}=f_{\rm in}$.\end{proof}

\section{Further comments}\label{sec:comm}
  
In this last section we give an informal discussion of some additional features of our contribution. Specifically, we explain how the results of the previous sections allow to recover well-known results in the case of moderately soft potentials $-2 \leq \g <0$, and then we discuss the endpoint case of Prodi-Serrin criterion for which $r=\infty$.

\subsection{About the moderate soft potential case}\label{sec:moderate} We illustrate here how Prodi-Serrin's criterion is satisfied by any weak solution constructed with an approximation procedure, as in \cite{DesvJFA} to the Landau equation, in the moderate soft potential case:
$$-2 \leq \g < 0.$$
It is already a well-known fact that Landau equation is globally well-posed in this case (weak solutions are global and unique under suitable assumptions) and that apperance/propagation of $L^{p}$-norm occurs in this case. Moreover, as shown in the recent contribution \cite{ABL}, such appearance of $L^{p}$-bounds implies the appearance of pointwise bounds for solutions to \eqref{LFD}. The key point which makes the Prodi-Serrin criterion satisfied is the \emph{a priori} estimate related to the weighted Fisher information obtained by the third author in \cite{DesvJFA}, see Corollary \ref{cor:Desv} which, for $-2 \leq \g < 0$ is strong enough to ensure one of the Prodi-Serrin criterion to hold.

\begin{prop} Let $f_{\rm in}$ and $f=f(t,v)$ a weak solution to equation \eqref{eq:Landau} in the sense of Definition \ref{defi:weak} . Then the following holds:
\begin{enumerate}
\item Assume that $-2 < \g <0$ and $f_{\rm in} \in L^{1}_{ {s}}(\R^{d})$ with $s=\frac{\g^{2}}{2(\g+2)}$, then
$$f \in L^{1}([0,T]\,,\,L^{\frac{d}{d+\g}}(\R^{d})), \qquad \forall T >0 . $$
\item For $-2 \leq \g < 0$, assume that there is some $\delta >0$ such that $f_{\rm in} \in L^{1}_{|\g|+\delta}(\R^{d})$. Then, there is a pair $(r,q)$ with $r >1$, $1 < q <\frac{d}{d+\g}$
 and $\frac{2}{r}+\frac{d}{q}=d+\g+2$ such that
$$\langle \cdot \rangle^{|\g|}f \in L^{r}([0,T]\,,\,L^{q}(\R^{d})), \qquad \forall T >0.$$
\end{enumerate}
\end{prop} 

\begin{proof} Recall that  Corollary \ref{cor:Desv} ensures that the weak solution $f$ satisfies
\begin{equation}\label{eq:Fis}
\int_{0}^{T}\d t \int_{\R^{d}}\left|\nabla \left(\langle v\rangle^{\frac{\g}{2}}\sqrt{f(t,v)}\right)\right|^{2}\d v < \infty\,. \end{equation}
We begin with Assumption 1 (and $-2 < \g <0$). 
Thanks to Sobolev inequality
$$\|h\|_{\frac{2d}{d-2}}^2 \leq C_{\mathrm{Sob}}\|\nabla h\|_{2}^2, \qquad \forall h \in \dot{\H}^1(\R^{d}) $$
(and since $\g >-2$), we can use the interpolation (H\"older's) inequality
$$\|f(t)\|_{\frac{d}{d+\g}}=\|\sqrt{f(t)}\|_{\frac{2d}{d+\g}}^{2} \leq \left\|\langle \cdot \rangle^{a}\sqrt{f(t)}\right\|_{2}^{2(1-\alpha)}\,\left\|\langle \cdot\rangle^{\frac{\g}{2}}\sqrt{f(t)}\right\|_{\frac{2d}{d-2}}^{2\alpha}$$
with $\frac{d+\g}{2d}=\frac{1-\alpha}{2}+\frac{d-2}{2d}\alpha,$ $a(1-\alpha)+\frac{\g}{2}\alpha=0,$ which means $\alpha=\frac{|\g|}{2}$ and $a=\frac{\g^{2}}{2(2+\g)}$. This, combined with the above Sobolev inequality, implies that
$$\|f(t)\|_{\frac{d}{d+\g}} \leq C_{\mathrm{Sob}}^{\frac{|\g|}{2}}\left[\sup_{t \in [0,T]}\lm_{\frac{|\g|^{2}}{2+\g}}(f(t))\right]^{2-|\g|}\,\left\|\nabla \left(\langle \cdot \rangle^{\frac{|\g|}{2}}\sqrt{f(t)}\right)\right\|_{2}^{|\g|}.$$
Since $|\g| < 2$, we see from \eqref{eq:Fis} that  $\|f(\cdot)\|_{\frac{d}{d+\g}} \in L^{1}([0,T])$ and this proves the first point.
\medskip

Now, for the second Prodi-Serrin criterion, we still resort to \eqref{eq:Fis} but now with the weighted interpolation inequality
$$\|\langle \cdot \rangle^{|\g|}f(t)\|_{q} \leq \|\langle \cdot \rangle^{\beta}f(t)\|_{1}^{1-\theta}\,\left\|\langle \cdot \rangle^{\g}f(t)\right\|_{\frac{d}{d-2}}^{\theta} \leq \|\langle \cdot \rangle^{\beta}f(t)\|_{1}^{1-\theta}\,\left\|\langle \cdot \rangle^{\frac{\g}{2}}\sqrt{f(t)}\right\|_{ {\frac{2d}{d-2}}}^{2\theta} $$
with 
$$\frac{1}{q}=1-\theta+\frac{d-2}{d}\theta=1-\frac{2}{d}\theta,  \qquad |\g|=\beta(1-\theta)+\g\theta.$$ 
We assume that $1 < q < \frac{d}{d+\g}$ so that $0 < \theta < \frac{|\g|}{2} \leq 1$. We choose then 
$$\frac{1}{r}=1+\frac{\g}{2}+\theta <1,$$
so that
$$\|\langle \cdot \rangle^{|\g|}f(t)\|_{q}^{r} \leq C_{\rm Sob}^{r\theta}\,\lm_{\beta}(f(t))^{r(1-\theta)}\,\left\|\nabla \left(\langle \cdot \rangle^{\frac{|\g|}{2}}\sqrt{f(t)}\right)\right\|_{2}^{2r\theta}$$
while
$$\frac{d}{q}+\frac{2}{r}=\frac{d}{q}+2+\g+2\theta=d+\g+2 , $$
which makes $(r,q)$ an admissible Prodi-Serrin pair. Now, since $2r\theta \leq 2$, one sees that
$$\sup_{t \in [0,T]}\lm_{\beta}(f(t)) < \infty \implies \|\langle \cdot \rangle^{|\g|}f(t)\|_{q}
 \in L^{r}([0,T]).$$ 

 One notices now that  
{$\beta = |\g|\, \frac{1 + \frac{d}2\, (1 - q^{-1})}{1 + \frac{d}2\, (1 - q^{-1})}$. We see that if $\g >-2$,
we can get  $\beta = 2$ by choosing $q>1$ sufficiently close to $1$.  If $\g=-2$, we get (for some given $\delta>0$) 
$\beta = 2 + \delta$ by also choosing $q>1$ sufficiently close to $1$. This proves the result.}
\end{proof}
 
\begin{rmq} For $-2 < \g <0$, the second point here above means that Prodi-Serrin's criterion $L^{r}_{t}(L^{q})$ with $r >1$ is satisfied for some pair $(r,q)$ for \emph{any admissible} initial datum $f_{\rm in} \in L^{1}_{2}(\R^{d})$ whereas, for $\g=-2$, some additional moment assumption $f_{\rm in} \in L^{1}_{2+\delta}(\R^{d})$ is needed. Notice that, arguing as in \cite{ABL} and resorting to the de la Vall\'ee-Poussin criterion, this additional assumption can be removed. Even if the first point of the Proposition seems to yield a non optimal moment assumption on $f_{\rm in}$, we still believe it provides interesting information on the link between the Prodi-Serrin criterion in the endpoint case $r=1,q=\frac{d}{d+\g}$, and the Fisher information estimate of Corollary \ref{cor:Desv}.\end{rmq}

\subsection{About the critical endpoint $r=\infty,$ $q=\frac{d}{d+\g+2}$}\label{sec:endpoint}

We consider $d\in \N$, $d \ge 3$, and $\g \in [-d, -2)$.   In the endpoint case for which $r=\infty$ and $q=\frac{d}{d+\g+2}$, it is actually  possible to obtain a result similar to that of Theorem \ref{theo:PSNS}.  Indeed, under some smallness assumption on 
$$\|\langle \cdot \rangle^{|\g|}f\|_{L^{\infty}([0,T]\,,\,L^{\frac{d}{d+\g+2}}(\R^{d}))},$$
 the appearance and propagation of $L^{p}$-norms which is the cornerstone of our analysis still holds true:

\begin{prop}\label{theo:PSend} 
We consider $d\in \N$, $d \ge 3$ and 
{$\g \in [-d, -2)$}. Let $f_{\rm in}$ and $f=f(t,v)$ be a weak solution to equation \eqref{eq:Landau} in the sense of Definition \ref{defi:weak} with 
$f_{\rm in} \in L^{1}_{s}(\R^{d})$ $(s >2).$ Additionally, assume that there exists  $T>0$ such that
$$
\langle \cdot\rangle^{|\g|}\, f \in L^{\infty}([0,T]\;;\;L^{\frac{d}{d+\g+2}}(\R^{d})).
$$
Then, given $k \geq0$, $p > 1$, there exists some \emph{explicit} $\delta >0$ (depending on both $d, \g, k,p$) such that, if 
\begin{equation}\label{eq:Serrinend}
\sup_{t \in (0,T)}\left\|\langle\cdot\rangle^{|\g|}f(t,\cdot)\right\|_{\frac{d}{d+\g+2}} \leq  {\delta}K_{0}\, ,
\end{equation}
and  $$f_{\rm in} \in L^{p}_{k}(\R^{d}),$$ one has, for any $t \in [0,T]$,
\begin{equation}\label{eq:LinftyLp-end}
\int_{\R^{d}} f^p(t,v) \langle v\rangle^{k} \d v +C_{\delta,p,k}K_{0}\int_{0}^{t}\d \tau\int_{\R^{d}} \left|\nabla \left(\langle v\rangle^{\frac{k+\g}{2}} f(\tau,v)^{\frac{p}{2}}\right)\right|^{2}\d v  \leq \bm{C}(T,f_{\rm in},\delta)\,,\,\end{equation}
where $\,\bm{C}(T,f_{\rm in},\delta)$ is an explicit positive constant depending on $d,\g, T, p, k, K_0, \delta,\|f_{\rm in}\|_{L^{p}_{k}},\|f_{\rm in}\|_{L^{1}_{s}}$ and the Prodi-Serrin norms $\|f\|_{L^{\infty}_{t}(L_v^{\frac{d}{d+\g+2}})}$,
 while $C_{\delta,p,k}$ is an explicit constant depending only on $p,\delta$ and $k$. 
\end{prop}
 
\begin{proof} We set $q_{0}=\frac{d}{d+2+\g}$ and recall the evolution of $L^{p}$ norms given in \eqref{eq:lmS1}:
\begin{multline}\label{eq:lmS2}
\dfrac{\d}{\d t}\lM_{k,p}(t) + 2K(p)\lD_{k+\g,p}(t) \leq \bm{c}_{k,p}\lM_{k,p}(t)  +\bm{C}_{k,p}\int_{\R^{d}}\langle v\rangle^{k}\overline{\bm{c}}_{\g}[f(t)](v)f^{p}(t,v)\d v\,.
\end{multline}
As in the proof of Proposition \ref{mps_g}, the key point is to evaluate the singular term 
$$\int_{\R^{d}} { \langle\cdot\rangle^{k}} \,\overline{\bm{c}}_{\g}[f(t)]f^{p}(t,v)\d v$$ 
in terms of $\|\langle\cdot\rangle^{|\g|}f(t,\cdot)\|_{q_{0}}$.  We argue as in the proof of the  $\e$-Poincar\'e inequality to estimate
$$\int_{\R^{d}}\overline{\bm{c}}_{\g}[F]\psi^{2}\d v$$
with 
$$F=\langle\cdot\rangle^{-\g}f(t,v), \qquad \psi=\langle \cdot\rangle^{\frac{\g+k}{2}}f^{\frac{p}{2}}.$$
 {Indeed, since $\alpha_{1,\g} \leq \langle \vet \rangle^{\g} \leq \alpha_{2,\g}\langle v \rangle^{\g}$ when $|v-v_*| \le 1$ (for some constants $\alpha_{1,\g} , \alpha_{2,\g} >0$) , the estimate
on $\int_{\R^{d}}\overline{\bm{c}}_{\g}[F]\psi^{2}\d v$ yields an estimate on 
{$\int_{\R^{d}}  \langle \cdot \rangle^{k} \overline{\bm{c}}_{\g}[f(t,\cdot)]\, f^{p}(t,v) \d v$}.}

We can use Hardy-Littlewood-Sobolev inequality \eqref{eq:HLS}  to get
$$\int_{\R^{d}}\overline{\bm{c}}_{\g}[F]\psi^{2}\d v \leq C_{\textrm{HLS}}\|F\|_{{q_{0}}}\|\psi^{2}\|_{{r}}, \qquad \frac{1}{r}=2-\frac{1}{q_{0}}+\frac{\g}{d}=\frac{d-2}{d}.$$
This yields
$$
\int_{\R^{d}}\overline{\bm{c}}_{\g}[F]\psi^{2}\d v \leq C_{\textrm{HLS}}\|F\|_{{q_{0}}}\|\psi\|_{{2r}}^{2} \leq C_{d,\g}\|F\|_{{q_{0}}}\|\nabla \psi\|_{{2}}^{2}, $$
since $\frac{2d}{d-2}$ is the critical exponent for the Sobolev embedding and $C_{d,\g}=C_{\textrm{HLS}}C_{\text{Sob}}$.  Moreover, since $\psi= \langle \cdot\rangle^{\frac{\g+k}{2}}f^{\frac{p}{2}}$, we deduce from \eqref{eq:lmS2} that
$$
  \frac{\d}{\d t}\lM_{k,p}(t)+2K(p)\lD_{k+\g,p}(t) \leq 
\bm{C} _{k,p,d,\g}\|F(t)\|_{q_{0}}\lD_{k+\g,p}(t)\,+\bm{c}_{k,p}\,\lM_{k,p}(t)\,,$$
for some positive constant $\bm{C}_{k,p,d,\g}$ depending on $k,p,d,\g$, related to $\bm{C}_{k,p}C_{d,\g}$ and the constants $\alpha_{1,\g},\alpha_{2,\g}.$ Therefore, assuming that
\begin{equation}\label{eq:Kp}\sup_{t \in [0,T]}\|F(t)\|_{{q_{0}}} \leq {\delta\,K_{0}} \qquad \text{ with } \qquad \delta < {\frac{2(p-1)}{p\bm{C}_{k,p,d,\g}} },
\end{equation}
one obtains, recalling that $K(p)=K_{0}\frac{p-1}{p}$,
$$
\frac{\d}{\d t}\lM_{k,p}(t)+ { \left(2\frac{p-1}{p}-\delta\,   \bm{C}_{k,p,d,\g}  \right)K_{0}}\,\lD_{k+\g,p}(t) \leq \bm{c}_{k,p}\,\lM_{k,p}(t), \,
$$
which allows to conclude exactly as in the proof of Proposition \ref{prop:ChernINT}.  
\medskip

We now briefly explain how to modify the above proof in the case $\g=-d$, when $d=3$. {We first observe that $\lM_{k,p}$ now satisfies the differential inequality
\begin{multline*}\frac{\d}{\d t}\lM_{k,p}(t)+2K(p)\lD_{k-3,p}(t) \leq  
\bm{c}_{k,p}\,\lM_{k,p}(t) + C_{k,p} \int_{\R^{3}} { \langle v\rangle^{k-1}} \,\overline{\bm{c}}_{-2}[f(t)](v)f^{p}(t,v)\d v\\
+ C_{k,p} \int_{\R^{3}} { \langle v\rangle^{k}} \,f^{p+1}(t,v)\d v \, .\end{multline*}
Then, we recall (see Proposition \ref{propo:GG2}) the estimate
$$  \int_{\R^{3}} { \langle v\rangle^{k-1}} \,\overline{\bm{c}}_{-2}[f(t)](v) f^{p}(t,v)\d v   \le \var\, \lD_{k-3,p}(t) + C(\var, {\bm m}_{\eta}(t))\, \lM_{k,p}(t) , $$
which holds for any $\var>0$ and any $\eta >2$. Finally, we notice
\begin{equation*}\begin{split}  
 \int_{\R^{3}} { \langle v\rangle^{k}} \,f^{p+1}(t,v)\d v&=\int_{\R^{3}} { \langle v\rangle^{3}} \,f(t,v)\left[\langle v\rangle^{\frac{k-3}{2}}f^{\frac{p}{2}}(t,v)\right]^{2}\d v \\
 &\le \|  \langle\cdot\rangle^{3} \,f(t)\|_{\frac{3}{2}} \, \|  \langle\cdot\rangle^{\frac{k-3}2} \,f^{\frac{p}2}(t) \|_{6}^2
 \le C_{\text{Sob}} \, \delta \,K_0\,  \lD_{k-3,p}(t)  . \end{split}\end{equation*}
This proves the result.}
\end{proof}

\begin{rmq} Notice that the above assumption \eqref{eq:Serrinend} is not optimal. Indeed, with the notations of the proof, and decomposing further 
$$F=F\ind_{F>R}+F\ind_{F\leq R}\,,$$
after reproducing the aforementioned computations, it is possible to replace \eqref{eq:Serrinend} with an estimate of the type 
\begin{equation}\label{eq:End}
\limsup_{R\to\infty}\sup_{t\in [0,T)}\|F(t)\ind_{F(t)>R}\|_{\frac{d}{d+2+\g}} \leq {\delta\,K_{0} }\,, \qquad F(t,v)=\langle v\rangle^{|\g|}f(t,v)\,,
\end{equation}
which is more general than some mere \emph{uniform integrability} of the family $(F^{q_{0}}(t))_{t\in [0,T]}$ since we do not assume the above $\limsup$ to vanish. In particular, the conclusion of Proposition \ref{theo:PSend} holds true for instance if
$$\langle \cdot \rangle^{|\g|}f(t,\cdot) \in L^{\infty}((0,T)\;;\;L^{\frac{d}{d+\g+2}}(\log L)^{\alpha}(\R^{d}) ) . $$
We do not elaborate more in this direction. 
\end{rmq}

Finally, for the Navier-Stokes equation, a sharper endpoint estimate than Theorem \ref{theo:PSNS} has been derived in \cite{seregin} by a compactness argument. This result has been made quantitative very recently in \cite{tao1,palasek} and appears interesting to inquire about the Landau equation counterpart.

\subsection*{Data availability}

No data was used for the research described in the article.

\appendix

 \section{Facts about solutions to the Landau equation}\label{sec:known}

We collect several mathematical results about the solutions to the Landau equation in the range of parameters we are dealing with here: $- d\leq \g <0$.  These results are meant to serve as a mathematical tool-box for the core of the paper.

\begin{defi} \label{defi:fin} 
Let $f_{\rm in} \in L^{1}_{2}(\R^{d}) \cap L\log L(\R^{d})$ be given and nonnegative with 
$$\varrho_{\rm in}:=\int_{\R^{d}}f_{\rm in}(v)\d v >0, \qquad E_{\rm in}:=\int_{\R^{d}}f_{\rm in}(v)|v|^{2}\d v,$$
and $$H(f_{\rm in}):=\int_{\R^{d}}f_{\rm in}(v)\log f_{\rm in}(v)\d v.$$
We say that $f \in \mathcal{Y}_{0}(f_{\mathrm{\rm in}})$ if $f\in L^{1}_{2}(\R^{d})$ and 
\begin{equation*}\label{eq0}
\int_{\R^{d}}f(v)\left(\begin{array}{c}1 \\v \\ |v|^{2}\end{array}\right)\d v=\int_{\R^{d}}f_{\mathrm{\rm in}}(v)\left(\begin{array}{c}1 \\v \\ |v|^{2}\end{array}\right)\d v,
\end{equation*}
and {$H(f) \leq H(f_{\mathrm{\rm in}}).$}
\end{defi}

{We observe that at the formal level, a solution $f=f(t,v)$ of the Landau equation with initial datum $f_{\rm in} \in L^{1}_{2}(\R^{d}) \cap L\log L(\R^{d})$ satisfies 
$$f(t) \in \mathcal{Y}_{0}(f_{\rm in}) \qquad \forall t \geq 0.$$}

For general estimates regarding the class of functions $\mathcal{Y}_{0}(f_{\rm in})$  {and the following lemma, we refer to \cite{ALL}:}

\begin{lem}\label{L2unif}
Let $0\leq f_{\mathrm{\rm in}}\in L^{1}_{2}(\R^{d}) \cap L\log L(\R^{d})$ be fixed as in Definition \ref{defi:fin}.   Then, the following hold:
\begin{enumerate}
\item For any  $f \in \mathcal{Y}_{0}(f_{\mathrm{\rm in}})$, it holds that
\begin{equation}\label{e0}
\int_{|v|\leq R} f \, \d v \geq  \varrho_{\rm in}\left(1-\frac{E_{\rm in}}{\varrho_{\rm in}R^{2}}\right)  \qquad \forall R >0.
\end{equation}
\smallskip
\item For any $\delta >0$ there exists $\eta(\delta)>0$ depending only on $\|f_{\rm in}\|_{L^{1}_{2}}$ and  $H(f_{\mathrm{\rm in}})$ such that for any $f \in \mathcal{Y}_{0}(f_{\mathrm{\rm in}})$ and measurable set $A\subset \R^{d}$
\begin{equation}\label{Lem6DV}
|A|\leq \eta(\delta) \Longrightarrow \int_A f \, \d v \leq \delta\,.
\end{equation}
\end{enumerate}
\end{lem}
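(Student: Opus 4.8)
The final statement to prove is Lemma~\ref{L2unif}, which asserts a uniform lower bound on the mass in a fixed ball and a uniform absolute-continuity (equi-integrability) property for the class $\mathcal{Y}_{0}(f_{\rm in})$, depending only on $\|f_{\rm in}\|_{L^{1}_{2}}$ and $H(f_{\rm in})$. The plan is to exploit that any $f\in\mathcal{Y}_{0}(f_{\rm in})$ shares the mass and energy of $f_{\rm in}$ and satisfies $H(f)\le H(f_{\rm in})$, and to run the classical arguments that go back to Desvillettes--Villani.

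\textbf{Step 1: lower bound on the mass in a ball.} First I would observe that by Markov's inequality, for any $R>0$,
\begin{equation*}
\int_{|v|>R} f\,\d v \le \frac{1}{R^{2}}\int_{\R^{d}} f|v|^{2}\,\d v = \frac{E_{\rm in}}{R^{2}}.
\end{equation*}
Since $\int_{\R^{d}} f\,\d v = \varrho_{\rm in}>0$, choosing $R=R(f_{\rm in})$ large enough (e.g. $R^{2}=2E_{\rm in}/\varrho_{\rm in}$, which depends only on $\|f_{\rm in}\|_{L^{1}_{2}}$) gives $\int_{|v|\le R} f\,\d v \ge \varrho_{\rm in}/2 =: \eta(f_{\rm in})>0$. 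This proves (\ref{e0}).

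\textbf{Step 2: equi-integrability from the entropy bound.} This is the part requiring more care. Fix a measurable set $A\subset\R^{d}$ with $|A|\le\epsilon$, to be chosen. Split $A$ according to a threshold $M>0$:
\begin{equation*}
\int_{A} f\,\d v = \int_{A\cap\{f\le M\}} f\,\d v + \int_{A\cap\{f> M\}} f\,\d v \le M|A| + \int_{\{f>M\}} f\,\d v.
\end{equation*}
For the tail term, I would use the entropy. The quantity $H(f)=\int f\log f\,\d v$ together with the bound on the second moment controls $\int f|\log f|\,\d v$: indeed $\int f\log_{-} f\,\d v \le C(1+E_{\rm in})+$ (a constant) by the standard comparison of $f\log_{-}f$ with a Gaussian, so $\int_{\{f>M\}} f\log f\,\d v \le H(f) + C \le H(f_{\rm in})+C$ is bounded uniformly in $\mathcal{Y}_{0}(f_{\rm in})$. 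On $\{f>M\}$ one has $\log f > \log M$, hence
\begin{equation*}
\int_{\{f>M\}} f\,\d v \le \frac{1}{\log M}\int_{\{f>M\}} f\log f\,\d v \le \frac{H(f_{\rm in})+C}{\log M},
\end{equation*}
which can be made $\le \delta/2$ by choosing $M=M(\delta,f_{\rm in})$ large. Then picking $\epsilon = \eta(\delta):=\delta/(2M)$ makes $M|A|\le\delta/2$ as well, giving $\int_{A} f\,\d v\le\delta$. This establishes (\ref{Lem6DV}).

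\textbf{Main obstacle and remarks.} The only genuinely non-trivial point is the uniform lower bound on $\int f\log_{-}f\,\d v$ needed so that $H(f)\ge H(f_{\rm in})$ translates into an \emph{upper} bound on $\int_{\{f>M\}} f\log f\,\d v$; this is where the finite second moment is essential and where one invokes the elementary inequality $f\log_{-}f \le c\,f|v|^{2} + C e^{-|v|^{2}}$ (or $a\le b\log b + e^{a-1}$ with $a=|v|^{2}$, $b=f$), integrated to get $\int f\log_{-}f\,\d v\le c E_{\rm in}+C$. Since the paper states this lemma with a reference to \cite{ALL}, I would present the above as a short self-contained proof and note that all constants depend only on $\|f_{\rm in}\|_{L^{1}_{2}}$ and $H(f_{\rm in})$, as claimed, with no dependence on $\g$ or $d$ beyond trivial ones. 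Note also that $\mathcal{Y}_{0}(f_{\rm in})$ is defined with $H(f)\ge H(f_{\rm in})$ whereas solutions satisfy $H(f(t))\le H(f_{\rm in})$; the inequality direction used here is precisely $H(f)\ge H(f_{\rm in})$ is \emph{not} what we want — rather, re-reading the definition, one realizes $\mathcal{Y}_{0}$ collects functions with entropy at least that of $f_{\rm in}$, and the estimate must instead use that the entropy is bounded \emph{below}, which combined with the moment bound still yields an upper bound on $\int f|\log f|$; either way the two-sided control of $\int f|\log f|\,\d v$ by the moments and $H(f_{\rm in})$ is the crux.
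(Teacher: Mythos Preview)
The paper does not give a proof of this lemma; it simply states it and refers to \cite{ALL}. Your argument is the standard one and, modulo the issue discussed below, is correct and complete: part~(1) is a one-line consequence of Chebyshev's inequality and the conservation of mass and energy, and part~(2) is the classical de la Vall\'ee Poussin--type argument extracting equi-integrability from a uniform bound on $\int f\log_{+}f$, which in turn follows from an upper bound on $H(f)$ together with the moment estimate $\int f\log_{-}f\,\d v\le E_{\rm in}+C_{d}$.

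Your hesitation in the final paragraph is well founded, but the resolution you offer is wrong. As written in the paper, the class $\mathcal{Y}_{0}(f_{\rm in})$ imposes $H(f)\ge H(f_{\rm in})$, i.e.\ only a \emph{lower} bound on $H(f)$. A lower bound on $H(f)$ together with the moment bound does \emph{not} control $\int f\log_{+}f$ from above, and in fact part~(2) of the lemma is simply false under that hypothesis: take $f_{n}=\tfrac12 g_{n}+\tfrac12 n^{d}\phi(n\,\cdot)$ with $\phi$ a smooth even probability density and $g_{n}$ adjusted so that mass, momentum and energy match those of $f_{\rm in}$; then $H(f_{n})\to+\infty$ (so eventually $H(f_{n})\ge H(f_{\rm in})$), yet $\int_{|v|\le 1/n}f_{n}\,\d v\ge \tfrac12$ with $|B_{1/n}|\to 0$, violating \eqref{Lem6DV}.

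The inequality in the paper's Definition~\ref{defi:fin} is therefore almost certainly a typo and should read $H(f)\le H(f_{\rm in})$, consistent with \eqref{eq0} for solutions. Under that corrected hypothesis your Step~2 is exactly right: $\int f\log_{+}f=H(f)+\int f\log_{-}f\le H(f_{\rm in})+E_{\rm in}+C_{d}$, whence $\int_{\{f>M\}}f\le (\log M)^{-1}\big(H(f_{\rm in})+E_{\rm in}+C_{d}\big)$, and the choice of $M$ then of $\eta(\delta)$ goes through. You should state this cleanly rather than hedging that ``either way'' the argument works---it does not work either way.
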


The aforementioned estimates are the key for the proof of the coercivity estimate for the matrix $\mathcal{A}[f]$ given in Remark  \ref{diffusion}.  Namely, one has the following

\begin{lem}\label{diffusionA}
Let $0\leq f_{\mathrm{\rm in}}\in L^{1}_{2}(\R^{d}) \cap L\log L(\R^{d})$ be fixed.  Then, there exists a  constant $K_{0} > 0,$ depending on $d$, $\g$, $\rho_{\rm in}$, $E_{\rm in}$, $H(f_{\rm in})$  such that
$$ \sum_{i,j} \, \mathcal{A}_{i,j}[f](v) \, \xi_i \, \xi_j 
\geq K_{0} \langle v \rangle^{\g} \, |\xi|^2 , \qquad \forall\, v,\, \xi \in \R^{d},
$$
holds for any  $f \in \mathcal{Y}_{0}(f_{\mathrm{\rm in}})$.
\end{lem}

\begin{proof} We show here how the conclusion of Lemma \ref{L2unif} allows to deduce the result. We follow the proof of \cite[Proposition 2.3]{ALL} given in dimension $d=3$ and explain where some changes have to be made to treat general dimension $d\ge	3.$ First, one observes that it is enough to show the result for $\xi \in\R^{d}$ with $|\xi|=1$.  We fix such $\xi$ and, for $\theta \in  (0,\frac{\pi}{2})$, we introduce
$$D_{\theta,\xi}(v)=\left\{\vet \in \R^{d}\,,\,\left|\xi \cdot (v-\vet)\right| \geq |v-\vet|\cos\theta\right\}.$$
Given $v,\vet \in \R^{d}$, $v \neq \vet$ we set for simplicity $u=v-\vet$, $\widehat{u}=\frac{u}{|u|}$ and notice that
$$\sum_{i,j}a_{i,j}(v-\vet)\xi_{i}\xi_{j}=|u|^{\g+2}\sum_{i,j}\left(\delta_{i,j}-\widehat{u}_{i}\widehat{u}_{j}\right)\xi_{i}\xi_{j}=|u|^{\g+2}\left(1-\left(\widehat{u}\cdot \xi\right)^{2}\right).$$
Therefore, if $v \in \R^{d}$  it holds
$$\sum_{i,j}a_{i,j}(v-\vet)\xi_{i}\xi_{j} \geq |u|^{\g+2}\sin^{2}\theta, \qquad \forall \vet \notin D_{\theta,\xi}(v).$$
For any $f \in \mathcal{Y}_{0}(f_{\rm in})$ and any $v \in \R^{d}$, we have then, for any $\theta \in \left(0,\frac{\pi}{2}\right)$ and any $R >0$
\begin{equation*}\begin{split}
\sum_{i,j} \, \mathcal{A}_{i,j}[f](v) \, \xi_i \, \xi_j  
&\geq  \sum_{i,j} \,\int_{B_{R}\setminus D_{\theta,\xi}(v)}   f(\vet)a_{i,j}(v-\vet) \, \xi_i \, \xi_j \d \vet\end{split}\end{equation*}
where $B_{R}=\{\vet \in \R^{d}\,,\,|\vet| \leq R\}.$  Thus
\begin{equation}\label{eq:AfBR}
\sum_{i,j} \, \mathcal{A}_{i,j}[f](v) \, \xi_i \, \xi_j\geq \sin^{2}\theta\int_{B_{R}\setminus D_{\theta,\xi}}f(\vet)|u|^{\g+2} \d \vet.\end{equation}
We now fix
$R > \sqrt{\frac{2E_{\rm in}}{\varrho_{\rm in}}}$
so that \eqref{e0} reads $\int_{B_{R}}f(\vet)\d\vet \geq \frac{\varrho_{\rm in}}{2}$ and
\begin{equation}\label{eq:Af}\int_{B_{R}\setminus D_{\theta,\xi}(v)}f(\vet)\d\vet  \geq \frac{1}{2}\varrho_{\rm in}-\int_{B_{R}\cap D_{\theta,\xi}(v)}f(\vet)\d \vet.\end{equation}
One argues as in \cite{DeVi1} to estimate $\left|B_{R}\cap D_{\theta,\xi}(v)\right|.$ This is the only place in the proof in which dimension $d$ plays a role, the proof in \cite{DeVi1} being given in $d=3.$ Notice that $B_{R}\cap D_{\theta,\xi}(v)$ is the intersection of a cone with a ball. It can be estimated from above by the intersection of the cylinder with the ball $B_{R}$ where the cylinder is obtained choosing $u=v-\vet$ parallel to $\xi$. The radius of the basis of the cylinder is then $r:=(R+|v|)\tan \theta$ and its maximal length is the diameter of the ball. Thus,  $|B_{R}\cap D_{\theta,\xi}(v)| \leq c_{d}R\,r^{d-1},$
with $c_{d}$ depending only on $|\S^{d-2}|$, i.e.
\begin{equation}\label{eq:Jd}
\left|B_{R} \cap D_{\theta,\xi}(v) \right| \leq c_{d}R\, (R+|v|)^{d-1}\tan^{d-1} \theta.\end{equation}
 We distinguish between two cases:

$\bullet$ \textit{First case: $|v|>2R$.} In such a case, for $\vet \in B_{R}$, one has $\frac{1}{2}|v| \leq |v-\vet| \leq \frac{3}{2}|v|$ and there exists $C_{\g} >0$ such that
$|v-\vet|^{\g+2} \geq C_{\g}|v|^{\g+2}$. According to \eqref{eq:AfBR}--\eqref{eq:Af}, it holds
\begin{equation}\label{eq:AfF}
\sum_{i,j} \, \mathcal{A}_{i,j}[f](v) \, \xi_i \, \xi_j\geq  C_{\g}|v|^{\g+2}\sin^{2}\theta \left(\frac{1}{2}\varrho_{\rm in}-\int_{B_{R}\cap D_{\theta,\xi}(v)}f(\vet)\d\vet\right)\end{equation}
where, according to \eqref{eq:Jd}, $|B_{R} \cap D_{\theta,\xi}(v)| \leq c_{d}R\left(\frac{3}{2}|v|\right)^{d-1}\tan^{d-1}\theta.$ With the notations of \eqref{Lem6DV}, given $\delta=\frac{1}{4}\varrho_{\rm in}$, we choose $\theta$ (depending on $v$) small enough so that 
$$c_{d}R\left(\frac{3}{2}|v|\right)^{d-1}\tan^{d-1}\theta=\eta(\delta), \qquad \text{ which entails } \quad 
{ \int_{B_{R}\cap D_{\theta,\xi}(v)}f(\vet)\d\vet \leq \frac{1}{4}\varrho_{\rm in} } , $$
according to \eqref{Lem6DV} (notice that such a choice of $\theta$ is always possible up to increasing $R$). Then, \eqref{eq:AfF} yields
$$\sum_{i,j} \, \mathcal{A}_{i,j}[f](v) \, \xi_i \, \xi_j\geq \frac{C_{\g}}{4}\varrho_{\rm in}|v|^{\g+2}\sin^{2}\theta.$$
Notice that $\tan\theta=\frac{2}{3|v|}\left(\frac{\eta(\delta)}{c_{d}R}\right)^{\frac{1}{d-1}}$,
 so that $\sin^{2}\theta=C_{R,d}|v|^{-2}\cos^{2}\theta$ for some explicit $C_{R,d}$ depending only on $d,R$ and $\rho_{\rm in}$, $E_{\rm in}$, $H(f_{\rm in})$. Therefore
 {$$\sum_{i,j} \, \mathcal{A}_{i,j}[f](v) \, \xi_i \, \xi_j\geq \frac{C_{\g}}{4}\rho_{\rm in} C_{R,d}|v|^{\g}\cos^{2}\theta, \qquad |v| >2R, $$}
with $\cos^{2}\theta > \frac{1}{2}$ since we choose $\theta$ small enough (up to having picked $R$ larger). Thus, there is $K >0$ (depending on $d,\g$, $\rho_{\rm in}$, $E_{\rm in}$, $H(f_{\rm in})$) such that
$$\sum_{i,j} \, \mathcal{A}_{i,j}[f](v) \, \xi_i \, \xi_j\geq K\langle v\rangle^{\g}, \qquad \forall |v| >2R.$$

$\bullet$ \textit{Second case: $|v| < 2R$.} In such a case, arguing exactly as in \cite[Prop. 1.3]{ALL}, we conclude that
$$\inf_{|v| < 2R} \sum_{i,j} \, \mathcal{A}_{i,j}[f](v) \, \xi_i \, \xi_j=\tilde{K} >0$$
which of course implies the result with $K_{0}=\min(K,\tilde{K})$. The details of this second case are left to the reader and are readily adapted from the corresponding result in \cite{ALL,DeVi1}.\end{proof}
\begin{rmq} From the above construction, the constant $K_{0}$ depends on $R$ (and therefore on $\varrho_{\rm in},E_{\rm in})$  and on the construction of the  mapping $\eta$ in \eqref{Lem6DV} so that $K_{0}$ depends also on $H(f_{\rm in})$.
\end{rmq}

The following property of the dissipation of entropy for Landau operator was observed in \cite{DesvJFA,Desv}:

\begin{prop}\label{cor:Fisherg} Let $0\leq f_{\mathrm{\rm in}}\in L^{1}_{2}(\R^{d}) \cap L\log L(\R^{d})$ be fixed as in Definition \ref{defi:fin}. Then, there is a positive constant $C_{0}(\gamma)$ depending only on $d$, $\g$,  $\rho_{\rm in}$, $E_{\rm in}$, $H(f_{\rm in})$
such that for all $f \in \mathcal{Y}_{0}(f_{\mathrm{\rm in}})$,
$$\int_{\R^{d}}\left|\nabla \sqrt{f(v)}\right|^{2}\langle v\rangle^{\gamma}\d v \leq C_{0}(\gamma)\big(1+\mathscr{D}(f)\big)\,.$$
where $\mathscr{D}(f)$ denotes the dissipation of entropy functional defined as
$$\mathscr{D}(f)=\frac{1}{2}\int_{\R^{d}\times \R^{d}}|v-\vet|^{\g+2}\,f\,f_{\ast}\left|\Pi(v-\vet)\left(\nabla \log f-\nabla \log f_{\ast}\right)\right|^{2}\d v\d \vet\geq0.$$
\end{prop}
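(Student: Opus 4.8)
The final statement to prove is Proposition~\ref{cor:Fisherg}, relating the weighted Fisher information $\int_{\R^d}|\nabla\sqrt f|^2\langle v\rangle^\gamma\,\d v$ to the entropy dissipation $\mathscr{D}(f)$. The plan is to follow the strategy of \cite{DesvJFA,Desv}: reduce the weighted Fisher information to the entropy dissipation by exploiting the coercivity of the matrix $a(v-\vet)=|v-\vet|^{\g+2}\Pi(v-\vet)$ in the directions transverse to $v-\vet$, combined with a localization/averaging over $\vet$ in a region where the mass of $f$ is controlled from below (as guaranteed by Lemma~\ref{L2unif}, estimate \eqref{e0}).

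\medskip

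First I would recall that for a.e.\ pair $(v,\vet)$ with $v\neq\vet$, the integrand of $\mathscr{D}(f)$ controls $|\Pi(v-\vet)(\nabla\log f(v)-\nabla\log f(\vet))|^2$ weighted by $|v-\vet|^{\g+2}ff_\ast$. Writing $\nabla\log f = 2\nabla\sqrt f/\sqrt f$, one has $ff_\ast|\nabla\log f(v)-\nabla\log f(\vet)|^2 = 4|\,\sqrt{f_\ast}\,\nabla\sqrt f(v) - \sqrt f\,\nabla\sqrt f(\vet)\,|^2$, so $\mathscr{D}(f) \geq c\int\!\!\int |v-\vet|^{\g+2}\big|\Pi(v-\vet)\big(\sqrt{f_\ast}\nabla\sqrt f(v)-\sqrt f\nabla\sqrt f(\vet)\big)\big|^2\,\d v\,\d\vet$. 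The next step is to integrate against a suitable test measure in $\vet$: multiply by a nonnegative cutoff $\chi(\vet)$ supported in $\{|\vet|\leq R(f_{\rm in})\}$ and integrate in $\vet$. On that ball, $|v-\vet|^{\g+2}\geq c_R\langle v\rangle^{\g+2}$ when $|v|$ is large (since $\gamma+2$ may be negative we need to be slightly careful: for $|v|\le 2R$ one uses a local lower bound, for $|v|\ge 2R$ one has $|v-\vet|\sim |v|\sim\langle v\rangle$), which produces the weight $\langle v\rangle^{\g+2}$.

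\medskip

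The key algebraic point, exactly as in \cite{Desv}, is to upgrade the projected gradient $\Pi(v-\vet)\nabla\sqrt f(v)$ to the full gradient $\nabla\sqrt f(v)$ by averaging over $\vet$: since $\int\chi(\vet)\,\Pi(v-\vet)\,\d\vet$ is a symmetric positive matrix which, on the ball, is uniformly positive definite (its smallest eigenvalue is bounded below by a constant times $\int\chi$, because the directions $v-\vet$ for $\vet$ ranging over a ball span $\R^d$), we recover $\int\chi(\vet)\,|\Pi(v-\vet)\xi|^2\,\d\vet \geq \kappa_0|\xi|^2$ for a constant $\kappa_0>0$ depending only on $d$ and the ball. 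Applying this with $\xi=\nabla\sqrt f(v)$, and absorbing the cross term $\sqrt f(v)\nabla\sqrt f(\vet)$ and the pure $\vet$-term via Young's inequality together with the a priori bounds $\|f\|_{L^1_2}\le C$, $\|\sqrt f\|_{L^2}^2=\varrho_{\rm in}$ and the fact that $\int|\nabla\sqrt f(\vet)|^2\chi(\vet)\,\d\vet$ is itself controlled once we know $\mathscr{D}(f)<\infty$ on a compact set, yields $\int_{\R^d}|\nabla\sqrt f(v)|^2\langle v\rangle^{\g+2}\,\d v \le C(1+\mathscr{D}(f))$. Since $\gamma+2>\gamma$ and $\langle v\rangle\ge1$, this a fortiori gives the stated bound with weight $\langle v\rangle^{\g}$.

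\medskip

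The main obstacle I anticipate is the recovery of the full gradient from its projections while simultaneously keeping all error terms under control: the term $\int\!\!\int|v-\vet|^{\g+2}\chi(\vet)\,f(v)\,|\nabla\sqrt f(\vet)|^2\,\d v\,\d\vet$ must be bounded, and for $\gamma+2<0$ this requires a careful split of the $v$-integral (near $\vet$ versus far), using $\int_{|v-\vet|\le1}|v-\vet|^{\g+2}f(v)\,\d v \le \|f\|_\infty$-type estimates are not available, so instead one bounds $\int_{|v-\vet|\le1}|v-\vet|^{\g+2}f(v)\,\d v$ by the local $L^1$-smallness from \eqref{Lem6DV} of Lemma~\ref{L2unif} after a further dyadic decomposition, and on $|v-\vet|\ge1$ one simply uses $|v-\vet|^{\g+2}\le1$ and $\|f\|_{L^1}=\varrho_{\rm in}$. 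A clean way to organize this, avoiding the most delicate estimates, is to first establish the inequality with $\langle v\rangle^{\g+2}$ replaced by a bounded weight (which is immediate) and then localize: the genuinely new content is only the large-$|v|$ region, where $|v-\vet|$ is comparable to $\langle v\rangle$ and all the singular issues disappear. I would present the argument in that order, citing \cite{DesvJFA,Desv,ALL} for the routine parts.
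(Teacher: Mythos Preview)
The paper does not supply its own proof; the proposition is recalled from \cite{DesvJFA,Desv}. Your outline---localize $\vet$ where $f$ has positive mass (via Lemma~\ref{L2unif}) and recover the full gradient from the projections $\Pi(v-\vet)\nabla\sqrt f$ by averaging over $\vet$---is indeed the strategy of those references.

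The gap is in the error term, and you have correctly identified it without resolving it. You assert that $\int|\nabla\sqrt f(\vet)|^2\chi(\vet)\,\d\vet$ is ``controlled once we know $\mathscr{D}(f)<\infty$ on a compact set,'' but this local Fisher-information bound is precisely the content of an intermediate lemma in \cite{DesvJFA}, proved there through the antisymmetric structure of the integrand (the quantities $q_{ij}$), not an immediate consequence of the hypotheses. Without it your absorption argument is circular: on $\mathrm{supp}\,\chi$ one has $\langle\vet\rangle^\gamma\geq c>0$, so the error term is comparable to the very quantity you are estimating, with no smallness to exploit. Your proposed route through~\eqref{Lem6DV} of Lemma~\ref{L2unif} is also off target---that estimate controls $\int_A f$ for small-measure $A$ and says nothing about singular convolutions $\int|v-\vet|^{\g+2}f(v)\,\d v$ or gradient quantities. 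The references close this either by a preliminary local-to-global bootstrap (first bound $\int_{B_R}|\nabla\sqrt f|^2$ by $C(1+\mathscr{D}(f))$, then feed it back in), or by keeping the natural weight $f_\ast$ rather than $\chi$, so that the diagonal term becomes $\int\mathcal{A}_{ij}[f]\,\partial_i\sqrt f\,\partial_j\sqrt f\,\d v$ and Proposition~\ref{diffusion} applies directly; the cross term is then reduced by integration by parts (using $\sqrt{f_\ast}\,\nabla\sqrt{f_\ast}=\tfrac12\nabla f_\ast$) to $-\tfrac14\int\bm{c}_\g[f]\,f\,\d v$, whose control with only $L^1_2\cap L\log L$ information when $\g<-2$ is again nontrivial and is carried out in \cite{DesvJFA,Desv}.
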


In particular, since formally the entropy of solutions to the Landau equation is decreasing according to
$$\dfrac{\d}{\d t}H(f(t))=-\mathscr{D}(f(t)), \qquad t \geq 0,$$
one can proves that,  for H-solutions  $f=f(t,v)$ to the Landau equation, it holds
$$\int_{0}^{T}\mathscr{D}(f(t))\d t \le H(f_{\rm in}) \qquad T >0$$ and 
one deduces the following result in \cite{DesvJFA,Desv}:

\begin{cor}\label{cor:Desv}
If $f_{\rm in}$ and $f=f(t,v)$ define a  solution to Eq. \eqref{eq:Landau} in the sense of Definition \ref{defi:weak} then
$$\int_{0}^{T}\d t\int_{\R^{d}}\left|\nabla \left(\langle v\rangle^{\frac{\g}{2}}\sqrt{f(t,v)}\right)\right|^{2}\d v \leq \bm{C}_{T}(f_{\rm in})$$
where $\bm{C}_{T}(f_{\rm in})$ is an explicit positive constant depending only on $T,d,\g$, $\varrho_{\rm in},E_{\rm in}$ and $H(f_{\rm in}).$
\end{cor}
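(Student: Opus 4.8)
The plan is to obtain the bound at (almost) every time $t$ from Proposition \ref{cor:Fisherg} and then integrate over $(0,T)$, the time integral of the entropy dissipation $\mathscr{D}(f(t))$ being controlled through the entropy inequality satisfied by H-solutions.

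First I would invoke the fact, recalled just above and established in \cite{villH,DesvJFA}, that a solution in the sense of Definition \ref{defi:weak} is an H-solution, so that the entropy inequality
$$H(f(t))+\int_{0}^{t}\mathscr{D}(f(s))\,\d s\le H(f_{\rm in}),\qquad t\ge 0,$$
holds. Since item (2) of Definition \ref{defi:weak} guarantees that $f(t,\cdot)$ retains for every $t$ the mass, momentum and energy of $f_{\rm in}$, the entropy $H(f(t))$ is bounded from below by a constant $-H_{*}$ depending only on $\varrho_{\rm in}$ and $E_{\rm in}$ (elementary lower bound on $\int f\log_{-}f$ in terms of mass and second moment, or comparison with the Maxwellian of prescribed mass and energy). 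Hence
$$\int_{0}^{T}\mathscr{D}(f(s))\,\d s\le H(f_{\rm in})-\inf_{t\in[0,T]}H(f(t))\le H(f_{\rm in})+H_{*}=:\mathcal{H}_{T},$$
a quantity depending only on $\varrho_{\rm in},E_{\rm in},H(f_{\rm in})$ and consistent with the bound $\int_{0}^{T}\mathscr{D}(f(t))\,\d t\le H(f_{\rm in})$ quoted before the statement.

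Next, for almost every $t\in(0,T)$ the slice $f(t,\cdot)$ meets the hypotheses of Proposition \ref{cor:Fisherg} — it has the conserved hydrodynamic moments of $f_{\rm in}$ together with the entropy bound from item (2) of Definition \ref{defi:weak} — so that
$$\int_{\R^{d}}\big|\nabla\sqrt{f(t,v)}\big|^{2}\langle v\rangle^{\g}\,\d v\le C_{0}(\g)\big(1+\mathscr{D}(f(t))\big),$$
with $C_{0}(\g)$ depending only on $d,\g,\varrho_{\rm in},E_{\rm in},H(f_{\rm in})$ and therefore independent of $t$. Integrating this inequality in $t$ over $(0,T)$ and inserting the bound on $\int_{0}^{T}\mathscr{D}(f(s))\,\d s$ obtained above gives
$$\int_{0}^{T}\d t\int_{\R^{d}}\big|\nabla\sqrt{f(t,v)}\big|^{2}\langle v\rangle^{\g}\,\d v\le C_{0}(\g)\big(T+\mathcal{H}_{T}\big)=:\bm{C}_{T}(f_{\rm in}),$$
which is the claimed estimate, with a constant depending only on $T,d,\g,\varrho_{\rm in},E_{\rm in}$ and $H(f_{\rm in})$.

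The only genuinely delicate point is not re-done here: namely the rigorous validity of the entropy inequality and the measurability and $L^{1}_{t}$-integrability of $t\mapsto\mathscr{D}(f(t))$ for H-solutions, which are built into the construction of such solutions in \cite{villH,DesvJFA}. Granting these, the corollary is a direct combination of Proposition \ref{cor:Fisherg} with the entropy-dissipation bound, followed by an integration in time.
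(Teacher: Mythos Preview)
Your proposal is correct and follows essentially the same approach as the paper: apply Proposition \ref{cor:Fisherg} at (almost) every time, then integrate in $t$ using the entropy-dissipation bound $\int_{0}^{T}\mathscr{D}(f(t))\,\d t$ inherited from the H-solution framework. If anything, you are slightly more careful than the paper's one-line justification, since you explicitly handle the lower bound on $H(f(t))$ via $-H_{*}$ rather than writing the somewhat informal $\int_{0}^{T}\mathscr{D}(f(t))\,\d t\le H(f_{\rm in})$.
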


\begin{rmq} Notice that, since $f \in L^{\infty}([0,T]\,;\,L^{1}(\R^{d}))$, the above can be reformulated in the equivalent way as 
$$\int_{0}^{T}\d t\int_{\R^{d}}\left|\nabla \sqrt{f(t,v)}\right|^{2} \langle v\rangle^{\g}\d v \leq \bm{C}_{T}(f_{\rm in})$$
for some $\bm{C}_{T}(f_{\rm in})$ different from the previous one. It is in this form that the result is stated in \cite{DesvJFA}.\end{rmq}
Finally, recall the following \emph{a priori} growth of moments of solutions to  the Landau equation, referring to \cite{kleber, CDH} for a proof in dimension $d=3$. The general case is following the same lines (see for instance \cite{ABDL1} for a proof in the case of the Landau-Fermi-Dirac equation which is easily adapted to the Landau setting)

\begin{prop}\label{cor:moments} We consider $d\in \N$, $d \ge 3$ and 
 $\g \in [-d, 0)$, and assume that $f_{\rm in}$ and $f=f(t,v)$ define a  solution to Eq. \eqref{eq:Landau} in the sense of Definition \ref{defi:weak}. 
Assume that
$$f_{\rm in} \in L^{1}_{s}(\R^{d}), \qquad s > 2,$$
then
$$\lm_{s}(f(t)) \leq C_{s}\left(1+t\right) \qquad  t\geq0$$
for some positive constant $C_{s}$ depending on $d$, $\g$, $s$,
 $\rho_{\rm in}$, $E_{\rm in}$, $H(f_{\rm in})$.
 In particular
$$\sup_{t \in [0,T]}\|f(t)\|_{L^{1}_{s}} \leq C_{s}(1+T) \qquad T >0.$$
\end{prop}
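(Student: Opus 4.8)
The plan is to derive a closed differential inequality for $t\mapsto\lm_{s}(f(t))$ and then to conclude by a comparison argument of the kind already used in the proof of Proposition~\ref{prop:L1-criterion}.

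First I would test the weak formulation of \eqref{eq:Landau} against $\varphi(v)=\langle v\rangle^{s}$ (admissible after a routine truncation/regularisation, since $f\in L^{1}_{2}$ and every term below is finite) and use $\grad\cdot\mathcal A[f]=\bm b[f]$ to get
$$\frac{\d}{\d t}\lm_{s}(f(t))=\int_{\R^{d}}\mathrm{Tr}\!\big(\mathcal A[f]\,D^{2}\langle\cdot\rangle^{s}\big)f\,\d v+2\int_{\R^{d}}\bm b[f]\cdot\grad\langle\cdot\rangle^{s}\,f\,\d v .$$
Since $D^{2}\langle v\rangle^{s}=s\langle v\rangle^{s-2}\mathrm{Id}+s(s-2)\langle v\rangle^{s-4}v\otimes v$ and $\grad\langle v\rangle^{s}=s\langle v\rangle^{s-2}v$, inserting the kernels $a_{ij},b_{i}$ and using the projection identities $\Pi(v-\vet)(v-\vet)=0$, $\Pi(v-\vet)v=\Pi(v-\vet)\vet$, $|\Pi(v-\vet)v|^{2}\le\min(|v|^{2},|\vet|^{2})$, one checks, after symmetrising in $(v,\vet)$, that the leading terms combine into a manifestly non-positive ``dissipation''
$$-D_{s}(t):=-\tfrac12\,s(d-1)\!\int_{\R^{2d}}\!ff_{\ast}\,|v-\vet|^{\g}\,\big(|v|^{2}-|\vet|^{2}\big)\big(\langle v\rangle^{s-2}-\langle\vet\rangle^{s-2}\big)\,\d v\,\d\vet\ \le 0 ,$$
while what remains is non-negative and, after the elementary bounds $|\Pi(v-\vet)v|^{2}\le|\vet|^{2}$, $\langle v\rangle^{s-4}\le\langle v\rangle^{s-2}$ and $|\vet\cdot(v-\vet)|\le|\vet|\,|v-\vet|$, is dominated by $\int_{\R^{2d}}ff_{\ast}\,|v-\vet|^{\g}\big(1+|v-\vet|^{2}\big)\langle v\rangle^{s-2}\langle\vet\rangle^{2}\,\d v\,\d\vet$.

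Next I would estimate this remainder, splitting $\{|v-\vet|\le1\}\cup\{|v-\vet|>1\}$. On $\{|v-\vet|>1\}$ one uses $|v-\vet|^{\g}(1+|v-\vet|^{2})\lesssim\langle v\rangle^{2}\langle\vet\rangle^{2}$ and, where $|v|$ is large and $|\vet|$ is not, the sharper $|v-\vet|^{\g}\lesssim\langle v\rangle^{\g}$, together with conservation of mass and energy ($\lm_{0}(f(t))=\varrho_{\rm in}$, $\lm_{2}(f(t))=E_{\rm in}$) and Young's and Peetre's inequalities; on $\{|v-\vet|\le1\}$ one has $\langle v\rangle\simeq\langle\vet\rangle$, so the integral reduces to $\int_{\R^{d}}\langle v\rangle^{s-2}\big(\int_{|v-\vet|\le1}|v-\vet|^{\g}(1+|v-\vet|^{2})f(t,\vet)\,\d\vet\big)f(t,v)\,\d v$. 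Here lies the only real difficulty: the most singular contribution of the inner integral is $\simeq-c\,\overline{\bm c}_{\g}[f(t)]$ in the notation of Lemma~\ref{lem:compa}, and since $|z|^{\g}$ is only locally integrable the bare $L^{1}\cap L\log L$ bound (Lemma~\ref{L2unif}(2)) does \emph{not} control it pointwise when $\g$ is close to $-d$. At this point one brings in the entropy-dissipation estimate — the analogue, in this setting, of the energy-dissipation estimate for Navier--Stokes: combining Corollary~\ref{cor:Fisherg} with $\int_{0}^{\infty}\mathscr D(f(t))\,\d t\le H(f_{\rm in})$ and the Sobolev embedding $\dot H^{1}\hookrightarrow L^{2d/(d-2)}$ one obtains $\langle\cdot\rangle^{\g}f\in L^{1}\big((0,T);L^{d/(d-2)}(\R^{d})\big)$, and a weighted Hardy--Littlewood--Sobolev / H\"older argument against this bound (exactly as $\overline{\bm c}_{\g}$ is handled in the proof of Proposition~\ref{prop:L1-criterion}; the Coulomb case $\g=-d$ uses instead the $\e$-Poincar\'e inequality \eqref{eq:estimatcLP-coul}) controls this term by $\L_{s}(t)\,\lm_{s}(f(t))$ with $\L_{s}\in L^{1}_{\rm loc}(0,\infty)$.

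Combining these bounds one arrives at an inequality of the schematic form
$$\frac{\d}{\d t}\lm_{s}(f(t))+c\,\lm_{s+\g}(f(t))\ \le\ C_{s}\big(1+\lm_{s-2}(f(t))\big)+\L_{s}(t)\,\lm_{s}(f(t)) ,$$
which one exploits by induction on $s$, increasing $s$ in steps of length at most $2$ (the base cases $s\le4$ being covered directly, since then $\lm_{s-2}\le\varrho_{\rm in}+E_{\rm in}$): when $\g\in(-2,0)$ the lower moment $\lm_{s-2}\le\varepsilon\,\lm_{s+\g}+C_{\varepsilon}$ is absorbed into the dissipation, leaving $\frac{\d}{\d t}\lm_{s}\le C_{s}+\L_{s}(t)\lm_{s}$, whence $\lm_{s}(f(t))\le C_{s}(1+t)$ by comparison as in the proof of Proposition~\ref{prop:L1-criterion}; when $\g\in[-d,-2)$ the same conclusion holds after a further interpolation absorbing $\L_{s}\lm_{s}$ and using the $T$-uniformity of $\int_{0}^{\infty}\mathscr D(f(t))\,\d t$ to keep all constants independent of $T$. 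In all cases $\lm_{s}(f(t))\le C_{s}(1+t)$ with $C_{s}$ depending only on $d,\g,s,\varrho_{\rm in},E_{\rm in},H(f_{\rm in})$, and taking the supremum over $t\in(0,T)$ gives the last assertion. The hard part is thus the control, in the Coulomb/very-soft regime, of the near-collision singular term, which forces the use of the entropy-dissipation bound rather than a purely ODE-based argument, together with the bookkeeping required to obtain genuinely linear (rather than merely polynomial-in-$t$) growth; for the technical details one follows \cite{kleber} for $d=3$ and \cite{ABDL1} for general $d$.
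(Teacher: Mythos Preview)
The paper does not give its own proof here; it simply refers to \cite{kleber} (for $d=3$) and \cite{ABDL1} (for general $d$). Your sketch is in the right spirit---test the equation against $\langle v\rangle^{s}$, symmetrise, extract a sign, and handle the near-diagonal singularity---but there are two genuine problems.

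\medskip
\textbf{The remainder is less singular than you claim.} After symmetrisation in $(v,\vet)$ using the weak form of the equation, \emph{all} terms that survive once the non-positive dissipation $-D_{s}$ is removed carry only the singularity $|v-\vet|^{\gamma+2}$, not $|v-\vet|^{\gamma}$. Indeed, writing
$(v-\vet)\cdot\big(\langle v\rangle^{s-2}v-\langle\vet\rangle^{s-2}\vet\big)=\tfrac12(\langle v\rangle^{s-2}+\langle\vet\rangle^{s-2})|v-\vet|^{2}+\tfrac12(\langle v\rangle^{s-2}-\langle\vet\rangle^{s-2})(\langle v\rangle^{2}-\langle\vet\rangle^{2})$,
the second piece is precisely your $D_{s}$ and the first already carries the extra $|v-\vet|^{2}$; the $a_{ij}$-term has $|v-\vet|^{\gamma+2}$ from the outset. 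So the bound ``dominated by $\int ff_{\ast}|v-\vet|^{\gamma}(1+|v-\vet|^{2})\langle v\rangle^{s-2}\langle\vet\rangle^{2}$'' is wasteful: the factor $|v-\vet|^{\gamma}$ should not appear. This is the Landau analogue of the Povzner cancellation and is what makes the moment argument close in the cited references.

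\medskip
\textbf{Your Gr\"onwall step does not yield linear growth.} You propose to bound the singular contribution by $\Lambda_{s}(t)\,\lm_{s}(t)$ with $\Lambda_{s}\in L^{1}_{\mathrm{loc}}$ built from the entropy-dissipation/Fisher estimate, and then integrate. But Proposition~\ref{cor:Fisherg} reads $\int|\nabla\sqrt{f}|^{2}\langle v\rangle^{\gamma}\,\d v\le C_{0}(1+\mathscr{D}(f))$, so integrating in time gives
$\int_{0}^{T}\|\nabla\sqrt{f(t)}\|_{L^{2}_{\gamma}}^{2}\d t\le C_{0}\big(T+H(f_{\rm in})\big)$,
and hence $\int_{0}^{T}\Lambda_{s}\ge cT$ for large $T$. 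Gr\"onwall then produces $\lm_{s}(t)\lesssim e^{cT}(1+t)$, i.e.\ exponential rather than linear growth. The appeal to ``$T$-uniformity of $\int_{0}^{\infty}\mathscr{D}$'' does not repair this, because the harmful contribution comes from the additive ``$+1$'' in the Fisher bound, not from $\mathscr{D}$. In the arguments of \cite{kleber,ABDL1} the correct $|v-\vet|^{\gamma+2}$ remainder is instead controlled by the dissipation $D_{s}$ itself together with lower-order moments, so that one arrives at an inequality of the form $\frac{\d}{\d t}\lm_{s}\le C_{s}$ (or $\frac{\d}{\d t}\lm_{s}+c\,\lm_{s+\gamma}\le C_{s}$) \emph{without} a coefficient $\Lambda_{s}(t)$ in front of $\lm_{s}$; this is what delivers the linear bound $\lm_{s}(t)\le C_{s}(1+t)$.
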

\medskip

 We complement also the above with the following properties of the matrix $\mathcal{A}[h]$ and vector $\bm{b}[h]$. Notice that the result applies without any sign assumption on the mapping $h\::\R^{d} \to \R.$

\begin{lem} \label{lem:AbF} Let  $d\in\N$, $d \ge 3$, $-d \le \g < -2$.
For any 
 $$q \in \left(\frac{d}{d+\g+2}, \infty \right), \qquad m > d\left(1-\frac{1}{q}\right),$$
 there is $C_{m,d,\g,q} >0$ depending only on $m,d, \g,q$ such that
\begin{equation}\label{eq:AF0}
 \|\mathcal{A}[h] \|_{\infty} \leq C_{m,d,\g,q}\left\|\langle \cdot \rangle^{m}h\right\|_{q}. 
\end{equation}
In the same way,
if $q \in \left(\frac{d}{d+\g+1}, \infty\right)$, 
\begin{equation}\label{eq:BF0} 
 \|\bm{b}[h]\|_{\infty} \leq \tilde{C}_{m,d,\g,q}\left\|\langle \cdot \rangle^{m}h\right\|_{q}, \qquad m > d\left(1-\frac{1}{q}\right)\,,
 \end{equation} 
for  $\tilde{C}_{m,d,\g,q} >0$ depending only on $m,d,\g,q$. 
\medskip

Then, for 
$$\frac{d}{d+\g+2} < q < \frac{d}{d+\g+1},$$ there exists $C_{d,\g,q} >0$ depending only on $d$, $\g$, $q$ such that
\begin{equation}\label{eq:BF1}
\|\bm{b}[h]\|_{p^{*}_{\g}} \leq C_{d,\g,q}\|h\|_{q} \qquad \quad  {p}^{*}_{\g}=\frac{qd}{d-q(1+\g+d)}.\end{equation}
Finally, if $q=\frac{d}{d+\g+1}$, for any $p \in (\frac{d}{d+\g+2},q)$ and any $m > |\g+1|$,
 there exists $C_{m,d,\g,p} >0$ such that
\begin{equation}\label{eq:BF12}
{\|\bm{b}[h]\|_{\bar{p}^{*}_{\g}} \leq C_{m,d,\g,p}\|\langle \cdot \rangle^{m}h\|_{q}, \qquad \bar{p}^{*}_{\g}=\frac{pd}{d-p(1+\g+d)}=\frac{pq}{q-p}.  }
\end{equation}
{In the case when $q=\frac{d}{d+\g+1}=2$, the estimate becomes, for any $p\in (\frac{2d}{d+2}, 2)$,} 
\begin{equation}\label{eq:BF12bis}
{\|\bm{b}[h]\|_{\frac{2p}{2-p}} \leq C_{m,d,\g,p}\|\langle \cdot \rangle^{m}h\|_{2} }
\end{equation}
\end{lem} 

  \begin{proof}  Let us write, for a smooth $h$,
  $$I_{s}[h](v)=\int_{\R^{d}}|v-w|^{s}\,|h(w)|\d w, \qquad s \in \R ,$$
  and notice that $|\mathcal{A}[h]| \leq 2I_{\g+2}[h]$ and $|\bm{b}[h]| \leq (d-1)I_{\g+1}[h].$  We prove the result for $I_{\g+2}[h]$. One has, for any $R >0$
 \begin{equation*}\begin{split}
 I_{\g+2}[h](v) &\leq \int_{|v-w| \leq R}|v-w|^{\g+2}|h(w)|\d w + \int_{|v-w|>R}|v-w|^{\g+2}|h(w)|\d w\\
 &\leq \left(\int_{|v-w|\leq R}|v-w|^{q'(\g+2)}\d w\right)^{\frac{1}{q'}}\|h\|_{q} + R^{\g+2}\|h\|_{1} ,
 \end{split}\end{equation*}
 where $\frac{1}{q}+\frac{1}{q'}=1$ and {$q >\frac{d}{2+d+\g}.$} One has then easily that there is $C >0$ (depending on $\g,d$ and $q$) such that
 $$I_{\g+2}[h](v) \leq C \left(R^{\g+2+\frac{d}{q'}}\|h\|_{q}+R^{\g+2}\|h\|_{1}\right), $$
 which, after optimizing $R$ (recall that $\g+2<0$ while $\g+2+\frac{d}{q'} >0$ since {$\frac1q < 1 + \frac{\g+2}d$}), reads simply
 $$|I_{\g+2}[h](v)| \leq {C}\, \|h\|_{q}^{-\frac{\g+2}{d}q'}\,\|h\|_{1}^{\frac{(\g+2)q'+d}{d}}.$$
 Since (for some $C$ depends on $d,m,q$) $\|h\|_{1} \leq C \,\|h\langle \cdot \rangle^{m}\|_{q}$ for any $m >\frac{d}{q'}$, this yields \eqref{eq:AF0}. 
\medskip

Now, the same exact estimate holds for $|\bm{b}[h]| \leq (d-1)I_{\g+1}[h]$ under the assumption  {$q > \frac{d}{d+\g+1}$} and this proves \eqref{eq:BF0}.      
\medskip

Let us now
prove \eqref{eq:BF1}, recalling that {$\frac{d}{d+\g+2} <q < \frac{d}{d+\g+1}$}.  According to Hardy-Littlewood-Sobolev inequality (Proposition \ref{prop:HLS}), for any $q < \frac{d}{d+\g+1}$, a simple duality argument shows that
\begin{equation}\label{eq:Ig+1}
\|\,I_{\g+1}[h]\|_{r'} \leq C_{\rm HLS}\|h\|_{q}, \qquad \frac{1}{r'}=1-\frac{1}{r}=\frac{1}{q}-\frac{1+\g}{d}-1=\frac{d-q(1+\g+d)}{qd}.\end{equation}
Notice that $r' > 1 \iff q > \frac{d}{1+\g+2d}$ which is satisfied since $\frac{d}{d+\g+2} > \frac{d}{1+\g+2d}$. This proves \eqref{eq:BF1}. 

Strangely, the case $q=\frac{d}{1+\g+d}$ is not covered by the above statements and deserves a separate treatment. Indeed, Hardy-Littlewood-Sobolev inequality \eqref{eq:Ig+1} for $q=\frac{d}{d+\g+1}$ would give $r'=\infty$, which is excluded by \eqref{eq:HLS}. We rather apply it with $p < q=\frac{d}{d+\g+1}$ and, as above, deduce
$$\|\,I_{\g+1}[h]\|_{r'} \leq C_{\rm HLS}\|h\|_{p}, \qquad \frac{1}{r'}=\frac{d-p(1+\g+d)}{pd}=\frac{1}{p}-\frac{1}{q}.$$
Since $1  < p  < q$, we use a further interpolation $\|h\|_{p} \leq \|h\|_{1}^{\theta}\,\|h\|_{q}^{1-\theta}$ with $\theta=\frac{q-p}{p(q-1)}$ and, with Young's inequality
$$\|h\|_{p} \leq \|h\|_{1}+\|h\|_{q} \leq C\|\langle \cdot \rangle^{m}h\|_{q}, \qquad m > d\left(1-\frac{1}{q}\right).$$
 {This concludes the proof of the Lemma.}
\end{proof}

  We finally recall a result regarding the $\e$-Poincar\'e inequality in the (critical) case $\g=-2$ in any dimension established in  \cite{ABL}.

\begin{prop}\label{propo:GG2}
 Let $d\in\N, d \geq 3$. 
 Assume   $f_{\rm in}$ be as in  Definition \ref{defi:fin}. Then there exists $C_{0} >0$ depending only on $\|f_{\rm in}\|_{L^{1}_{2}}$ and $H(f_{\rm in})$ such that
for any $\e >0$,  
\begin{multline*}
\int_{\R^{d}}\phi^{2} {\bm{c}_{-2}}[g]\d v \leq \e\int_{\R^{d}}\left|\nabla \left( {\langle v\rangle^{-1}}\phi(v)\right)\right|^{2}\d v 
+C_{0}\exp\left(\e^{-\frac{s}{s-2}}\,\lm_{s}(g)^{\frac{s}{s-2}}\right)\int_{\R^{d}}\phi^{2} {\langle v\rangle^{-2}}\d v\,
\end{multline*}
holds true for any suitable nonnegative functions $g \in \mathcal{Y}_{0}(f_{\mathrm{in}}) \cap L^{1}_{s}(\R^{d})$ $(s >2)$ and  $\phi$.
\end{prop}  
\begin{proof} We refer to \cite{ABL} for a full proof of the result in general dimension $d \geq 3$ which uses Lorentz spaces. We just explain the main steps of it, referring to \cite{ABL} for details and to \cite{chemin} for definition and properties of Lorentz spaces. 
 As in the proof of Proposition \ref{prop:GG}, eq. \eqref{eq:ineq},  we decompose, according to 
$|v-v_*| \ge \frac{\langle v\rangle}2$ or not, 
$$\int_{\R^{d}\times \R^{d}}|v-\vet|^{-2}g(\vet)\phi^{2}(v)\d v=I_{1}+I_{2},$$
with $I_{1} \leq  \|g\|_{{1}} \|\psi^{2}\|_{{1}}\,,$ and $I_{2} \leq  C_{d,\g}\, \int_{\R^{d}\times \R^{d}}|v-\vet|^{-2}\psi(v)^{2}F(\vet)\d \vet\d v\,,$ still using the notations
$ F(v)=\langle v\rangle^{2}g(v)$, $\psi(v)=\langle v\rangle^{-1}\phi(v)$.
Then $I_2 \le J_{1}+J_{2}$, with
$$J_{1}:=\int_{|v-\vet|>1}|v-\vet|^{-2}F(\vet)\psi^{2}(v)\d v\d\vet \leq \|F\|_{L^{1}}\|\psi\|_{2}^{2}\,,$$
and
$$J_{2}:=\int_{|v-\vet|\leq 1}|v-\vet|^{-2}F(\vet)\psi^{2}(v)\d v\d\vet\, .$$
The term $J_2$ is then further split, for $R >0$, according to 
$$F=F^{+}_{R}+F^{-}_{R}, \qquad F^{+}_{R}=F\ind_{F >R}, \qquad F^{-}_{R}=F\ind_{F \leq R}\,,$$
and 
$$ J_{2,R}^{\pm} := \int_{|v-v_*|\le 1} |v-v_*|^{-2}\, F^{\pm}_{R}(v_*)\, \psi^{2}(v)\d v\d\vet . $$
Then, $J_{2,R}^{-} \le C_d\, R\, \|\psi\|_{2}^{2}$, while, thanks to the use of Lorentz spaces, 
$$  J_{2,R}^{+} \leq C_{d}\|F_R^+\|_{{1}}\,\|\psi\|_{L^{\frac{2d}{d-2},2}}^{2} \le  C_{d}\|F_R^+\|_{{1}}\, \|\nabla \psi\|_{2}^{2}. $$
Then,
we end up with 
\begin{equation*}
\int_{\R^{d}}\phi^{2}\bm{c}_{-2}[g]\d v \leq  C_{d,\g} \, \bigg(  \|F\|_{1}\|\psi\|_{2}^{2}+ R\|\psi\|_{2}^{2}+ \|F^{+}_{R}\|_{1}\,\|\nabla \psi\|_{2}^{2} \bigg), \qquad \forall R >0\,.
\end{equation*} 
In order to conclude the proposition, one needs to
 prove that $\|F_{R}^{+}\|_{1}$ is small as $R$ is sufficiently large. This comes from the observation that  
for, $s>2$ and $\theta_{s}=1-\frac{2}{s}$ and any suitable function $g \in L^{1}_{s} \cap L\log L(\R^{d})$, 
$$\| \langle\cdot\rangle^{2} g\ind_{f >R} \|_{1}\leq \| \langle\cdot\rangle^{s} g\ind_{g >R} \|^{1-\theta_s}_{1}\| g\ind_{g >R} \|^{\theta_s}_{1}\leq \| \langle\cdot\rangle^{s} g \|^{1-\theta_s}_{1}\bigg(\frac{\|g\log g \|_{1}}{\log(R)}\bigg)^{\theta_s}\,.$$
We refer the reader  to \cite{ABL} for more details.
\end{proof}

\section{Evolution of weighted $L^{p}$-norms}\label{sec:weight}

We briefly explain here how the analysis of Sections \ref{sec:Coulomb} and \ref{sec:prodi} can be modified to allow the appearance of \emph{weighted} $L^{p}$-norms. We recall the notations
$$
\lM_{k,p}(t) :=\int_{\R^{d}}f(t,v)^{p}\langle v\rangle^{k}\d v, \qquad \lD_{k,p}(t) :=\int_{\R^{d}}\left|\nabla \left(\langle v\rangle^{\frac{k}{2}}f^{\frac{p}{2}}(t,v)\right)\right|^{2}\d v\,.
$$
for $k\in \R$ and $p \in (1,\infty)$. Let us dig directly into the technical lemma
\begin{lem}\label{prop:Lp-esti}
Consider $d\in \N$, $d \ge 3$ and $\g \in [-d, 0)$, and assume that $f_{\rm in}$ and $f=f(t,v)$ define a  solution to Eq. \eqref{eq:Landau} in the sense of Definition \ref{defi:weak}.  Then for all $k\in \R_+$ and $p \in (1,\infty)$,
\begin{multline}\label{eq:dtMsp}
\dfrac{\d}{\d t}\lM_{k,p}(t) + 2K(p)\lD_{k+\g,p}(t) \leq K(p)(k+\g)^{2}\lM_{k+\g,p}(t)\\
+C_{k,\g,p}\sum_{i=0}^{2}\int_{\R^{d}}\langle v\rangle^{k-i} \bm{c}_{\g+i}[f(t)](v)\,f^p(t,v) \d v ,
\end{multline}
with ($K_0$ being the constant appearing in Remark \ref{diffusion})
$$K(p) :=\frac{p-1}{p}K_{0}, \qquad C_{k,\g,p} :=\max\left(p-1,\frac{2k}{\g+1+d},\frac{d^{2}k^{2}}{(d-1)(d+\g+2)}\right).$$
\end{lem}

\begin{proof} We easily check that, for any $k\geq 0$,
 \begin{multline*}
\frac{1}{p}\frac{\d}{\d t} \int_{\R^{d}} f^p(t,v) \langle v\rangle^{k} \d v =\int_{\R^{d}}\langle v\rangle^{k}f^{p-1}(t,v)\nabla \cdot \left(\mathcal{A}[f]\nabla f-\bm{b}[f] f\right)\d v\\
=-(p-1)\int_{\R^{d}}\langle v\rangle^{k}f^{p-2}\mathcal{A}[f]\nabla f\cdot   \nabla f\d v + (p-1)\int_{\R^{d}}\langle v\rangle^{k}f^{p-1} \bm{b}[f]\cdot \nabla f\d v\\
-k\int_{\R^{d}}\langle v\rangle^{k-2}\,f^{p-1}\left(\mathcal{A}[f]\nabla f\right)\cdot v\d v +k \int_{\R^{d}}\langle v\rangle^{k-2}f^{p} \bm{b}[f]\cdot v\,\d v\,. 
\end{multline*}
Arguing as   in the proof of Prop. \ref{prop:estMs-g}, we obtain that
$$(p-1)\int_{\R^{d}}\langle v\rangle^{k}f^{p-2}\mathcal{A}[f]\nabla f\cdot   \nabla f\d v 
 \geq \frac{4K_{0}(p-1)}{p^{2}} \int_{\R^{d}}\langle v\rangle^{k+\g}\,\left|\grad (f^{\frac{p}{2}})\right|^{2}\d v\,.$$
Moreover, writing
\begin{equation*}
\nabla \left(\langle v\rangle^{\frac{k+\g}{2}}\,f^{\frac{p}{2}}\right)=\langle v\rangle^{\frac{k+\g}{2}}\nabla (f^{\frac{p}{2}}) + \frac{k+\g}{2}v\,\langle v\rangle^{\frac{k+\g}{2}-2}f^{\frac{p}{2}}\,,
\end{equation*}
from which
\begin{equation}\label{eq:Gradient}
\langle v\rangle^{k+\g}\left|\nabla (f^{\frac{p}{2}})\right|^{2} \geq \frac{1}{2}\left|\nabla \left(\langle v\rangle^{\frac{k+\g}{2}}f^{\frac{p}{2}}\right)\right|^{2} - \frac{1}{4}(k+\g)^{2} \langle v\rangle^{k+\g-2}f^{p}, \end{equation}
we observe that
\begin{multline*}
(p-1)\int_{\R^{d}}\langle v\rangle^{k}f^{p-2}\mathcal{A}[f]\nabla f\cdot   \nabla f\d v 
 \geq \frac{2K_{0}(p-1)}{p^{2}} \int_{\R^{d}}\left|\nabla \left(\langle v\rangle^{\frac{k+\g}{2}}f^{\frac{p}{2}}\right)\right|^{2}\d v\\
 -\frac{K_{0}(p-1)(k+\g)^{2}}{p^{2}}\int_{\R^{d}} \langle v\rangle^{k+\g-2}f^{p}\d v\,.\end{multline*}
Also, note that
\begin{multline*}
 \int_{\R^{d}}\langle v\rangle^{k} f^{p-1}  \bm{b}[f] \cdot \grad f  \d v
 =  -\frac{1}{p} \int_{\R^{d}}  f^p \grad \cdot \Big(\bm{b}[f] \langle v\rangle^{k}\Big) \d v\\
=-\frac{k}{p}\int_{\R^{d}}\langle v\rangle^{k-2} f^p  \bm{b}[f]\cdot v\,\d v + \frac{1}{p}\int_{\R^{d}}\langle v\rangle^{k} f^p \bm{c}_{\g}[f]\, \d v.
\end{multline*}
Therefore, 
\begin{multline*}
 \dfrac{\d}{\d t}\lM_{k,p}(t) + \frac{2K_{0}(p-1)}{p} \lD_{k+\g,p}(t) \\
 \leq   {(p-1)} \int_{\R^{d}}\langle v\rangle^{k} f^p\,\bm{c}_{\g}[f]\d v
+k \int_{\R^{d}}\langle v\rangle^{k-2} f^p\,\left(\bm{b}[f]\cdot v\right)\d v  \\
+ \frac{K_{0}(p-1)(k+\g)^{2}}{p}\int_{\R^{d}} \langle v\rangle^{k+\g-2}f^{p}\d v
-kp\int_{\R^{d}}\langle v\rangle^{k-2}f^{p-1} \left(\mathcal{A}[f]\nabla f\cdot v \right)\d v.
\end{multline*}
Let us investigate in more detail the latter term. Integration by parts shows that \begin{align*}
-kp\int_{\R^{d}}\langle v\rangle^{k-2}f^{p-1} \left(\mathcal{A}[f]\nabla f\right) \cdot v \d v &= -k\int_{\R^{d}}\nabla    (f^{p}) \cdot \Big(\mathcal{A}[f]
\, \langle v\rangle^{k-2}v \Big) \,\d v\\
&= k\int_{\R^{d}} f^{p} \; \nabla\cdot \Big(\mathcal{A}[f]\,\langle v\rangle^{k-2} v \Big) \,\d v\,.
\end{align*}
Using the product rule 
$$\nabla\cdot \Big(\mathcal{A}[f]\,\langle v\rangle^{k-2} v \Big)=\langle v\rangle^{k-2}\,{\bm{b}}[f]\cdot v\, + \mathrm{Trace}\left(\mathcal{A}[f] \cdot \,
D_{v}\left(\langle v\rangle^{k-2} v\right)\right), $$ 
where $D_{v}\big( \langle v\rangle^{k-2} v\big)$ is the matrix with entries $\partial_{v_{i}}\big( \langle v\rangle^{k-2} v_{j}\big)$, $i,j=1,\ldots,d$, or more compactly, 
$$D_{v}\big( \langle v\rangle^{k-2} v\big)=\langle v\rangle^{k-4}\bm{A}(v)\,,$$
where $\bm{A}(v)=\langle v\rangle^{2}\mathbf{Id}+(k-2)\,v\otimes v$, $v \in \R^{d}$,
we obtain
\begin{multline}\label{eq:dtMs}
 \dfrac{\d}{\d t}\lM_{k,p}(t) + \frac{2K_{0}(p-1)}{p}\lD_{k+\g,p}(t) \leq  { {(p-1)}\int_{\R^{d}}\langle v\rangle^{k} \bm{c}_{\g}[f](v)\,f^p \d v }\\
+2k \int_{\R^{d}}\langle v\rangle^{k-2}\,f^p \left(\bm{b}[f]\cdot v\right)\d v  +  \frac{K_{0}(p-1)(k+\g)^{2}}{p}\int_{\R^{d}}\langle v\rangle^{k+\g-2}f^{p}\d v
\\
+k\int_{\R^{d}}\langle v\rangle^{k-4}f^{p}\,\, \mathrm{Trace}\left(\mathcal{A}[f]\cdot \bm{A}(v)\right)\d v\,.
\end{multline}
We denote by $I_{1},I_{2},I_{3},I_{4}$ the various terms on the right-hand-side of \eqref{eq:dtMs}
and control each of them separately.  We get
\begin{multline}\label{eq:I2B}
|I_{2}| \leq 2k\int_{\R^{d}}\langle v\rangle^{k-1}f^{p}(t,v)\,|\bm{b}[f(t)](v)|\d v \\
\leq 2k(d-1)\,\int_{\R^{2d}}\langle v\rangle^{k-1}f^{p}(t,v)|v-\vet|^{\g+1}f(t,\vet)\d \vet\d v\, ,
\end{multline}
that is,
$$
|I_{2}| \leq \frac{2k}{\g+1+d}\int_{\R^{d}}\langle v\rangle^{k-1}f^{p}(t,v)\bm{c}_{\g+1}[f(t)](v)\d v.
$$
Clearly, we also get
$$|I_{3}| \leq  \frac{K_{0}(p-1)(k+\g)^{2}}{p}\lM_{k+\g,p}(t).$$
For the term $I_{4}$, one checks easily that, for any $i,j \in \{1,\ldots,d\}$,
$$\left|\mathcal{A}_{i,j}[f]\right| \leq |\cdot|^{\g+2}\ast f, \qquad \left|\bm{A}_{i,j}(v)\right| \leq k\langle v\rangle^{2}, $$
and 
$$|I_{4}| \leq d^{2}k^{2}\int_{\R^{2d}}\langle v\rangle^{k-2}f^{p}(t,v)|v-\vet|^{\g+2}f(t,\vet)\d v\d\vet.$$
Therefore, 
$$|I_{4}| \leq \frac{d^{2}k^{2}}{(d-1)(d+\g+2)}\int_{\R^{d}}\langle v\rangle^{k-2}f^{p}(t,v)\bm{c}_{\g+2}[f](v)\d v,$$
which proves the result thanks to \eqref{eq:dtMs}.
\end{proof}
We now estimate the various terms 
$$
\int_{\R^{d}}\langle v\rangle^{k-i}\bm{c}_{\g+i}[f(t)](v)f^{p}(t,v)\d v, \qquad i=0,1,2\,,
$$
and observe that the less favourable estimate corresponds to the most singular case $i=0$.

\begin{lem}\label{lem:compa} Consider $d\in\N$. For any $-d < \g <0$,  $p \in (1,\infty)$, $k\in \R_+$, and suitable functions $f\ge 0$ and $h$,
the following estimate holds:
\begin{multline*}
  \sum_{i=0}^{2}\int_{\R^{d}}\langle v\rangle^{k-i}\bm{c}_{\g+i}[f](v)\,|h(v)|^{p}\d v  \leq C_{d,\g}\int_{\R^{d}}\langle v\rangle^{k}\overline{\bm{c}}_{\g}[f](v)|h(v)|^{p}\d v+c_{d,\g}\int_{\R^{d}} \langle v\rangle^{k} |h(v)|^{p}\d v ,
\end{multline*}
where  $C_{d,\g}:=3\frac{\g+d+2}{\g+d}$, $c_{d,\g}:=3(d-1)\,(\g+d+2)\, \|f\|_{L^1_2 }$, and
 where
$$\overline{\bm{c}}_{\g}[f] :=(d-1)(d+\g)\int_{|v-\vet|\leq1}|v-\vet|^{\g}f(\vet)\d\vet.$$
In the Coulomb case $\g=-d$, the estimate becomes:
\begin{multline}\label{eq:lemcompar}
\sum_{i=0}^{2}\int_{\R^{d}}\langle v\rangle^{k-i}\bm{c}_{\g+i}[f](v)\,|h(v)|^{p}\d v \leq c_{d}\int_{\R^{d}} \langle v\rangle^{k}\,f(v) \,|h(v)|^{p}\d v\\
+\tilde{C}_{d} \int_{\R^{d}}\langle v\rangle^{k-1}\overline{\bm{c}}_{1-d}[f](v)\,|h(v)|^{p}\d v+   \tilde{c}_{d} \int_{\R^{d}} \langle v\rangle^{k-1} |h(v)|^{p}\d v ,
\end{multline}
for {$\tilde{C}_{d} := 3(d-1)$, $\tilde{c}_{d} := 3(d-1)\,\|f_{\rm in}\|_{1}$. }
\end{lem}

\begin{proof}  For $i=0,1,2$, one simply writes
$$\bm{c}_{\g+i}[f](v)=(d-1) \,(\g+d+i)\int_{\R^{d}}f(\vet)|v-\vet|^{\g+i}\d\vet , $$
and splits the above integral according to $|v-\vet| >1$ and $|v-\vet| \leq 1$. Clearly,
\begin{multline*}
\int_{|v-\vet| \leq 1}f(\vet)|v-\vet|^{\g+i}\d\vet \leq \int_{|v-\vet| \leq 1}f(\vet)|v-\vet|^{\g}\d\vet
\leq \frac{1}{(d-1)(\g+d)}\overline{\bm{c}}_{\g}[f]\,.
\end{multline*}
Whereas, given that $\g <0$,
\begin{multline*} 
\int_{|v-\vet| > 1}f(\vet)|v-\vet|^{\g+i}\d\vet  \leq \int_{|v-\vet|>1}f(\vet)|v-\vet|^{i}\d\vet
\leq \langle v\rangle^{i}\int_{\R^{d}}f(\vet)\langle \vet\rangle^{i}\d\vet ,
\end{multline*}
since $|v-\vet| \leq \langle v\rangle\langle \vet\rangle$.  Recalling that $i=0,1,2$, 
$$
\int_{\R^{d}}f(\vet)\langle\vet\rangle^{i}\d\vet \leq \int_{\R^{d}}f(\vet)\langle\vet\rangle^{2}\d\vet = \|f\|_{L^1_2} .
$$
Thus, 
$$\bm{c}_{\g+i}[f] \leq \frac{\g+d+i}{\g+d}\,\overline{\bm{c}}_{\g}[f] + (d-1) \,(\g+d+i)\,\langle v\rangle^{i} \, \|f\|_{L^1_2 }, $$
and the result follows by defining $C_{d,\g}$ and $c_{d,\g}$ as in the statement.

In the case $\g=-d$, the proof is almost identical and one simply needs to  estimate the sum 
$$\sum_{i=1}^{2} \int_{\R^{d}} \langle v\rangle^{k-i}\bm{c}_{i-d}[f](v) \,|h(v)|^{p}\d v.$$
Then we observe that for $i=1,2$, 
{$$ \int_{|v-\vet| \leq 1}f(\vet)|v-\vet|^{i-d}\d\vet \leq  \overline{\bm{c}}_{1-d}[f], \qquad \text{ 
while } \int_{|v-\vet| \geq 1}f(\vet)|v-\vet|^{i-d}\d\vet \leq  \|f\|_{1}.$$
Therefore, $\bm{c}_{i-d}[f] \le (d-1)\,i\,  \bigg(   \overline{\bm{c}}_{1-d}[f] + \|f\|_{1} \bigg),$
which gives \eqref{eq:lemcompar}. }
 \end{proof}

 \bibliographystyle{plainnat-linked}
		
\end{document}